\newenvironment{nouppercase}{%
  \renewcommand{\uppercasenonmath}[1]{}}{}
\theoremstyle{definition}
\newtheorem{theorem}[equation]{Theorem}
\newtheorem{prop}[equation]{Proposition} 
\newtheorem*{conj*}{Conjecture}
\newtheorem*{theorem*}{Theorem}
\newtheorem{rmk}[equation]{Remark} 
\newtheorem{cor}[equation]{Corollary}
\newtheorem{lemma}[equation]{Lemma}
\newtheorem{exam}[equation]{Example}
\def\zetas{\zeta^{\star}}
\def\jt{{\rm JT}}
\def\jte{{\rm JTE}}
\def\jts{{\rm JT}^\star}
\def\jtse{{\rm JTE}^\star}
\begin{document}

\title[Pieri formulas for hook type]{The Pieri formulas for hook type Schur multiple zeta functions}
\author[M. Nakasuji]{Maki Nakasuji}
\address{Department of Information and Communication Science, Faculty of Science, Sophia
University, 7-1 Kioi-cho, Chiyoda-ku, Tokyo, 102-8554, Japan}
\email{ nakasuji@sophia.ac.jp}
\author[W. Takeda]{Wataru Takeda}
\address{Department of Applied Mathematics, Tokyo University of Science,
1-3 Kagurazaka, Shinjuku-ku, Tokyo 162-8601, Japan.}
\email{w.takeda@rs.tus.ac.jp}
\subjclass[2020]{11M41, 05E05}
\keywords{Pieri formula, Jacobi-Trudi formula, Schur multiple zeta function}

\begin{abstract}
We study the Pieri type formulas for the Schur multiple zeta functions along with those for the Schur polynomials. To formulate these formulas, we introduce a new insertion rule for adding boxes in the Young tableaux and obtain the results for the hook type Schur multiple zeta functions. For the proof, we show {certain} extended Jacobi-Trudi formulas for the Schur multiple zeta functions. 
\end{abstract}

\begin{nouppercase}
\maketitle
\end{nouppercase}

\section{Introduction}
The {\it Schur multiple zeta functions} introduced by Nakasuji-Phuksuwan-Yamasaki \cite{npy} as a generalization of both multiple zeta and multiple zeta-star functions of Euler-Zagier type.
These are defined as sum like usual Schur polynomials of multiple zeta functions.
More precisely,
for any partition $\lambda$, i.e. a non-decreasing sequence $( \lambda_1, \ldots, \lambda_n)$ of { positive} integers,
we associate Young diagram
$D(\lambda)=\{(i, j)\in {\mathbb Z}^2 ~|~ 1\leq i\leq n, 1\leq j\leq \lambda_i\}$ depicted as a collection of square boxes with the $i$-th row has $\lambda_i$ boxes, as with matrices.
For a partition $\lambda$, a Young tableau $T=(t_{ij})$ of shape $\lambda$ over a set $X$ is a filling of $D(\lambda)$ obtained by putting $t_{ij}\in X$ into $(i,j)$ box of $D(\lambda)$.
 We denote by $T(\lambda,X)$ the set of all Young tableaux of shape $\lambda$ over $X$ and denote by $SSYT(\lambda)$ the set of semi-standard Young tableaux {$(t_{ij})\in T(\lambda,\mathbb N)$ which satisfies} weakly increasing from left to right in each row $i$,
and strictly increasing from top to bottom in each column $j$.
Then for a given set ${ \pmb s}=(s_{ij})\in T(\lambda,\mathbb{C})$ of variables, 
{\it Schur multiple zeta functions} of {shape} $\lambda$ are defined to be
\[
\zeta_{\lambda}({ \pmb s})=\sum_{M\in SSYT(\lambda)}\frac{1}{M^{ \pmb s}},
\]
where $M^{ \pmb s}=\displaystyle{\prod_{(i, j)\in D(\lambda)}m_{ij}^{s_{ij}}}$ for $M=(m_{ij})\in SSYT(\lambda)$. 
The function $\zeta_\lambda(\pmb s)$ absolutely converges in \[
 W_{\lambda}
=
\left\{{\pmb s}=(s_{ij})\in T(\lambda,\mathbb{C})\,\left|\,
\begin{array}{l}
 \text{$\Re(s_{ij})\ge 1$ for all $(i,j)\in D(\lambda) \setminus C(\lambda)$} \\[3pt]
 \text{$\Re(s_{ij})>1$ for all $(i,j)\in C(\lambda)$}
\end{array}
\right.
\right\}
\]
with $C(\lambda)$ being the set of all corners of $\lambda$. In this article, we assume that all variables of $\zeta_\lambda$ are elements of $W_\lambda$. We sometimes write $\pmb s$ as $\zeta_{\lambda}({ \pmb s})$ for short if there is no confusion.
We can regard these functions as generalizations of both multiple zeta functions $\zeta(s_1,\ldots,s_r)$ and multiple zeta star functions $\zetas(s_1,\ldots,s_r)$ defind by
\[
 \zeta(s_1,\ldots,s_r)=\sum_{1\le n_1<\cdots< n_r}\frac{1}{n_1^{s_1},\ldots,n_r^{s_r}},\
     \zetas(s_1,\ldots,s_r)=\sum_{1\le n_1\le\cdots\le n_r}\frac{1}{n_1^{s_1},\ldots,n_r^{s_r}},
 \]
since
\begin{equation}
    \label{zetazetas}
\ytableausetup{boxsize=normal,aligntableaux=center}
 \zeta_{(\{1\}^r)}\left(\begin{ytableau}
  s_1\\
  \vdots\\
  s_r
\end{ytableau}\right)=\zeta(s_1,\ldots,s_r),\ 
     \zeta_{(r)}\left(\begin{ytableau}
  s_1&\cdots&s_r
\end{ytableau}\right)=\zetas(s_1,\ldots,s_r).
\end{equation}

{Since the Schur multiple zeta functions have the structure of the Schur polynomials, they} are expected to have similar properties to those for Schur polynomials. Indeed, in \cite{npy}, Nakasuji-Phuksuwan-Yamasaki showed we can obtain some determinant formulas similar to those for the Schur polynomials under some assumptions, such as the Jacobi-Trudi \cite{ma}, Giambelli \cite{g} and dual Cauchy formulas \cite{n}.
{Bachmann-Charlton \cite{bc} defined so-called diagonal 10th variation Schur functions, and generalized the Jacobi-Trudi formulas for the skew type Schur multiple zeta functions shown in \cite{npy} and answered questions about the Checkerboard style Schur multiple zeta values posed in \cite{by}.}
On the other hand, it remains the questions whether these explicit formulas without assumptions and other well-known formulas for Schur polynomials can be obtained.

Among these questions, in this article we will discuss the {\it Pieri formulas} for Schur multiple zeta functions.
The well-known original Pieri formulas for Schur polynomials are the same as formulas found by Pieri for multiplying Schubert varieties in the intersection (cohomology) ring of a Grassmannian, and
these are also known as the special cases of Littlewood-Richardson rule (see, \cite{fu} or \cite{ma}).
Let us explain it breafly.
Let $s_{\lambda}(x_1, \cdots, x_m)$ be a Schur polynomial for certain semi-standard Young tableaux of shape $\lambda$.
{For the Young diagrams of shapes $(r)$ and $(\{1\}^r)$ with one row and one column of length $r\in \mathbb N$, then the Pieri formulas} are
 \[
      s_{\lambda}(x_1, \ldots, x_m)\cdot s_{(r)}(x_1, \ldots, x_m)=\sum_{\mu} s_{\mu}(x_1, \ldots, x_m),
 \]
 where the sum is taken over all $\mu$'s that are obtained from $\lambda$ by adding $r$ boxes, with no two in the same column, and
  \[
      s_{\lambda}(x_1, \ldots, x_m)\cdot s_{(\{1\}^r)}(x_1, \ldots, x_m) = \sum_{\mu} s_{\mu}(x_1, \ldots, x_m),
 \]
 where the sum is taken over all $\mu$'s that are obtained from $\lambda$ by adding $r$ boxes, with no two in the same row.
 
Considering those formulas for Schur multiple zeta functions, we first have to fix the positions for adding $r$ boxes.
It may be natural to put those boxes to the most-right boxes in each row or  down to the most-bottom box in each column.
But here comes a new problem that which box go where. This is important because it affects the result since the value of the Schur multiple zeta function changes depending on 
the location of the variables.
Toward this problem, we introduce a new insertion method called {\it Pushing rule}.
We will give a detailed explanation of that rule in Section \ref{push}, and  explain it briefly here.
Put the boxes on the top of some columns of $\pmb s$ (or to the left of some rows of $\pmb s$) and push the columns (or rows) from top (or left), respectively.
Following example is for $\lambda=(2,1)$ and $r=1$.
We put 
\ytableausetup{boxsize=normal,aligntableaux=center}
 \begin{ytableau}
  t
\end{ytableau}
to the top of the column of $\pmb s$ and push that row from the top.
{
  \begin{align*}
      &\zeta_{(2,1)}\left(
 \ytableausetup{boxsize=normal}
  \begin{ytableau}
  s_{11} & s_{12} \\
  s_{21}
  \end{ytableau}\right)
\cdot \zeta_{(1)}\left(
 \ytableausetup{boxsize=normal}
  \begin{ytableau}
t  \end{ytableau}
\right)\\
&=\zeta_{(2,1,1)}\left(
 \ytableausetup{boxsize=normal}
  \begin{ytableau}
  t & s_{12} \\
  s_{11}\\
  s_{21}
  \end{ytableau}\right)
  +\zeta_{(2,2)}\left(
 \ytableausetup{boxsize=normal}
  \begin{ytableau}
  s_{11} & t \\
s_{21}&s_{12}
  \end{ytableau}\right)+\zeta_{(3,1)}\left(
 \ytableausetup{boxsize=normal}
  \begin{ytableau}
  s_{11} & s_{12} &t\\
   s_{21}
  \end{ytableau}\right)+({\rm{error\; terms}}).
  \end{align*}}
 As we can see, even this simple example includes the non-zero error terms.
 The problem is how to handle them. We try to avoid this difficulty by taking the summation 
 of the products $\zeta_{\lambda} \cdot \zeta_{(r)}$ or $\zeta_{\lambda} \cdot \zeta_{(\{1\}^r)}$ with swapped variables, 
 and obtain the following 
 
\begin{theorem}
\label{maintheorem}
For any positive integers $\ell,m$ and $k$, we assume that $\Re(y_i)>1\ (1\le i\le \ell)$ and $\Re(z_j)>1\ (1\le j\le \ell-1)$. Then it holds that
\[
\ytableausetup{boxsize=1.5em}
\sum_{sym}\zeta_{({\ell},\{1\}^{k})}\left(\begin{ytableau}
  y_1&\cdots&y_{{\ell}}\\
  x_1\\
  \vdots\\
  x_k
\end{ytableau}\right)\cdot\zeta_{(m)}\left(\begin{ytableau}
  z_1&z_2&\cdots&z_m
\end{ytableau}\right)=\sum_{sym}\sum_{{\pmb u}_{\mu} \in {\pmb U}_H}\zeta_{\mu}({\pmb u}_{\mu}),
\]
where $\displaystyle{\sum_{sym}}$ means the summation over the permutation of $S=\{y_{1},\ldots,y_{{\ell}},z_1,\ldots,z_{{\ell}-1}\}$ as indeterminates and the inner sum in the right-hand side is taken all the terms ${\pmb u}_{\mu}\in {\pmb U}_H$ obtained by the pushing rule and $\mu$ is the shape of ${\pmb u}_{\mu}$.
Also, we assume that $\Re(x_i)>1\ (1\le i\le k)$ and $\Re(z_j)>1\ (1\le j\le k-1)$ then we have
\[
\ytableausetup{boxsize=1.5em}
\sum_{sym}\zeta_{(\ell+1,\{1\}^{k-1})}\left(\begin{ytableau}
  x_1&y_1&\cdots&y_{{\ell}}\\
  x_2\\
  \vdots\\
  x_k
\end{ytableau}\right)\cdot\zeta_{(\{1\}^{m})}\left(\begin{ytableau}
  z_1\\
  z_2\\
  \vdots\\
  z_m
\end{ytableau}\right)=\sum_{sym}\sum_{{\pmb u}_{\mu}\in {\pmb U}_E}\zeta_{\mu}({\pmb u}_{\mu}),
\]
where $\displaystyle{\sum_{sym}}$ are taken all over the permutation of $\{x_{1},\ldots,x_{k},z_1,\ldots,z_{k-1}\}$ as indeterminates and the inner sum in the right-hand side is taken all the terms ${\pmb u}_{\mu}\in {\pmb U}_E$ obtained by the pushing rule.
\end{theorem}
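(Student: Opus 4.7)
The plan is to deduce both identities from the extended Jacobi-Trudi formulas for hook-shape Schur multiple zeta functions that are developed earlier in the paper, combined with the stuffle and shuffle product identities for the ordinary multiple zeta and zeta-star values. The relevant observation is that $\zeta_{(m)}(z_1,\ldots,z_m)=\zetas(z_1,\ldots,z_m)$ and $\zeta_{(\{1\}^m)}(z_1,\ldots,z_m)=\zeta(z_1,\ldots,z_m)$ by \eqref{zetazetas}, so once the hook on the left is broken into a signed combination of row- and column-zetas, the product becomes a linear combination of ordinary (star) multiple zeta products that can be expanded by stuffle or shuffle.

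For the first identity I would first expand $\zeta_{(\ell,\{1\}^k)}(\pmb s)$ as a signed combination of products $\zetas(\cdots)\cdot\zeta(\cdots)$ via the extended hook Jacobi-Trudi formula. Multiplying by $\zetas(z_1,\ldots,z_m)$ and invoking the stuffle product on each pair $\zetas(y_{i_1},\ldots,y_{i_p})\cdot\zetas(z_1,\ldots,z_m)$ converts the left-hand side into a signed sum indexed by interleavings (with possible coincidences producing $y+z$-type arguments) of the $y_i$ and $z_j$. Next I would take the symmetric sum over permutations of $S=\{y_1,\ldots,y_\ell,z_1,\ldots,z_{\ell-1}\}$; after this symmetrization the Jacobi-Trudi expansion collapses in reverse so that each interleaving packet is recognized as a hook-type $\zeta_\mu(\pmb u_\mu)$ whose index is precisely a pushing rule output, giving the right-hand sum $\sum_{\pmb u_\mu\in\pmb U_H}\zeta_\mu(\pmb u_\mu)$. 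Absolute convergence throughout is guaranteed by the hypotheses $\Re(y_i)>1$ and $\Re(z_j)>1$, which keep every intermediate multiple zeta value admissible.

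The second identity is proved in an entirely parallel manner, with the roles of rows and columns interchanged: one uses the extended Jacobi-Trudi expansion of $\zeta_{(\ell+1,\{1\}^{k-1})}$, multiplies by $\zeta(z_1,\ldots,z_m)$, applies the stuffle identity to each pair $\zeta(\cdots)\cdot\zeta(\cdots)$ coming from the column contributions of the hook, and symmetrizes over $\{x_1,\ldots,x_k,z_1,\ldots,z_{k-1}\}$.

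The main obstacle will be the combinatorial matching step: one must verify that the interleaving terms produced by the (s)tuffle expansion, once the symmetric sum is taken and the Jacobi-Trudi formula is read in reverse, are in exact bijection with the index set $\pmb U_H$ (resp. $\pmb U_E$) dictated by the pushing rule, and that the \emph{error terms} already visible in the $(2,1)$-example of the introduction are precisely those contributions that recombine under symmetrization into genuine pushing rule outputs. Tracking the sign cancellations coming from Jacobi-Trudi against the multiplicities introduced by permutations of $S$ is where the combinatorial heart of the proof lies; once this identification is made, the theorem follows by reading both sides of the extended Jacobi-Trudi formula after symmetrization.
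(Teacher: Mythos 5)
Your proposal correctly identifies the two load-bearing tools --- the extended Jacobi--Trudi formulas and the symmetrization $\sum_{sym}$ that kills the Jacobi--Trudi error terms (Lemma \ref{diag}) --- but the route you take through the stuffle product is a wrong turn, and the step where the argument would actually close is left as an acknowledged ``obstacle'' rather than proved. Concretely: if you expand $\zetas(w_1,\ldots,w_p)\cdot\zetas(z_1,\ldots,z_m)$ by the stuffle (harmonic) product, you necessarily produce merged-argument terms of the form $\zetas(\ldots,y_i+z_j,\ldots)$ coming from coinciding summation indices. No tableau produced by the pushing rule carries a variable $y_i+z_j$, so these terms cannot be ``recognized as a hook-type $\zeta_\mu(\pmb u_\mu)$''; they would all have to cancel among themselves, and nothing in your outline shows that they do. Your claimed bijection between ``interleaving packets'' and $\pmb U_H$ is therefore not available as stated, and the sign/multiplicity bookkeeping you defer to the end is exactly the content of the theorem.

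The paper never multiplies out the product. It first reduces to $k=1$ and $\ell=m$ (extra $x_i$'s and $z_j$'s factor out harmlessly), writes the left-hand side as the $2\times2$ Jacobi--Trudi determinant of the hook times the single factor $\zetas(w_{\ell+1},\ldots,w_{2\ell-1},z_\ell)$, and leaves it in that form. On the right-hand side, each pushed tableau $\pmb u_\mu$ has shape $(a,b)$ or $(a,b,1)$, so Theorem \ref{mnx} converts $\sum_{sym}\zeta_\mu(\pmb u_\mu)$ into a sum of explicit $2\times2$ and $3\times3$ determinants of $\zetas$-values; summing over the pushing positions, these determinants telescope and collapse to exactly the left-hand product. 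Thus the identity is verified entirely at the level of determinant expressions in $\zetas$, with $\sum_{sym}$ (which refines $\sum_{diag}$) guaranteeing that the extended Jacobi--Trudi formulas apply without the diagonal-variable hypothesis. To repair your proof you would either have to drop the stuffle expansion and follow this determinant-telescoping route, or else supply the (nontrivial) cancellation of all merged-argument stuffle terms together with the explicit bijection onto $\pmb U_H$ and $\pmb U_E$; as written, neither is done.
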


 Theorem \ref{maintheorem} follows from the following extended Jacobi-Trudi formula, which states some kind of generalization of the Jacobi-Trudi formula for Schur multiple zeta functions (\cite{npy}):
 \begin{theorem}\label{extendJT}
With any integers $m\ge n\ge 1$, for $\Re(s_{1m}),\Re(s_{1(n-1)}),\Re(s_{2n})>1$, it holds that 
\begin{align*}
  \sum_{diag}\zeta_{(m,n)}\left(\begin{ytableau}
  s_{11}&s_{12}&\cdots&\cdots&\cdots&s_{1m}\\
  s_{21}&s_{22}&\cdots&s_{2n}\\
\end{ytableau}\right)
&=\sum_{diag}\begin{vmatrix}
\zetas(s_{11},\ldots,s_{1m})&\zetas(s_{21},\ldots,s_{2n},s_{1n}, \ldots,s_{1m})\\
\zetas(s_{11},\ldots,s_{1(n-1)})&\zetas(s_{21},\ldots,s_{2n})
\end{vmatrix},
\end{align*}
where the sum \[\sum_{diag}=\sum_{\substack{\sigma_j\in S_j\\j\in J}}\prod_{i=1}^{n-1}\sigma_i\]
for $J=\{1,\ldots,n-1\}$ and the sets of permutation $S_j=\{{\rm id},(1j,2(j+1))\}$.
 \end{theorem}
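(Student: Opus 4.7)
My approach is a Lindström-Gessel-Viennot style bijective argument, coupled with the diagonal symmetrization which exactly absorbs the weight mismatch inherent to the non-commutative nature of the zeta-star exponents.

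I would begin by expanding the determinant on the right-hand side as
\[
D(\pmb s):=\zetas(s_{11},\ldots,s_{1m})\zetas(s_{21},\ldots,s_{2n})-\zetas(s_{11},\ldots,s_{1(n-1)})\zetas(s_{21},\ldots,s_{2n},s_{1n},\ldots,s_{1m}),
\]
and then expanding each $\zetas$ as a sum over weakly increasing sequences of positive integers. The first product is indexed by pairs $(\pmb a,\pmb b)$ of weakly increasing sequences of lengths $m$ and $n$; the second by pairs $(\pmb c,\pmb d)$ of lengths $n-1$ and $m+1$. The weights are read off from the corresponding $\zetas$-argument positions. The \emph{column-strict} pairs $(\pmb a,\pmb b)$ (those with $a_j<b_j$ for every $j=1,\ldots,n$) are exactly the SSYT of shape $(m,n)$ and already contribute $\zeta_{(m,n)}(\pmb s)$.

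I would then construct a bijection $\Phi$ between the remaining \emph{bad} pairs (those with some $a_j\ge b_j$) and all pairs $(\pmb c,\pmb d)$: letting $j_0$ be the smallest column with $a_{j_0}\ge b_{j_0}$, set
\[
\pmb c=(a_1,\ldots,a_{j_0-1},b_{j_0+1},\ldots,b_n), \qquad \pmb d=(b_1,\ldots,b_{j_0},a_{j_0},\ldots,a_m).
\]
The inverse reads off $j_0$ as the smallest $k\in\{1,\ldots,n-1\}$ with $c_k\ge d_k$ (taking $j_0=n$ if no such $k$ exists) and reverses the formulas; a routine monotonicity check confirms that $\Phi$ is a genuine bijection.

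The heart of the argument is a weight comparison: a direct bookkeeping shows that the $\pmb s$-weight of a bad pair $(\pmb a,\pmb b)$ equals the $\pmb s^{T_{j_0}}$-weight of $\Phi(\pmb a,\pmb b)$, where $T_{j_0}=\{j_0,j_0+1,\ldots,n-1\}$ encodes precisely the diagonal swaps $s_{1k}\leftrightarrow s_{2,k+1}$ for $k\in T_{j_0}$. Hence
\[
D(\pmb s)=\zeta_{(m,n)}(\pmb s)-\sum_{(\pmb c,\pmb d)}\Bigl(\operatorname{wt}(\pmb c,\pmb d;\pmb s)-\operatorname{wt}(\pmb c,\pmb d;\pmb s^{T_{j_0}})\Bigr),
\]
with $T_{j_0}$ depending on $(\pmb c,\pmb d)$. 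Applying $\sum_{diag}$ and using $(\pmb s^T)^{T_{j_0}}=\pmb s^{T\triangle T_{j_0}}$, the substitution $T\mapsto T\triangle T_{j_0}$ is an involution on $2^{\{1,\ldots,n-1\}}$ for each fixed $T_{j_0}$, so the bracketed error telescopes to zero after summation and we obtain $\sum_{diag}D(\pmb s)=\sum_{diag}\zeta_{(m,n)}(\pmb s)$.

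The hardest part will be the explicit weight bookkeeping: carefully tracking which position of $(\pmb c,\pmb d)$ carries which $s_{ij}$-exponent and confirming that precisely the swaps indexed by $T_{j_0}$ arise, rather than some other set. The edge cases $j_0=1$ and $j_0=n$ require separate verification (the formulas for $\pmb c$ or $\pmb d$ partially degenerate), as does the base case $n=1$, where the symmetrization is trivial and the identity reduces to a one-line computation.
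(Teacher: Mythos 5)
Your argument is correct and is essentially the paper's own proof: the paper derives this theorem from Lemma \ref{diag}, whose proof is exactly your Gessel--Viennot-type tail-swap involution (phrased in terms of intersecting lattice paths rather than violating pairs of monotone sequences), with the same observation that the weight mismatch produced by the swap is a product of diagonal transpositions $s_{1k}\leftrightarrow s_{2(k+1)}$ and hence cancels under $\sum_{diag}$. The only cosmetic differences are that you swap at the leftmost violation while the paper swaps at the rightmost intersection, and that you work directly with the two-row case instead of the paper's general-$\lambda$ lemma.
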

{ In this article, first,} we will study the extended Jacobi-Trudi formula in Section \ref{jacobitrudi} after stating the basic terminology in Section \ref{prelim}. Next, in the Section \ref{pieri}, we will give the proof of Theorem \ref{maintheorem} and examples. In the final section, we will discuss further applications.
We mainly deal with the Pieri formula for the hook type Schur multiple zeta functions {in this article, because}
as we observe in the final section, it is complicated to consider the Pieri formula for non-hook type Schur multiple zeta functions, since they have too many error terms.

\section{Preliminaries}\label{prelim}
We review combinatorial settings of Schur multiple zeta functions of \cite{npy}.
{Especially,} in this section, we consider the truncated Schur multiple zeta functions, that is, we use the following truncated sum:
 For $N\in \mathbb{N}$, let $\mathrm{SSYT}_N(\lambda)$ be the set of all $(m_{ij})\in \mathrm{SSYT}(\lambda)$ such that $m_{ij}\le N$ for all $i,j$.
 Define
\[
 \zeta^{N}_{\lambda}({\pmb s})
=\sum_{M\in\mathrm{SSYT}_N(\lambda)}\frac{1}{M^{{\pmb s}}}.
\]
 Notice that
 $\displaystyle{\lim_{N \to \infty} \zeta^N_{\lambda}({\pmb s})=\zeta_{\lambda}({\pmb s})}$ when ${\pmb s}\in W_\lambda$.
\subsection{Rim decomposition of partition}
 A skew Young diagram $\theta$ is a diagram obtained as a set difference of two Young diagrams of partitions $\lambda$ and $\mu$
 satisfying $\mu\subset \lambda$, that is $\mu_i\le \lambda_i$ for all $i$.
 In this case, we write $\theta=\lambda/\mu$.
 It is called a {\it ribbon} if it is connected and contains no $2\times 2$ block of boxes.
 Let $\lambda$ be a partition.
 The maximal outer ribbon of $\lambda$ is called the {\it rim of} $\lambda$.
 We can peel the diagram $\lambda$ off into successive rims $\theta_t,\theta_{t-1},\ldots,\theta_1$ beginning from the outside of $\lambda$.
 We call $\Theta=(\theta_1,\ldots,\theta_t)$ a {\it rim decomposition of $\lambda$}.
 In other words, we consider a sequence of Young diagrams $\emptyset=\lambda^{(0)},\lambda^{(1)},\ldots,\lambda^{(t)}=\lambda$
 such that $\lambda^{(i-1)}\subset\lambda^{(i)}$ and $\lambda^{(i)}/\lambda^{(i-1)}$ is the ribbon $\theta_i$ for all $1\le i\le t$.
 
\begin{exam}
\label{ex:rim}
 The following $\Theta=(\theta_1,\theta_2,\theta_3,\theta_4)$ is a rim decomposition of $\lambda=(4,3,3,2)$;
\[
\ytableausetup{boxsize=normal,aligntableaux=center}
 \Theta
=\,
\begin{ytableau}
 1 & 1 & 3 & 3 \\
 2 & 3 & 3 \\
 2 & 3 & 4 \\
 3 & 3 
\end{ytableau}
,
\]
 which means that \!\!\!\!\!
\ytableausetup{boxsize=10pt,aligntableaux=center}
 $\theta_1=\ydiagram{2}$\,,
 $\theta_2=\ydiagram{0,1,1}$\,,
 $\theta_3=\ydiagram{2+2,1+2,1+1,2}$\, and
 $\theta_4=\ydiagram{1}$\,.
\end{exam}

 Write $\lambda=(\lambda_1,\ldots,\lambda_r)$.
 We call a rim decomposition $\Theta=(\theta_1,\ldots,\theta_r)$ of $\lambda$ an {\it $H$-rim decomposition} 
 if each $\theta_i$ starts from $(i,1)$ for all $1\le i\le r$.
 Here, we permit $\theta_i=\emptyset$. 
 We denote $\mathrm{Rim}^{\lambda}_H$ by the set of all $H$-rim decompositions of $\lambda$.

\begin{exam}
\label{ex:Hrim}
 The following $\Theta=(\theta_1,\theta_2,\theta_3,\theta_4)$ is an $H$-rim decomposition of $\lambda=(4,3,3,2)$; 
\[
\ytableausetup{boxsize=normal,aligntableaux=center}
 \Theta
=\,
\begin{ytableau}
 1 & 1 & 3 & 3 \\
 3 & 3 & 3 \\
 3 & 4 & 4 \\
 4 & 4 
\end{ytableau}
,
\]
 which means that \!\!\!\!\!
\ytableausetup{boxsize=10pt,aligntableaux=center} 
 $\theta_1=\ydiagram{2}$\,,
 $\theta_2=\emptyset$\,, 
 $\theta_3=\ydiagram{2+2,3,1}$\, and 
 $\theta_4=\ydiagram{1+2,2}$\,.
 Note that the rim decomposition appeared in Example~\ref{ex:rim} is not an $H$-rim decomposition.
\end{exam}
Also, a rim decomposition $\Theta=(\theta_1,\ldots,\theta_s)$ of $\lambda$ is called
 an {\it $E$-rim decomposition} if each $\theta_i$ starts from $(1,i)$ for all $1 \le i \le s$. 
 Here, we again permit $\theta_i=\emptyset$.
 We denote by $\mathrm{Rim}^{\lambda}_E$ the set of all $E$-rim decompositions of $\lambda$.

\subsection{Patterns on the $\mathbb{Z}^2$ lattice}

 Fix $N\in\mathbb{N}$.
 For a partition $\lambda=(\lambda_1,\ldots,\lambda_r)$,
 let $a_i$ and $b_i$ be lattice points in $\mathbb{Z}^2$
 respectively given by $a_i=(r+1-i,1)$ and $b_i=(r+1-i+\lambda_i,N)$ for $1\le i\le r$.
 Put $A=(a_1,\ldots,a_r)$ and $B=(b_1,\ldots,b_r)$.
 An {\it $H$-pattern} corresponding to $\lambda$ is a tuple $L=(l_1,\ldots,l_r)$ of directed paths on $\mathbb{Z}^2$,
 whose directions are allowed only to go one to the right or one up,
 such that $l_i$ starts from $a_i$ and ends to $b_{\sigma(i)}$ for some permutation $\sigma\in \mathfrak S_r$, where $\mathfrak S_r$ is the Symmetric group of $r$ elements.
 We call such $\sigma\in\mathfrak  S_r$ the {\it type} of $L$ and denote it by $\sigma=\mathrm{type}(L)$.
 Note that the type of an $H$-pattern does not depend on $N$.
 Let $\mathcal{H}^{N}_{\lambda}$ be the set of all $H$-patterns corresponding to $\lambda$.
 

We can also consider similar argument for $E$-rim.
 Let $c_i$ and $d_i$ be lattice points in $\mathbb{Z}^2$
 respectively given by $c_i=(s+1-i,1)$ and $d_i=(s+1-i+\lambda'_i,N+1)$ for $1\le i\le s$.
 Put $C=(c_1,\ldots,c_s)$ and $D=(d_1,\ldots,d_s)$.
 An {\it $E$-pattern} corresponding to $\lambda$ is a tuple $L=(l_1,\ldots,l_s)$ of directed paths on $\mathbb{Z}^2$,
 whose directions are allowed only to go one to the northeast or one up,
 such that $l_i$ starts from $c_i$ and ends to $d_{\sigma(i)}$ for some $\sigma\in \mathfrak S_s$.
 We also call such $\sigma\in\mathfrak S_s$ the {\it type} of $L$ and denote it by $\sigma=\mathrm{type}(L)$.
Let $\mathcal{E}^{N}_{\lambda}$ be the set of all $E$-patterns corresponding to $\lambda$.
\subsection{Weight of patterns}

 Fix ${\pmb s}=(s_{ij})\in W_\lambda$.
 We next assign a weight to $L=(l_1,\ldots,l_r) \in \mathcal{H}^{N}_{\lambda}$ via the $H$-rim decomposition of $\lambda$ as follows.
 Take $\Theta=(\theta_1,\ldots,\theta_r)\in\mathrm{Rim}^{\lambda}_{H}$ such that $\tau_H(\Theta)=\mathrm{type}(L)$.
 Then, when the $k$-th horizontal edge of $l_i$ is on the $j$-th row,
 we weight it with $\frac{1}{j^{s_{pq}}}$ where $(p,q)\in D(\lambda)$ is the $k$-th component of $\theta_i$.
 Now, the weight $w^N_{{\pmb s}}(l_i)$ of the path $l_i$ is defined to be the product of weights of all horizontal edges along $l_i$.
 Here, we understand that $w^{N}_{{\pmb s}}(l_i)=1$ if $\theta_i=\emptyset$.
 Moreover, we define the weight $w^N_{{\pmb s}}(L)$ of $L\in \mathcal{H}^{N}_{\lambda}$ by 
\[
 w^N_{{\pmb s}}(L)=\prod^{r}_{i=1}w^{N}_{{\pmb s}}(l_i).
\]

\begin{exam}
 Let $\lambda=(4,3,3,2)$.
 Consider the following $L=(l_1,l_2,l_3,l_4)\in \mathcal{H}^{4}_{(4,3,3,2)}$;
\begin{figure}[ht]
\begin{center}
 \begin{tikzpicture} 
  \node at (0.5,1) {$1$};
  \node at (0.5,2) {$2$};
  \node at (0.5,3) {$3$};
  \node at (0.5,4) {$4$};
     \node at (1,0) {$a_4$};  
     \node at (2,0) {$a_3$};
     \node at (3,0) {$a_2$};
     \node at (4,0) {$a_1$};
     \node at (1,0.5) {$(1,1)$};  
     \node at (2,0.5) {$(2,1)$};
     \node at (3,0.5) {$(3,1)$};
     \node at (4,0.5) {$(4,1)$};
       \node at (3,4.5) {$(3,4)$};
       \node at (5,4.5) {$(5,4)$};
       \node at (6,4.5) {$(6,4)$}; 
       \node at (8,4.5) {$(8,4)$};
       \node at (3,5) {$b_4$};
       \node at (5,5) {$b_3$};
       \node at (6,5) {$b_2$}; 
       \node at (8,5) {$b_1$}; 
   \node at (1,1) {$\bullet$};
   \node at (1,2) {$\bullet$};
   \node at (1,3) {$\bullet$};
   \node at (1,4) {$\bullet$};    
   \node at (2,1) {$\bullet$};
   \node at (2,2) {$\bullet$};
   \node at (2,3) {$\bullet$};  
   \node at (2,4) {$\bullet$};  
   \node at (3,1) {$\bullet$};
   \node at (3,2) {$\bullet$};
   \node at (3,3) {$\bullet$};  
  \node at (3,4) {$\bullet$};  
   \node at (4,1) {$\bullet$};
   \node at (4,2) {$\bullet$};
   \node at (4,3) {$\bullet$};  
   \node at (4,4) {$\bullet$};  
   \node at (5,1) {$\bullet$};
   \node at (5,2) {$\bullet$};
   \node at (5,3) {$\bullet$};  
   \node at (5,4) {$\bullet$};  
   \node at (6,1) {$\bullet$};
   \node at (6,2) {$\bullet$};
   \node at (6,3) {$\bullet$}; 
   \node at (6,4) {$\bullet$};  
   \node at (7,1) {$\bullet$};
   \node at (7,2) {$\bullet$};
   \node at (7,3) {$\bullet$}; 
   \node at (7,4) {$\bullet$};  
   \node at (8,1) {$\bullet$};
   \node at (8,2) {$\bullet$};
   \node at (8,3) {$\bullet$};  
   \node at (8,4) {$\bullet$};  
 \draw (4,1) -- (5,1) -- (5,2) -- (6,2) -- (6,3) -- (6,4);
 \draw[dotted] (3,1) -- (3,2) -- (3,3) -- (3,4);
 \draw[loosely dashdotted] (2,1) -- (2,2) -- (2,3) -- (3,3) -- (4,3) -- (5,3) -- (6,3) -- (7,3) -- (7,4) -- (8,4);      
 \draw[dashed] (1,1) -- (1,2) -- (2,2) -- (3,2) -- (4,2) -- (5,2) -- (5,3) -- (5,4);
 \end{tikzpicture}
\end{center}
\ \\[-10pt]
\caption{$L=(l_1,l_2,l_3,l_4)\in \mathcal{H}^{4}_{(4,3,3,2)}$}
\end{figure}

 Since $\mathrm{type}(L)=(1243)$,
 the corresponding $H$-rim decomposition of $\lambda$ is nothing but the one
 appeared in Example~\ref{ex:Hrim}. \\[-5pt]
 
 Let
$
\ytableausetup{boxsize=18pt,aligntableaux=center}
{\pmb s}=\,
\begin{ytableau}
 a & b & c & d \\
 e & f & g \\
 h & i & j \\
 k & l 
\end{ytableau}
\in T((4,3,3,2),\mathbb{C})
$. 
 Then, the weight of $l_i$ are given by  
\[
 w^{4}_{{\pmb s}}(l_1)
=\frac{1}{1^a2^b}, \quad
 w^{4}_{{\pmb s}}(l_2)
=1, \quad
 w^{4}_{{\pmb s}}(l_3)
=\frac{1}{3^h3^e3^f3^g3^c4^d}, \quad
 w^{4}_{{\pmb s}}(l_4)
=\frac{1}{2^k2^l2^i2^j}.
\]
\end{exam}
{ Similarly, we also define a weight on $L=(l_1,\ldots,l_s) \in \mathcal{E}^{N}_{\lambda}$ via the $E$-rim decomposition of $\lambda$ as follows.
 Take $\Theta=(\theta_1,\ldots,\theta_s)\in\mathrm{Rim}^{\lambda}_{E}$ such that $\tau_E(\Theta)=\mathrm{type}(L)$.
 Then, when the $k$-th northeast edge of $l_i$ lies from the $j$-th row to $(j+1)$-th row,
 we weight it with $\frac{1}{j^{s_{pq}}}$ where $(p,q)\in D(\lambda)$ is the $k$th component of $\theta_i$.
 Now, the weight $w^N_{{\pmb s}}(l_i)$ of the path $l_i$ is defined to be the product of weights of all northeast edges along $l_i$.
 Here, we understand that $w^{N}_{{\pmb s}}(l_i)=1$ if $\theta_i=\emptyset$.
 Moreover, we define the weight $w^N_{{\pmb s}}(L)$ of $L\in \mathcal{E}^{N}_{\lambda}$ by 
\[
 w^N_{{\pmb s}}(L)=\prod^{s}_{i=1}w^{N}_{{\pmb s}}(l_i).
\]

\begin{exam}
 Let $\lambda=(4,3,3,2)$.
 Consider the following $L=(l_1,l_2,l_3,l_4)\in \mathcal{E}^{6}_{(4,3,3,2)}$;
\begin{figure}[h]
\begin{center}
 \begin{tikzpicture} 
  \node at (0.5,1) {$1$};
  \node at (0.5,2) {$2$};
  \node at (0.5,3) {$3$};
  \node at (0.5,4) {$4$};
  \node at (0.5,5) {$5$};
  \node at (0.5,6) {$6$};
  \node at (0.5,7) {$7$};
     \node at (1,0) {$c_4$};  
     \node at (2,0) {$c_3$};
     \node at (3,0) {$c_2$};
     \node at (4,0) {$c_1$};
     \node at (1,0.5) {$(1,1)$};  
     \node at (2,0.5) {$(2,1)$};
     \node at (3,0.5) {$(3,1)$};
     \node at (4,0.5) {$(4,1)$};
       \node at (2,7.5) {$(2,7)$};
       \node at (5,7.5) {$(5,7)$};
       \node at (7,7.5) {$(7,7)$}; 
       \node at (8,7.5) {$(8,7)$};
       \node at (2,8) {$d_4$};
       \node at (5,8) {$d_3$};
       \node at (7,8) {$d_2$}; 
       \node at (8,8) {$d_1$}; 
   \node at (1,1) {$\bullet$};
   \node at (1,2) {$\bullet$};
   \node at (1,3) {$\bullet$};
   \node at (1,4) {$\bullet$};   
   \node at (1,5) {$\bullet$};
   \node at (1,6) {$\bullet$};
   \node at (1,7) {$\bullet$};    
   \node at (2,1) {$\bullet$};
   \node at (2,2) {$\bullet$};
   \node at (2,3) {$\bullet$};  
   \node at (2,4) {$\bullet$};
   \node at (2,5) {$\bullet$};
   \node at (2,6) {$\bullet$};
   \node at (2,7) {$\bullet$};    
   \node at (3,1) {$\bullet$};
   \node at (3,2) {$\bullet$};
   \node at (3,3) {$\bullet$};  
   \node at (3,4) {$\bullet$};
   \node at (3,5) {$\bullet$};
   \node at (3,6) {$\bullet$};
   \node at (3,7) {$\bullet$};    
   \node at (4,1) {$\bullet$};
   \node at (4,2) {$\bullet$};
   \node at (4,3) {$\bullet$};  
   \node at (4,4) {$\bullet$};  
   \node at (4,5) {$\bullet$};
   \node at (4,6) {$\bullet$};
   \node at (4,7) {$\bullet$};    
   \node at (5,1) {$\bullet$};
   \node at (5,2) {$\bullet$};
   \node at (5,3) {$\bullet$};  
   \node at (5,4) {$\bullet$};  
   \node at (5,5) {$\bullet$};
   \node at (5,6) {$\bullet$};
   \node at (5,7) {$\bullet$};    
   \node at (6,1) {$\bullet$};
   \node at (6,2) {$\bullet$};
   \node at (6,3) {$\bullet$}; 
   \node at (6,4) {$\bullet$};  
   \node at (6,5) {$\bullet$};
   \node at (6,6) {$\bullet$};
   \node at (6,7) {$\bullet$};    
   \node at (7,1) {$\bullet$};
   \node at (7,2) {$\bullet$};
   \node at (7,3) {$\bullet$}; 
   \node at (7,4) {$\bullet$};  
   \node at (7,5) {$\bullet$};
   \node at (7,6) {$\bullet$};
   \node at (7,7) {$\bullet$};    
   \node at (8,1) {$\bullet$};
   \node at (8,2) {$\bullet$};
   \node at (8,3) {$\bullet$};  
   \node at (8,4) {$\bullet$};  
   \node at (8,5) {$\bullet$};
   \node at (8,6) {$\bullet$};
   \node at (8,7) {$\bullet$};    
 \draw (4,1) -- (5,2) -- (5,3) -- (5,4) -- (5,5) -- (6,6) -- (7,7);
 \draw[dotted] (3,1) -- (3,2) -- (3,3) -- (4,4) -- (4,5) -- (5,6) -- (5,7);
 \draw[loosely dashdotted] (2,1) -- (3,2) -- (4,3) -- (5,4) -- (6,5) -- (7,6) -- (8,7);      
 \draw[dashed] (1,1) -- (1,2) -- (1,3) -- (2,4) -- (2,5) -- (2,6) -- (2,7);
 \end{tikzpicture}
\end{center}
\ \\[-10pt]
\caption{$L=(l_1,l_2,l_3,l_4)\in \mathcal{E}^{6}_{(4,3,3,2)}$}
\end{figure}

 Let
$
\ytableausetup{boxsize=18pt,aligntableaux=center}
{\pmb s}=\,
\begin{ytableau}
 a & b & c & d \\
 e & f & g \\
 h & i & j \\
 k & l 
\end{ytableau}
\in T((4,3,3,2),\mathbb{C})
$. 
 Then, the weight of $l_i$ are given by  
\[
 w^{4}_{{\pmb s}}(l_1)
=\frac{1}{1^a5^e6^h}, \quad
 w^{4}_{{\pmb s}}(l_2)
=\frac{1}{3^b5^f}, \quad
 w^{4}_{{\pmb s}}(l_3)
=\frac{1}{1^c2^g3^j4^i5^l6^k}, \quad
 w^{4}_{{\pmb s}}(l_4)
=\frac{1}{3^d}.
\]
\end{exam}}
{
\subsection{Jacobi-Trudi formulas}
The Jacobi-Trudi formula expresses the Schur polynomial as a determinant in terms of the complete symmetric polynomials $h_i(=s_{(r)})$ or the elementary symmetric polynomials $e_i(=s_{\{1\}^r})$. Let $\lambda=(\lambda_1,\ldots,\lambda_r)$ be a partition and $\lambda'=(\lambda_1',\ldots,\lambda_s')$ be the conjugate of $\lambda$ {defined by $\lambda_i'= \#\{j ~|~ \lambda_j \ge i\}$.} Then, the Jacobi-Trudi formula are
\[
s_\lambda(x_1,\ldots,x_n)={\rm det}[h_{(\lambda_i+j-i)}(x_1,\ldots,x_n)]_{1\le i,j\le r},
\]
and
\[
s_\lambda(x_1,\ldots,x_n)={\rm det}[e_{(\lambda_i'+j-i)}(x_1,\ldots,x_n)]_{1\le i,j\le s}.
\]
Taking into account formula (\ref{zetazetas}), we can expect that the Schur multiple zeta functions are also expressed as a determinant in terms of the multiple zeta star functions $\zetas$ or the multiple zeta functions $\zeta$. In fact, Nakasuji-Phuksuwan-Yamasaki showed the following Jacobi-Trudi formulas for SMZFs \cite{npy} by using the Gessel–Viennot method \cite{gv}:
\begin{prop}[Nakasuji-Phuksuwan-Yamasaki. 2018 \cite{npy}]
\label{npyjt}
  Let $\lambda=(\lambda_1,\ldots,\lambda_r)$ be a partition and $\lambda'=(\lambda_1',\ldots,\lambda_s')$ the conjugate of $\lambda$. Also, we assume ${\pmb s}=(s_{ij})=(a_{j-i})\in W_\lambda$.
 \begin{enumerate}
     \item Assume that $\Re(s_{i,\lambda_i})>1$ for all $1\le i \le r$. Then we have
\[
 \zeta_{\lambda}({\pmb s})
=\det\left[\zeta^{\star}(a_{-j+1},a_{-j+2},\ldots,a_{-j+(\lambda_{i}-i+j)})\right]_{1\le i,j\le r\,}.
\]
 Here, we understand that $\zeta^{\star}(\,\cdots)=1$ if $\lambda_{i}-i+j=0$ and $0$ if $\lambda_{i}-i+j<0$.
 \item Assume that $\Re(s_{\lambda_i',i})>1$ for all $1\le i \le s$. Then we have
\[
 \zeta_{\lambda}({\pmb s})
=\det\left[\zeta(a_{j-1},a_{j-2},\ldots,a_{j-(\lambda_{i}'-i+j)})\right]_{1\le i,j\le s\,}.
\]
 Here, we understand that $\zeta(\,\cdots)=1$ if $\lambda_{i}'-i+j=0$ and $0$ if $\lambda_{i}'-i+j<0$.
 \end{enumerate}
\end{prop}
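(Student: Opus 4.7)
\textbf{Proof plan for Proposition \ref{npyjt}.}

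The plan is to prove both parts by the Lindström--Gessel--Viennot (LGV) lattice path method applied, respectively, to the $H$-patterns and $E$-patterns introduced in Section \ref{prelim}: first establish the identity at the truncated level $\zeta^{N}_{\lambda}$, then pass to the limit $N\to\infty$ under the stated hypotheses. The constant-diagonal assumption ${\pmb s}=(a_{j-i})$ is indispensable because $s_{pq}=a_{q-p}$ depends only on the anti-diagonal $q-p$; this is what lets the weight of a path in a tuple factor into path-individual contributions and ultimately reassemble as multiple zeta-star (resp.\ multiple zeta) functions with the claimed arguments.

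For part (1), the steps are as follows. First, I would set up a weight-preserving bijection between $\mathrm{SSYT}_N(\lambda)$ and those non-intersecting, identity-type $H$-patterns $L\in\mathcal{H}^N_\lambda$ obtained by recording the entry $m_{i,k}$ of row $i$ of $M$ as the height of the $k$-th horizontal edge of $l_i$; under the diagonal weighting this sends the monomial $M^{\pmb s}$ to $w^N_{\pmb s}(L)$, giving
\[
\zeta^N_\lambda({\pmb s})
\;=\;\sum_{\substack{L\in\mathcal{H}^N_\lambda\\\text{non-intersecting}}} w^N_{\pmb s}(L).
\]
Second, I would apply the standard LGV tail-swap involution: since the endpoints $\{a_i\}$ and $\{b_j\}$ strictly interlace, any tuple of paths with a non-identity type must contain a crossing, and exchanging tails at the first crossing gives a sign-reversing involution on the complement of the non-intersecting patterns, hence
\[
\zeta^N_\lambda({\pmb s})
\;=\;\sum_{L\in\mathcal{H}^N_\lambda}\mathrm{sgn}(\mathrm{type}(L))\,w^N_{\pmb s}(L).
\]
Third, since $w^N_{\pmb s}(L)=\prod_i w^N_{\pmb s}(l_i)$ factors, this signed sum equals a determinant $\det[M^N_{ij}]$ with $M^N_{ij}=\sum_{l:\,a_i\to b_j} w^N_{\pmb s}(l)$. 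Fourth, I would evaluate each $M^N_{ij}$ by listing the heights $1\le n_1\le\cdots\le n_{H}\le N$ of the $H$ horizontal edges of a single path from $a_i$ to $b_j$ and using the diagonal rule to identify the subscripts on $a$; the sum then becomes a truncated $\zetas$ with the correct number of arguments. Taking the transpose of the matrix (which preserves the determinant) and then sending $N\to\infty$ under the corner hypothesis $\Re(s_{i,\lambda_i})>1$, which guarantees absolute convergence of each entry, produces the identity as stated.

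Part (2) is entirely parallel: replace $H$-patterns with $E$-patterns, replace $\lambda$ by its conjugate $\lambda'$, and use northeast rather than horizontal edges. The only substantive difference is that a northeast edge strictly increases the height, so consecutive edge-heights along a single path satisfy $n_1<n_2<\cdots$, so the sum over paths from $c_i$ to $d_j$ produces a truncated non-starred $\zeta(\cdots)$ in place of $\zetas(\cdots)$. The boundary conventions $\zetas(\cdots)=1$ (resp.\ $\zeta(\cdots)=1$) when the argument list is empty, and $0$ when it would have negative length, correspond respectively to the unique empty path and to the nonexistence of any monotone path from $a_i$ to $b_j$ (resp.\ from $c_i$ to $d_j$).

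The step I expect to be the main obstacle is step four: one needs the diagonal index $q-p$ of the $k$-th box of any $H$-ribbon $\theta_i$ underlying a path from $a_i$ to $b_j$ to be a function of $k$ together with the bookkeeping data alone, independently of the particular detour the ribbon takes, so that $w^N_{\pmb s}(l)$ reduces to a product depending only on the heights and hence sums cleanly to a truncated $\zetas$ with the prescribed subscripts $a_{-j+1},\ldots,a_{-j+(\lambda_i-i+j)}$. Once this diagonal-invariance is carefully verified, the LGV machinery together with the dominated-convergence argument in the corner variables delivers the claimed determinant formula essentially mechanically.
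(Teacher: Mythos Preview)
Your plan is correct and matches the paper's approach: the proposition is quoted from \cite{npy}, where it is proved by exactly the Gessel--Viennot lattice-path argument you outline, and the paper here only sketches the key identity \eqref{npykey} together with the observation that under the diagonal hypothesis $s_{ij}=a_{j-i}$ one has $X^{N}_{\lambda,1}({\pmb s})=0$ and $X^{N}_{\lambda}({\pmb s})$ equals the stated determinant. One small remark on presentation: the diagonal-invariance fact you isolate in step four (that the content $q-p$ of the $k$-th box of $\theta_i$ equals $k-i$, hence the edge weight depends only on the lattice position of the edge) is precisely what is needed already in step two to make the tail-swap involution weight-preserving, so you may wish to establish it before invoking the cancellation.
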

We explain the key points of the proof of (1).
 Let $\mathcal{H}^{N}_{\lambda,0}$ be the set of all $L=(l_1,\ldots,l_r)\in \mathcal{H}^{N}_{\lambda}$ 
 such that any distinct pair of $l_i$ and $l_j$ has no intersection and define 
  \[X^N_{\lambda}(\pmb s)=\sum_{\sigma\in S_H^\lambda}\varepsilon_\sigma\prod_{i=1}^r{\zetas}^N(\theta_i^\sigma(\pmb{s})),\ X^N_{\lambda,1}(\pmb s)=\sum_{L\in H^N_{\lambda}\setminus H^N_{\lambda,0}}\varepsilon_{type(L)}w^N_{\pmb s}(L),\]
where $\varepsilon_\sigma$ is the signature of $\sigma\in S_r$.
Then, they showed that 
\begin{equation}
\label{npykey}
\zeta_{\lambda}^N(\pmb s)=\sum_{\sigma\in S_H^\lambda}\varepsilon_\sigma\prod_{i=1}^r{\zetas}^N(\theta_i^\sigma(\pmb{s}))-\sum_{L\in H^N_{\lambda}\setminus H^N_{\lambda,0}}\varepsilon_{type(L)}w^N_{\pmb s}(L),
\end{equation}
Moreover, if  ${\pmb s}=(s_{ij})=(a_{j-i})\in W_\lambda$, then we obtain
\[
X^N_{\lambda}(\pmb s)=\det
\left[ {\zetas}^N(a_{-j+1},a_{-j+2},\ldots,a_{-j+(\lambda_i-i+j)})\right]_{1\le i,j\le r},\ X^N_{\lambda,1}({\pmb s})=0.
\]
This leads to (1). {Also, (2) can be shown by using $E$-rim decomposition and similar argument.} They also gave the following remark:
\begin{rmk}
\label{rmk:ErrorTerms}
 In some cases, 
 $X^{N}_{\lambda}({\pmb s})$ actually has a determinant expression without the assumption on variables;
\begin{align*}
\ytableausetup{boxsize=normal,aligntableaux=center}
 X^{N}_{(2,2)}
\left(~
\begin{ytableau}
 a & b \\
 c & d
\end{ytableau} 
~\right) 
&=\left|
\begin{array}{cc}
{\zetas}^N(a,b) & {\zetas}^N(c,d,b) \\
 {\zetas}^N(a) & {\zetas}^N(c,d)
\end{array}
\right|,\\[5pt]
 X^{N}_{(2,2,1)}
\left(~
\begin{ytableau}
 a & b \\
 c & d \\
 e
\end{ytableau} 
~\right) 
&=\left|
\begin{array}{ccc}
 {\zetas}^N(a,b) & {\zetas}^N(c,d,b) & {\zetas}^N(e,c,d,b) \\
 {\zetas}^N(a) & {\zetas}^N(c,d) & {\zetas}^N(e,c,d) \\
 0 & 1 & {\zetas}^N(e)
\end{array}
\right|.
\end{align*} 
 However, in general,
 $X^{N}_{\lambda}({\pmb s})$ can not be written as a determinant.
 For example, we have 
\begin{align*}
 X^{N}_{(2,2,2)}
\left(~
\begin{ytableau}
 a & b \\
 c & d \\
 e & f
\end{ytableau} 
~\right) 
&={\zetas}^N(a,b){\zetas}^N(c,d){\zetas}^N(e,f)
-{\zetas}^N(a,b){\zetas}^N(c){\zetas}^N(e,f,d)\\
&\ \ \ -{\zetas}^N(c,a){\zetas}^N(e,f,d,b) -{\zetas}^N(a){\zetas}^N(c,d,b){\zetas}^N(e,f)\\
&\ \ \ +{\zetas}^N(c,a,b){\zetas}^N(e,f,d) +{\zetas}^N(a){\zetas}^N(c){\zetas}^N(e,f,d,b)
\end{align*}  
 and see that the right-hand side does not seem to be expressed as a determinant
 (but is close to the determinant). 
 
 Similarly, $X^{N}_{\lambda,1}({\pmb s})$ does not vanish in general.
 For example,  
\begin{align*}
 X^{2}_{(2,2),1}
\left(~
\begin{ytableau}
 a & b \\
 c & d
\end{ytableau} 
~\right)
&=\frac{1}{1^a1^b1^c2^d}-\frac{1}{2^a1^b1^c1^d},
\end{align*}
 which actually vanishes when $a=d$.
\end{rmk}
Due to Remark \ref{rmk:ErrorTerms}, they assumed that the variables on the same diagonal lines are same. Compared with this, we treat this problem by taking summation over all permutations of component on the same diagonal lines of variables.

}

\subsection{Pushing rule}\label{push}
{For ${\pmb s}\in T(\lambda,\mathbb C)$ and ${\pmb t}\in T({(r)},\mathbb C)$ or $T({(\{1\}^r)},\mathbb C)$},
we construct a new Young tableau by inserting all the components in ${\pmb t}$ into ${\pmb s}$.
The insertion method which we use here is called {\bf pushing rule} and the recipe is as follows.

For ${\pmb t} = 
\ytableausetup{boxsize=normal,aligntableaux=center}
 \begin{ytableau}
  t_1 & 
  \cdots &
  t_r
\end{ytableau} \in T((r), {\mathbb C})$, we put $
\ytableausetup{boxsize=normal,aligntableaux=center}
 \begin{ytableau}
  t_1 
\end{ytableau}
$ on the top of some column of $\pmb s$ or next to the 
right-most box in the first row.
For example, when $\lambda=(3,1)$ and ${\pmb s}=
\ytableausetup{boxsize=normal,aligntableaux=center}
 \begin{ytableau}
s_{11} & s_{12} & s_{13} \\
s_{21} 
\end{ytableau}$, then there are 4 choices for $\ytableausetup{boxsize=normal,aligntableaux=center}
 \begin{ytableau}
  t_1 
\end{ytableau}
$:
$$
\ytableausetup{boxsize=normal,aligntableaux=center}
 \begin{ytableau}
*(gray)t_{1}  \\
s_{11} & s_{12} & s_{13}\\
s_{21} 
\end{ytableau}, \hspace{5mm}
\ytableausetup{boxsize=normal,aligntableaux=center}
 \begin{ytableau}
\none &*(gray) t_{1}  \\
s_{11} & s_{12} & s_{13}\\
s_{21} 
\end{ytableau}, \hspace{5mm} 
\ytableausetup{boxsize=normal,aligntableaux=center}
 \begin{ytableau}
\none & \none &*(gray) t_{1}  \\
s_{11} & s_{12} & s_{13}\\
s_{21} 
\end{ytableau}, \hspace{5mm} 
\ytableausetup{boxsize=normal,aligntableaux=center}
 \begin{ytableau}
s_{11} & s_{12}  & s_{13} & *(gray)t_{1} \\
s_{21} 
\end{ytableau}. 
$$
If $\ytableausetup{boxsize=normal,aligntableaux=center}
 \begin{ytableau}
  t_1 
\end{ytableau}
$ is on ${\pmb s}$, then we {\it push} that column from the top to make the shape of the Young tableau. 
If the shape does not become that of the Young tableau after the {\it push}, then we discard that pattern.
Otherwise, we do not do anything more than this:
$$
\ytableausetup{boxsize=normal,aligntableaux=center}
 \begin{ytableau}
*(gray)t_{1} & s_{12} & s_{13}\\
s_{11} \\
s_{21} 
\end{ytableau}, \hspace{5mm}
\ytableausetup{boxsize=normal,aligntableaux=center}
 \begin{ytableau}
s_{11} & *(gray)t_{1} & s_{13}\\
s_{21} & s_{12}
\end{ytableau}, \hspace{5mm} 
{\rm{discard}},\hspace{5mm}
\ytableausetup{boxsize=normal,aligntableaux=center}
 \begin{ytableau}
s_{11} & s_{12}  & s_{13} & *(gray)t_{1} \\
s_{21} 
\end{ytableau}. 
$$
Repeat this procedure to each resulting pattern in the order of 
$\ytableausetup{boxsize=normal,aligntableaux=center}
 \begin{ytableau}
  t_2 
\end{ytableau},
\ytableausetup{boxsize=normal,aligntableaux=center}
 \begin{ytableau}
  t_3
\end{ytableau}
, \cdots, 
\ytableausetup{boxsize=normal,aligntableaux=center}
 \begin{ytableau}
  t_r 
\end{ytableau}
$, where
$
\ytableausetup{boxsize=normal,aligntableaux=center}
 \begin{ytableau}
  t_{i+1} 
\end{ytableau}
$ should be put to the right of $
\ytableausetup{boxsize=normal,aligntableaux=center}
 \begin{ytableau}
  t_{i} 
\end{ytableau}
$ ($1\leq i\leq r-1$) but  not necessarily to the next. Let ${\pmb U}_H$ be the set of new Young tableaux obtained by this {\it pushing rule}.
\begin{exam}
Let $\lambda=(3,2,1)$ and $r=2$.
Then 
\begin{align*}
{\pmb U}_H=
&
\left\{ \hspace{3mm}
\begin{ytableau}
  *(gray)t_1& *(gray)t_2&s_{13}\\
  s_{11}&s_{12}\\
s_{21}&s_{22}\\
s_{31}
\end{ytableau},\hspace{2mm}
\begin{ytableau}
  *(gray)t_1&s_{12}& *(gray)t_2\\
  s_{11}&s_{22}&s_{13}\\
s_{21}\\
s_{31}
\end{ytableau},\hspace{2mm}
\begin{ytableau}
  *(gray)t_1&s_{12}&s_{13}& *(gray)t_2\\
  s_{11}&s_{22}\\
s_{21}\\
s_{31}
\end{ytableau}, \hspace{2mm}
\begin{ytableau}
  s_{11}&*(gray)t_1&*(gray)t_2\\
  s_{21}&s_{12}&s_{13}\\
s_{31}&s_{22}
\end{ytableau}\right.
, \\
& \hspace{2mm}
\left.
\begin{ytableau}
  s_{11}&*(gray)t_1&s_{13}&*(gray)t_2\\
  s_{21}&s_{12}\\
s_{31}&s_{22}
\end{ytableau},
\hspace{2mm} \begin{ytableau}
  s_{11}&s_{12}&*(gray)t_1&*(gray)t_2\\
  s_{21}&s_{22}&s_{13}\\
s_{31}
\end{ytableau},
\hspace{2mm}\begin{ytableau}
  s_{11}&s_{12}&s_{13}&*(gray)t_1&*(gray)t_2\\
  s_{21}&s_{22}\\
s_{31}
\end{ytableau}\hspace{3mm}\right\}.
\end{align*}
\end{exam}

Next, 
for ${\pmb t} = 
\ytableausetup{boxsize=normal,aligntableaux=center}
 \begin{ytableau}
  t_1 \\ 
  \vdots \\
  t_r
\end{ytableau} \in T((\{1\}^r), {\mathbb C})$, we put $
\ytableausetup{boxsize=normal,aligntableaux=center}
 \begin{ytableau}
  t_1 
\end{ytableau}
$ to the left of some row of $\pmb s$ or down to the 
bottommost box in the first column.
If $\ytableausetup{boxsize=normal,aligntableaux=center}
 \begin{ytableau}
  t_1 
\end{ytableau}
$ is to the left of ${\pmb s}$, then we {\it push} that row from the left to make the shape of the Young tableau. 
If the shape does not become that of the Young tableau after the {\it push}, then we discard that pattern.
Otherwise, we do not do anything. 
Repeat this procedure in the order of $\ytableausetup{boxsize=normal,aligntableaux=center}
 \begin{ytableau}
  t_2 
\end{ytableau},
\ytableausetup{boxsize=normal,aligntableaux=center}
 \begin{ytableau}
  t_3
\end{ytableau}
, \cdots, 
\ytableausetup{boxsize=normal,aligntableaux=center}
 \begin{ytableau}
  t_r 
\end{ytableau}
$, where
$
\ytableausetup{boxsize=normal,aligntableaux=center}
 \begin{ytableau}
  t_{i+1} 
\end{ytableau}
$ should be put below $
\ytableausetup{boxsize=normal,aligntableaux=center}
 \begin{ytableau}
  t_i 
\end{ytableau}
$
($1\leq i \leq r-1$) but  not necessarily to the next. Let ${\pmb U}_E$ be the set of new Young tableaux obtained by this.
\begin{exam}
Let $\lambda=(3,2,1)$ and $r=2$.
Then 
\begin{align*}
{\pmb U}_E=
&
\left\{ \hspace{3mm}
\begin{ytableau}
*(gray)t_1&  s_{11}&s_{12}&s_{13}\\
*(gray)t_2& s_{21}&s_{22}\\
s_{31}
\end{ytableau},\hspace{2mm}
\begin{ytableau}
*(gray)t_1&  s_{11}&s_{12}&s_{13}\\
 s_{21}&s_{22}\\
*(gray)t_2&s_{31}
\end{ytableau},\hspace{2mm}
\begin{ytableau}
*(gray)t_1&  s_{11}&s_{12}&s_{13}\\
s_{21}&s_{22}\\
s_{31}\\
*(gray)t_2
\end{ytableau},\right. \hspace{2mm}\\
&
\left.\begin{ytableau}
  s_{11}&s_{12}&s_{13}\\
*(gray)t_1& s_{21}&s_{22}\\
*(gray)t_2&s_{31}
\end{ytableau},\hspace{2mm}
\begin{ytableau}
  s_{11}&s_{12}&s_{13}\\
*(gray)t_1& s_{21}&s_{22}\\
s_{31}\\
*(gray)t_2
\end{ytableau},\hspace{2mm}
\begin{ytableau}
  s_{11}&s_{12}&s_{13}\\
 s_{21}&s_{22}\\
*(gray)t_1&s_{31}\\
*(gray)t_2
\end{ytableau},\hspace{2mm}
\begin{ytableau}
  s_{11}&s_{12}&s_{13}\\
 s_{21}&s_{22}\\
s_{31}\\
*(gray)t_1\\
*(gray)t_2
\end{ytableau}
\hspace{3mm}\right\}.
\end{align*}
\end{exam}


\subsection{Special case}
In this subsection, we focus on the product of type {$\zeta_{(\ell+1,\{1\}^k)}(\pmb s)\cdot\zeta_{(\{1\}^m)}(\pmb t)$ for
$\pmb s\in W_{(\ell+1,\{1\}^k)}$ and $\pmb t\in W_{(\{1\}^m)}$.}

First we deal with the case $(k,{\ell},m)=(2,1,2)$. In order to explain the phenomenon, we introduce the following notation for now:
\[\begin{ytableau}
 s_{11}\\
  s_{21}&s_{22}&s_{23}\\
 s_{31}
\end{ytableau}=\sum_{\substack{1\le n_{11}<n_{21}<n_{31}\\n_{21}\le n_{22}\le n_{23}}}\frac{1}{n_{11}^{s_{11}}n_{21}^{s_{21}}n_{22}^{s_{22}}n_{23}^{s_{23}}n_{31}^{s_{31}}}.
\]
Computing directly, we find the following example.
\begin{exam}{For $\begin{ytableau}
  x_1&y_1\\
  x_2\\
\end{ytableau}\in W_{(2,1)}$ and $\begin{ytableau}
  *(gray) z_1\\
  *(gray) z_2
\end{ytableau}\in W_{(1,1)}$, we have}
\begin{align*}
\begin{ytableau}
  x_1&y_1\\
  x_2\\
\end{ytableau}\cdot\begin{ytableau}
  *(gray) z_1\\
  *(gray) z_2
\end{ytableau}&=\begin{ytableau}
  x_1&y_1\\
  x_2\\
  *(gray)z_1\\
  *(gray)z_2
\end{ytableau}+\begin{ytableau}
  x_1&y_1\\
  *(gray)z_1&x_2\\
  *(gray)z_2\\
\end{ytableau}+\begin{ytableau}
  *(gray)z_1&x_1&y_1\\
  x_2\\
  *(gray)z_2
\end{ytableau}+\begin{ytableau}
  *(gray)z_1&x_1&y_1\\
  *(gray)z_2&x_2
\end{ytableau}\\
&-\begin{ytableau}
  *(gray)z_1\\
  x_2&x_1&y_1\\
  *(gray)z_2
\end{ytableau}+\begin{ytableau}
  x_1\\
  *(gray)z_1&x_2&y_1\\
  *(gray)z_2
\end{ytableau}.
\end{align*}
\end{exam}
One can check that if $x_1=x_2=z_1$ then a simple generalization of Pieri formula holds.
In the following, we generalize this result.
For $\lambda=(\lambda_1,\ldots,\lambda_r)$, we define the set $W_{\lambda,R}$ by 
\[
 W_{\lambda,R}
=
\{{\pmb s}=(s_{ij})\in W_\lambda\,|\, \Re(s_{i\lambda_i})>1\text{ for $1\le i\le r$}\}.
\]
\begin{theorem}[$(\lambda=(2,\{1\}^{k-1}))\cdot (\pmb{e}_{k}=(\{1\}^{k}))$]
\label{83}
Let $k$ be a positive integer. Then we have that 
\begin{equation}
    \label{easypieri}
    \ytableausetup{boxsize=1.5em}
\begin{ytableau}
  x_1&y\\
  x_2\\
  \vdots\\
  x_k
\end{ytableau}\cdot\begin{ytableau}
  z_1\\
  z_2\\
  \vdots\\
  z_k
\end{ytableau}=\sum_{\pmb u_\mu\in\pmb U_E}\zeta_{\mu}(\pmb u_\mu)+E_{k}(\pmb{ x};y;\pmb{z}),\ \left(\begin{ytableau}
  x_1&y\\
  x_2\\
  \vdots\\
  x_k
\end{ytableau}\in W_{\lambda,R},\begin{ytableau}
  z_1\\
  z_2\\
  \vdots\\
  z_k
\end{ytableau}\in W_\lambda\right),
\end{equation}
where $\pmb x=(x_{1},\ldots,x_k)$ and $\pmb z=(z_{1},\ldots,z_k)$.
In addition, the error term $E_{k}(\pmb{ x};y;\pmb{z})=0$ if $\pmb x=(a,\ldots,a)$ and $\pmb z=(a,\ldots,a,z_k)$ for $a\in\mathbb{C}$ {with $\Re (a)>1$}.
\end{theorem}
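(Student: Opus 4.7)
The statement has two parts: the identity (\ref{easypieri}), which serves to define the error term $E_k(\pmb x; y; \pmb z)$ once the set $\pmb U_E$ is made explicit, and the vanishing of $E_k$ under the specialization $\pmb x = (a,\ldots,a)$, $\pmb z = (a,\ldots,a,z_k)$. My plan is to prove both by directly analyzing the double index sum underlying the left-hand side, and then constructing an involution to handle the residual error.

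First I would expand the left-hand side as
\[
\sum_{\substack{1\le n_{11}\le n_{12}\\ n_{11}<n_{21}<\cdots<n_{k1}}}\ \sum_{1\le m_1<\cdots<m_k}\ \frac{1}{n_{11}^{x_1}\,n_{12}^{y}\,n_{21}^{x_2}\cdots n_{k1}^{x_k}\,m_1^{z_1}\cdots m_k^{z_k}},
\]
and then reorganize the sum according to how the two strict chains $n_{11}<n_{21}<\cdots<n_{k1}$ and $m_1<m_2<\cdots<m_k$ are interleaved. Each interleaving that produces a valid column-strict, row-weakly-increasing tableau corresponds bijectively to one of the tableaux in $\pmb U_E$ obtained by the pushing rule: the $z_i$'s that land strictly between two $n_{j1}$'s get pushed to the left of the corresponding row, while the $z_i$'s that land past $n_{k1}$ extend the first column downward. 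Summing these contributions gives exactly $\sum_{\pmb u_\mu\in\pmb U_E}\zeta_\mu(\pmb u_\mu)$.

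The remaining interleavings --- those in which some $m_\ell$ coincides with some $n_{i1}$ or otherwise fails to fit any pushing-rule placement --- constitute $E_k(\pmb x;y;\pmb z)$. I would obtain an explicit signed expression for $E_k$ as a finite sum of Schur-type zeta values of non-standard shape, in the spirit of the two error tableaux displayed in the example preceding the theorem. These residual shapes arise because a collision $m_\ell=n_{i1}$ can be resolved in two inequivalent ways --- either by treating $m_\ell$ as the ``smaller'' or ``larger'' index of the pair --- and these two resolutions appear with opposite signs.

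For the vanishing statement, the idea is that once $\pmb x=(a,\ldots,a)$ and $z_1=\cdots=z_{k-1}=a$, the exponents attached to the indices in the first column of the hook become indistinguishable from those attached to $z_1,\ldots,z_{k-1}$. I would construct an involution on the residual index configurations that swaps an index from the first column of the hook with an index from the appended $\zeta_{(\{1\}^k)}$ column, matched so that the two configurations contribute opposite signs but numerically equal values. The main obstacle is defining this involution globally and checking that it is well-defined across all the non-standard shapes appearing in $E_k$, and in particular that the distinguished index carrying the exponent $z_k$ (which is not equal to $a$) can be consistently kept as a fixed point; a natural candidate is the reflection exchanging the two columns of the hook at levels where the exponents have collapsed to $a$, leaving the $z_k$-carrying index untouched.
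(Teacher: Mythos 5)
Your identity (\ref{easypieri}) is, as you implicitly note, essentially the definition of $E_k$, so the substance of the theorem is the vanishing claim, and there your argument has a genuine gap. Two concrete problems. First, your description of where $E_k$ comes from is not right: the residual terms are not ``collision'' terms where some $m_\ell$ coincides with some $n_{i1}$. In the worked example immediately preceding the theorem ($k=2$) the two error terms are full signed sums over non-standard shapes with offset rows, e.g.\ $-\sum_{n_{z_1}<n_{x_2}<n_{z_2},\; n_{x_2}\le n_{x_1}\le n_{y_1}}$ in the notation introduced there. They arise because a pushed tableau (say, row~1: $x_1,y_1$; row~2: $z_1,x_2$; row~3: $z_2$) imposes constraints from column-strictness, such as $n_{y_1}<n_{x_2}$, that are absent from the original product $\zeta_{(2,1)}\cdot\zeta_{(1,1)}$, and conversely some admissible configurations of the product fit no pushed tableau; so the clean bijection ``valid interleavings $\leftrightarrow$ elements of $\pmb U_E$'' that you assert does not hold, and the signed corrections are of a different nature than you describe. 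Since your involution is supposed to act on these residual configurations, misidentifying them undermines the second half of your plan. Second, even granting a correct explicit form of $E_k$, you only name ``a natural candidate'' for the sign-reversing involution and yourself flag that its global well-definedness across all residual shapes is the main obstacle; that obstacle \emph{is} the proof, and it is not carried out.

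For comparison, the paper sidesteps the explicit computation of $E_k$ entirely. It first checks that the terms $\zeta_\mu(\pmb u_\mu)$, $\pmb u_\mu\in\pmb U_E$, count disjoint sets of index configurations (via the chain of inequalities $y_\ell\le x_{i_\ell}$ versus $x_{i_\ell}<y_\ell\le x_{j_\ell}$ distinguishing two placements), which makes $E_k$ well defined. Then, under the specialization $\pmb x=(a,\ldots,a)$, $\pmb z=(a,\ldots,a,z_k)$, it evaluates the left-hand side by Proposition \ref{npyjt} as $\zeta(\{a\}^k)\zeta(y)\zeta(\{a\}^{k-1},z)-\zeta(y,\{a\}^k)\zeta(\{a\}^{k-1},z)$ and each pushed tableau on the right-hand side as a $2\times2$ or $3\times3$ determinant of multiple zeta values, and verifies that the resulting telescoping sum of determinants over the insertion position $\ell=k,\ldots,2k-1$ reproduces the left-hand side. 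If you want to salvage your route you would need to first derive the complete signed list of offset shapes constituting $E_k$ for general $k$ and then exhibit the exponent-permuting relabelling matching positive and negative shapes (as happens for the two error terms in the $k=2$ example when $x_1=x_2=z_1=a$); the determinant computation is considerably shorter and is what the paper actually does.
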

\begin{proof}
We denote by $\pmb{s}_{i_1\ \cdots\ i_k}$ the Young tableau by pushing boxes {$\begin{ytableau}*(gray)z_j\end{ytableau}$} from the left on $i_j$-th row. 
For example,
\[\pmb{s}_{1\ \cdots\ k}=\begin{ytableau}
  *(gray)z_1&x_1&y\\
  *(gray)z_2&x_2\\
  *(gray)\vdots&\vdots\\
  *(gray)z_k&x_k
\end{ytableau},\ \pmb{s}_{2\ \cdots\ k+1}=\begin{ytableau}
  x_1&y\\
  *(gray)z_1&x_2\\
  *(gray)\vdots&\vdots\\
  *(gray)z_{k-1}&x_k\\
  *(gray)z_k
\end{ytableau},\ \pmb{s}_{k+1\ \cdots\ 2k}=\begin{ytableau}
  x_1&y\\
  \vdots\\
  x_k\\
  *(gray)z_1\\
  *(gray)\vdots\\
  *(gray)z_k
\end{ytableau}.\]
We assume $(i_1,\ldots,i_k)\neq (j_1,\ldots, j_k)$ and $i_{\ell}< j_{\ell}$, then we find that $\zeta(\pmb{s}_{i_1\ \cdots\ i_k})$ satisfies the relation $y_{\ell}\le x_{i_{\ell}}$ and $\zeta(\pmb{s}_{j_1\ \cdots\ j_k})$ satisfies the relation $x_{i_{\ell}}<y_{\ell}\le x_{j_{\ell}}$.
Therefore, the right-hand sum of (\ref{easypieri}) does not duplicate the Euler-Zagier multiple zeta functions. 

 If we assume $x_i=z_j=a$ for $i=1,\ldots,k$ and $j=1,\ldots,k-1$, then we confirm that this gives no error terms. 
 By Proposition \ref{npyjt}, the left-hand side of (\ref{easypieri}) is 
 \begin{equation}
 \label{84}
    \zeta(\underbrace{a,\ldots,a}_k)\zeta(y)\zeta(\underbrace{a,\ldots,a}_{k-1},z)-\zeta(y,\underbrace{a,\ldots,a}_k)\zeta(\underbrace{a,\ldots,a}_{k-1},z).
 \end{equation} 
On the other hand, the right-hand side of (\ref{easypieri}) is 
 \begin{equation}
 \label{85}
    \sum_{{\ell}=k}^{2k-1}\begin{vmatrix}
     \zeta(\underbrace{a,\ldots,a}_{{\ell}-1},z)&\zeta(\underbrace{a,\ldots,a}_{{\ell}},z)&\zeta(y,\underbrace{a,\ldots,a}_{{\ell}},z)\\
       \zeta(\underbrace{a,\ldots,a}_{2k-1-{\ell}})&\zeta(\underbrace{a,\ldots,a}_{2k-{\ell}})&\zeta(y,\underbrace{a,\ldots,a}_{2k-{\ell}})\\
       0&1&\zeta(y)
    \end{vmatrix}+\begin{vmatrix}
     \zeta(\underbrace{a,\ldots,a}_{\ell},z)&\zeta(y,\underbrace{a,\ldots,a}_{{\ell}},z)\\
       \zeta(\underbrace{a,\ldots,a}_{2k-1-{\ell}})&\zeta(y,\underbrace{a,\ldots,a}_{2k-1-{\ell}})\\
    \end{vmatrix}.
 \end{equation} 
 Since (\ref{84}) and (\ref{85}) are the same, we complete proving this theorem.
\end{proof}
The next corollary may not be a natural consequence of Theorem \ref{83}, but we can show by the same way to the proof of Theorem \ref{83}. 
\begin{cor}[$(\lambda=({\ell}+1,\{1\}^{k-1}))\cdot (\pmb{e}_{m}=(\{1\}^{m}))$]
\label{216}
For a non-negative integer $\ell$ and positive integers $k$ and $m$, it holds that 
\[
\ytableausetup{boxsize=1.5em}
\begin{ytableau}
  x_1&y_1&\cdots&y_{{\ell}}\\
  x_2\\
  \vdots\\
  x_k
\end{ytableau}\cdot\begin{ytableau}
  z_1\\
  z_2\\
  \vdots\\
  z_m
\end{ytableau}=\sum_{\pmb u_\mu\in \pmb U_E}\zeta_{\mu}(\pmb u_\mu)+E_{k,{\ell},m}(\pmb{ x};\pmb{y};\pmb{z}),\ \left(\begin{ytableau}
  x_1&y_1&\cdots&y_\ell\\
  x_2\\
  \vdots\\
  x_k
\end{ytableau}\in W_{\lambda,R},\begin{ytableau}
  z_1\\
  z_2\\
  \vdots\\
  z_k
\end{ytableau}\in W_\lambda\right),
\]
where $\pmb x=(x_{1},\ldots,x_k)$, $\pmb y=(y_1,\ldots,y_\ell)$, $\pmb z=(z_{1},\ldots,z_m)$.
In addition, the error term $E_{k,{\ell},m}(\pmb{ x};\pmb{y};\pmb{z})=0$ if $\pmb x=(a,\ldots,a)$ and $\pmb z=(a,\ldots,a,z_k,\ldots,z_m)$ for $a\in\mathbb{C}$ {with $\Re (a)>1$}.
\end{cor}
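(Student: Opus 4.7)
The plan is to mimic the structure of the proof of Theorem~\ref{83}, upgrading it to handle an arbitrary arm length $\ell$ and an arbitrary column length $m$. Set $\lambda=(\ell+1,\{1\}^{k-1})$. The argument breaks naturally into two pieces: (a) parametrizing the tableaux produced by the $E$-type pushing rule and verifying that the enumeration is duplication-free, and (b) specializing to $\pmb x=(a,\ldots,a)$, $\pmb z=(a,\ldots,a,z_k,\ldots,z_m)$ and matching the two sides via Proposition~\ref{npyjt}(2).

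For part (a), I would generalize the notation $\pmb s_{i_1\cdots i_k}$ introduced in the proof of Theorem~\ref{83} to a tuple $\pmb s_{i_1\cdots i_m}$, where $1\le i_1<i_2<\cdots<i_m\le k+m$ records the rows that receive $z_1,\ldots,z_m$ under the $E$-type pushing rule (with $i_j>k$ meaning that $z_j$ is appended below the original hook). The ``discard'' clause of the pushing rule automatically filters out tuples whose pushed configuration is not a Young diagram. As in Theorem~\ref{83}, for two distinct tuples with first disagreement at position $p$ and $i_p<j_p$, the Euler--Zagier inequalities defining the two Schur multiple zeta values force $z_p\le x_{i_p}$ in one case and $x_{i_p}<z_p\le x_{j_p}$ in the other, so the contributions from different tuples are supported on disjoint ranges of the summation index.

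For part (b), under the given specialization, every anti-diagonal of $\pmb s$ is constant (equal to $a$ away from the arm $y_1,\ldots,y_\ell$), so the diagonal hypothesis $\pmb s=(a_{j-i})$ of Proposition~\ref{npyjt}(2) is satisfied, and the same is true for each pushed tableau $\pmb u_\mu$. The left-hand side then factors as the product of a Jacobi--Trudi determinant of size $(\ell+1)\times(\ell+1)$ for $\zeta_\lambda$ with the product of $\zeta$-values coming from the column. I would expand the right-hand side determinants for each hook shape $\mu$ and group terms by the row $i_m$ where the bottommost $z_m$ lands, obtaining a telescoping identity that directly generalizes (\ref{84})--(\ref{85}); an induction on $m$, with Theorem~\ref{83} serving as the base case $\ell=1$, $m=k$, then closes the argument.

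The principal obstacle will be controlling the combinatorial bookkeeping when $\ell\ge 1$ and $m>k$ simultaneously: the arm enlarges the Jacobi--Trudi matrix of $\zeta_\lambda$, while the excess boxes $z_j$ for $j\ge k+1$ create shapes $\mu$ with legs extending below row $k$ and hence larger Jacobi--Trudi matrices on the right. I expect that peeling off $z_m$ last (or dually $y_\ell$ first) and invoking the cofactor expansion along the added row or column of the hook Jacobi--Trudi matrix will reduce the general identity to the already verified case of Theorem~\ref{83} combined with an elementary product identity handling the enlarged arm.
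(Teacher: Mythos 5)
Your proposal follows essentially the same route as the paper, which in fact gives no written proof of Corollary~\ref{216} beyond the remark that it ``can be shown by the same way to the proof of Theorem~\ref{83}'': you generalize the insertion-position bookkeeping $\pmb s_{i_1\cdots i_m}$, reuse the disjoint-range argument for non-duplication, and verify the vanishing of the error term in the specialized case via the Jacobi--Trudi determinants of Proposition~\ref{npyjt}, exactly as in the proof of Theorem~\ref{83}. The remaining combinatorial details you flag (the enlarged arm and the case $m\ne k$) are genuinely left implicit by the paper as well, so your elaboration is a faithful and reasonable completion of the intended argument.
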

This corollary ensures that a simple generalization of the Pieri formula holds with some error terms which vanish under some assumptions. In the following sections, taking a special sum of both sides, we remove these assumptions.
Also, we obtain Corollary \ref{216} as one of the corollaries of the theorems shown in Section \ref{pieri}.


\section{Extended Jacobi-Trudi formula}\label{jacobitrudi}
In this section, we consider the extension of the Jacobi-Trudi formula to show the Pieri formulas. This section devotes the proof of Theorem \ref{extendJT}.

First, we show a key lemma for the proof of Theorem \ref{extendJT}. This lemma connects the Schur multiple zeta functions with a sum of products of multiple zeta star functions.
{In preparation, we define 
\[\sum_{diag}=\sum_{\substack{\sigma_j\in S_j\\j\in \mathbb Z}}\prod_{i\in \mathbb Z}\sigma_i\]
for $S_j$ being the set of permutation of the elements of $I(J)=\{(k,l)\in D(\lambda)~|~l-k=j\}$. We have to note that since the number of boxes in a fixed Young tableau is finite, the product and the sum are finite. Actually, we denote by $\lambda'=(\lambda_1',\ldots,\lambda_s')$ the conjugate of $\lambda$ then we find that
\[\sum_{\substack{\sigma_j\in S_j\\j\in \mathbb Z}}\prod_{i\in \mathbb Z}\sigma_i=\sum_{\substack{\sigma_j\in S_j\\j\in \mathbb Z\cap[2-\lambda_2',\lambda_2-2]}}\prod_{i=2-\lambda_2'}^{\lambda_2-2}\sigma_i.\]
}
{Also, we define a set $W_{\lambda,H}$ by
\[
 W_{\lambda,H}
=
\left\{{\pmb s}=(s_{ij})\in T(\lambda,\mathbb{C})\,\left|\,
\begin{array}{l}
 \text{$\Re(s_{ij})\ge 1$ for all $(i,j)\in D(\lambda) \setminus H(\lambda)$} \\[3pt]
 \text{$\Re(s_{ij})>1$ for all $(i,j)\in H(\lambda)$}
\end{array}
\right.
\right\},
\]
where $H(\lambda)=\{(i,j)\in D(\lambda)~|~i-j\in\{i-\lambda_i~|~1\le i\le r\} \}$.} 
\begin{lemma}
For any partition $\lambda=(\lambda_1,\ldots,\lambda_r)$ and {$\pmb s\in W_{\lambda,H}$}, we have
\label{diag}
\[\sum_{diag}\zeta_\lambda(\pmb{s})=\sum_{diag}\sum_{\sigma\in S_H^\lambda}\varepsilon_\sigma\prod_{i=1}^r{\zetas}(\theta_i^\sigma(\pmb{s})).\]
\end{lemma}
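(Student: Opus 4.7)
The plan is to reduce the claim to the Nakasuji--Phuksuwan--Yamasaki identity \eqref{npykey} and then show that the remaining ``error'' term $X^{N}_{\lambda,1}$ vanishes after symmetrization via $\sum_{diag}$. At the truncated level, \eqref{npykey} gives
\[
\zeta_\lambda^N(\pmb s) = \sum_{\sigma\in S_H^\lambda}\varepsilon_\sigma \prod_{i=1}^r {\zetas}^N(\theta_i^\sigma(\pmb s)) \;-\; \sum_{L \in \mathcal{H}^N_\lambda \setminus \mathcal{H}^N_{\lambda,0}} \varepsilon_{\mathrm{type}(L)}\, w^N_{\pmb s}(L).
\]
Applying $\sum_{diag}$ to both sides and passing to $N\to\infty$---convergence of the left-hand side and of every $\zetas(\theta_i^\sigma(\pmb s))$ being guaranteed by the hypothesis $\pmb s\in W_{\lambda,H}$---the lemma is equivalent to the statement that, for every $N$,
\[
\sum_{diag} \sum_{L \in \mathcal{H}^N_\lambda \setminus \mathcal{H}^N_{\lambda,0}} \varepsilon_{\mathrm{type}(L)}\, w^N_{\pmb s}(L) = 0.
\]

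To prove this I would construct a sign-reversing involution $\iota$ on $\mathcal{H}^N_\lambda \setminus \mathcal{H}^N_{\lambda,0}$ in the spirit of Lindstr\"om--Gessel--Viennot. Given $L$, choose the lexicographically smallest ordered pair $(i,j)$ with $i<j$ such that $l_i$ and $l_j$ share at least one lattice point, let $P$ be (say) the last common vertex of this particular pair, and define $\iota(L)$ by swapping the portions of $l_i$ and $l_j$ after $P$. Then $\mathrm{type}(\iota(L)) = \mathrm{type}(L)\cdot (i\,j)$ and hence $\varepsilon_{\mathrm{type}(\iota(L))} = -\varepsilon_{\mathrm{type}(L)}$; both $L$ and $\iota(L)$ still contain $P$ as a common lattice point, so $\iota$ does map $\mathcal{H}^N_\lambda \setminus \mathcal{H}^N_{\lambda,0}$ to itself, and the canonical choice of $(i,j,P)$ makes it a genuine involution.

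The heart of the argument is matching weights. The multiset of horizontal edges of $L$, viewed as lattice positions $(x,y)$, coincides with that of $\iota(L)$; what differs is the assignment of boxes $(p,q)\in D(\lambda)$ to these edges via the $H$-rim decompositions attached to $\mathrm{type}(L)$ and $\mathrm{type}(\iota(L))$ respectively. The crucial combinatorial observation to verify is that, for each horizontal edge at lattice position $(x,y)$, the diagonal $q-p$ of its associated box depends only on $(x,y)$ and on the starting column of the path-segment through $P$---data preserved under the uncrossing operation. Consequently the boxes exchanged between $L$ and $\iota(L)$ lie on the same diagonals of $D(\lambda)$, so there exists a permutation $\tau \in \prod_j S_j$, supported entirely on individual diagonals, with $w^N_{\pmb s}(L) = w^N_{\tau \pmb s}(\iota(L))$.

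Because $\sum_{diag}$ is exactly the sum over $\prod_j S_j$, a change of variable yields $\sum_{diag} w^N_{\pmb s}(L) = \sum_{diag} w^N_{\pmb s}(\iota(L))$, and combined with $\varepsilon_{\mathrm{type}(\iota(L))} = -\varepsilon_{\mathrm{type}(L)}$ the contributions of each pair $\{L,\iota(L)\}$ cancel term by term in the symmetrized sum; this gives the vanishing displayed above and hence the lemma. The main obstacle is rigorously establishing the diagonal-preservation claim in the previous paragraph: one has to track, through the bijection between horizontal edges and boxes of each ribbon, how the box's diagonal is recovered from the edge's lattice position together with the local path data near $P$, and then check that uncrossing only permutes boxes within a common diagonal. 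Once this invariance is pinned down, the rest of the argument is formal.
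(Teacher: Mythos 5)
Your proposal is correct and follows essentially the same route as the paper: both reduce the lemma to the identity \eqref{npykey} and then kill the symmetrized error term $\sum_{diag}X^N_{\lambda,1}(\pmb s)$ by pairing each crossing family $L$ with its tail-swapped partner and observing that the swap only permutes the exponents $s_{pq}$ within diagonals of $D(\lambda)$, so the two contributions cancel under $\sum_{diag}$. The ``diagonal-preservation'' step you flag as the main obstacle is exactly the content of the paper's explicit exponent computation for $w^N_{\pmb s}(\overline L)$ (the exchanges are of the form $s_{1i}\leftrightarrow s_{2(i+1)}$, which lie on a common diagonal), so your outline matches the published argument in substance and level of detail.
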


\begin{proof}
{As in \cite{npy}, 
we consider identity (\ref{npykey}):
\[
    \zeta_{\lambda}^N(\pmb s)=\sum_{\sigma\in S_H^\lambda}\varepsilon_\sigma\prod_{i=1}^r{\zetas}^N(\theta_i^\sigma(\pmb{s}))-X_{\lambda,1}^N(\pmb s).
\]
Now we focus on the error $X_{\lambda,1}^N(\pmb s)$. For $L = (l_1,\ldots , l_r) \in H^N_{\lambda}\setminus H^N_{\lambda,0}$ of type $\sigma$, we consider the rightmost intersection point $(p,q)$ appearing in $L$.
For the sake of simplicity, we can assume $l_1$ and $l_2$ cross at $(p,q)$ (FIGURE \ref{fig2}).
Then, we expand $w^N_{\pmb s}(L)$ as \[\prod_{i=1}^{m_1}a_{1i}^{-s_{1i}}\prod_{i=1}^{m_2}a_{2i}^{-s_{2i}}\prod_{j=3}^r\left(\prod_{i=1}^{m_j}a_{ji}^{-s_{ji}}\right).\]
On the other hand, we consider the pair of path $\overline{L}=(\overline l_1,\overline{l_2},l_3,\ldots,l_r)$.
Here, $\overline l_i$ follows $l_i$ until it meets the first intersection point $(p,q)$ and after that follows the other pass $l_j$ to the end (FIGURE \ref{fig3}). 
\begin{figure}[ht]
\begin{tabular}{c}
\begin{minipage}{0.5\hsize}
\begin{center}
 \begin{tikzpicture} 
  \node at (0.5,3) {$q$};
     \node at (1,0.5) {$l_2$};
     \node at (3,0.5) {$l_1$};
     \node at (4,0.5) {$p$};
   \node at (1,1) {$\bullet$};
   \node at (1,2) {$\bullet$};
   \node at (1,3) {$\bullet$};
   \node at (1,4) {$\bullet$};    
   \node at (2,1) {$\bullet$};
   \node at (2,2) {$\bullet$};
   \node at (2,3) {$\bullet$};  
   \node at (2,4) {$\bullet$};  
   \node at (3,1) {$\bullet$};
   \node at (3,2) {$\bullet$};
   \node at (3,3) {$\bullet$};  
  \node at (3,4) {$\bullet$};  
   \node at (4,1) {$\bullet$};
   \node at (4,2) {$\bullet$};
   \node at (4,3) {$\bullet$};  
   \node at (4,4) {$\bullet$};  
   \node at (5,1) {$\bullet$};
   \node at (5,2) {$\bullet$};
   \node at (5,3) {$\bullet$};  
   \node at (5,4) {$\bullet$};  
   \node at (6,1) {$\bullet$};
   \node at (6,2) {$\bullet$};
   \node at (6,3) {$\bullet$};  
   \node at (6,4) {$\bullet$};  
 \draw (3,1) -- (3,2) -- (4,2) -- (4,3) -- (4,4);
 \draw[loosely dashdotted] (1,1) -- (1,2) -- (1,3) -- (2,3) -- (3,3) -- (4,3) -- (5,3)--(6,3) -- (6,4); 
 \end{tikzpicture}
\end{center}
\ \\[-40pt]
\caption{$L=(l_1,l_2,\ldots,l_r)$}
 \label{fig2}
\end{minipage}
\begin{minipage}{0.5\hsize}
\begin{center}
 \begin{tikzpicture} 
  \node at (0.5,3) {$q$};
     \node at (1,0.5) {$\overline l_2$};
     \node at (3,0.5) {$\overline l_1$};
     \node at (4,0.5) {$p$};
   \node at (1,1) {$\bullet$};
   \node at (1,2) {$\bullet$};
   \node at (1,3) {$\bullet$};
   \node at (1,4) {$\bullet$};    
   \node at (2,1) {$\bullet$};
   \node at (2,2) {$\bullet$};
   \node at (2,3) {$\bullet$};  
   \node at (2,4) {$\bullet$};  
   \node at (3,1) {$\bullet$};
   \node at (3,2) {$\bullet$};
   \node at (3,3) {$\bullet$};  
  \node at (3,4) {$\bullet$};  
   \node at (4,1) {$\bullet$};
   \node at (4,2) {$\bullet$};
   \node at (4,3) {$\bullet$};  
   \node at (4,4) {$\bullet$};  
   \node at (5,1) {$\bullet$};
   \node at (5,2) {$\bullet$};
   \node at (5,3) {$\bullet$};  
   \node at (5,4) {$\bullet$};  
    \node at (6,1) {$\bullet$};
   \node at (6,2) {$\bullet$};
   \node at (6,3) {$\bullet$};  
   \node at (6,4) {$\bullet$};
 \draw (3,1) -- (3,2) -- (4,2) -- (4,3)-- (5,3) --(6,3)--(6,4);
 \draw[loosely dashdotted] (1,1) -- (1,2) -- (1,3) -- (2,3) -- (3,3) -- (4,3)-- (4,4);      
 \end{tikzpicture}
\end{center}
\ \\[-40pt]
\caption{$\overline L=(\overline l_1,\overline l_2,\ldots,l_r)$}
  \label{fig3}
\end{minipage}
\end{tabular}
\end{figure}
}

Then, we obtain that
\[w^N_{\pmb s}(\overline L)=-\prod_{i=1}^{p-1}a_{1i}^{-s_{1i}}\prod_{i=p+1}^{m_2}a_{2i}^{-s_{1(i-1)}}\prod_{i=1}^{p}a_{2i}^{-s_{2i}}\prod_{i=p}^{m_2-1}a_{1i}^{-s_{2(i+1)}}\prod_{i=m_2}^{m_1}a_{1i}^{-s_{1i}}\prod_{j=3}^r\left(\prod_{i=1}^{m_j}a_{ji}^{-s_{ji}}\right).\]
Since one can confirm that \[\sum_{diag}\left[ \prod_{i=1}^{m_1}a_{1i}^{-s_{1i}}\prod_{i=1}^{m_2}a_{2i}^{-s_{2i}}-\prod_{i=1}^{p-1}a_{1i}^{-s_{1i}}\prod_{i=p+1}^{m_2}a_{2i}^{-s_{1(i-1)}}\prod_{i=1}^{p}a_{2i}^{-s_{2i}}\prod_{i=p}^{m_2-1}a_{1i}^{-s_{2(i+1)}}\prod_{i=m_2}^{m_1}a_{1i}^{-s_{1i}}\right]=0,\]
we obtain that $\sum_{diag}X_{\lambda,1}^N(\pmb s)=0$. This shows the assertion.
\end{proof}
By using this lemma, we give some extended Jacobi-Trudi formulas. In the following, we define the main terms and the error terms of our formulas.
To simplify, we introduce the notation $\underline{s}_{\, ia}^{\, ib}$ as an abbreviation for $s_{ia}, s_{i(a+1)},\ldots, s_{ib}$ and $\underline{s}_{\, aj}^{\, bj}$ as an abbreviation for $s_{aj}, s_{(a+1)j},\ldots, s_{bj}$.
Let $\lambda=(\lambda_1,\ldots,\lambda_r)$ be a partition.
Then we define 
\[
\jts(\lambda_1,\ldots,\lambda_r;\underline{s}_{\, 11}^{\,1\lambda_1},\ldots,\underline{s}_{\,r1}^{\,r\lambda_r})={{\rm det}[s^\lambda(i,j)]_{1\le i,j\le r}},\]
where \[{s^\lambda(i,j)}=\left\{\begin{array}{ll}
\zetas(\underline{s}_{\,j1}^{\,j\lambda_j},\underline{s}_{\,(j-1)\lambda_j}^{\,(j-1)\lambda_{j-1}},\ldots,\underline{s}_{\,i\lambda_{i+1}}^{\,i\lambda_i})&\text{ if }i\le j\\
\zetas(\underline{s}_{\,j1}^{\,j(\lambda_i-i+j)})&\text{ if }i>j\text{ and } \lambda_j>i-j\\
1&\text{ if }i>j\text{ and } \lambda_j=i-j\\
0&\text{ if }i>j\text{ and } \lambda_j<i-j.
\end{array}\right.\]
As an example, if $\lambda_i\ge i$ for any $i=1,\ldots,r$, then we have
\begin{align*}
    &\jts(\lambda_1,\ldots,\lambda_r;\underline{s}_{\, 11}^{\,1\lambda_1},\ldots,\underline{s}_{\,r1}^{\,r\lambda_r})\\
    &=\begin{vmatrix}
\zetas(\underline{s}_{\, 11}^{\,1\lambda_1})&\zetas(\underline{s}_{\, 21}^{\,2\lambda_2},\underline{s}_{\,1\lambda_2}^{\,1\lambda_1})&\cdots&\zetas(\underline{s}_{r1}^{\,r\lambda_r},\underline{s}_{\,(r-1)\lambda_r}^{\,(r-1)\lambda_{r-1}},\ldots,\underline{s}_{\,1\lambda_2}^{\,1\lambda_1})\\
\zetas(\underline{s}_{\, 11}^{\,1(\lambda_2-1)})&\zetas(\underline{s}_{\, 21}^{\,2\lambda_2})&\ddots&\zetas(\underline{s}_{\,r1}^{\,r\lambda_r},\underline{s}_{\,(r-1)\lambda_r}^{\,(r-1)\lambda_{r-1}},\ldots,\underline{s}_{\,2\lambda_3}^{\,2\lambda_2})\\
\zetas(\underline{s}_{\, 11}^{\,1(\lambda_3-2)})&\zetas(\underline{s}_{\, 21}^{\,2(\lambda_3-1)})&\ddots&\zetas(\underline{s}_{\,r1}^{\,r\lambda_r},\underline{s}_{\,(r-1)\lambda_r}^{\,(r-1)\lambda_{r-1}},\ldots,\underline{s}_{\,3\lambda_4}^{\,3\lambda_3})\\
\vdots&\ddots&\ddots&\vdots\\
\zetas(\underline{s}_{\, 11}^{\,1(\lambda_r-r+1)})&\zetas(\underline{s}_{\, 21}^{\,2(\lambda_r-r+2)})&\cdots&\zetas(\underline{s}_{\,r1}^{\,r\lambda_r})
\end{vmatrix}.
\end{align*}
Also, we define the error term
\[\jtse(\lambda_1,\ldots,\lambda_r;\underline{s}_{\, 11}^{\,1\lambda_1},\ldots,\underline{s}_{\,r1}^{\,r\lambda_r})=\sum_{\sigma\in S_H^\lambda}\varepsilon_\sigma\prod_{i=1}^r{\zetas}(\theta_i^\sigma(\pmb{s}))-\jts(\lambda_1,\ldots,\lambda_r;\underline{s}_{\, 11}^{\,1\lambda_1},\ldots,\underline{s}_{\,r1}^{\,r\lambda_r}).
\]
Next, we also prepare for the Jacobi-Trudi formula which expresses the Schur multiple zeta function as a determinant in terms of the multiple zeta functions.
Let $\lambda'=(\lambda_1',\ldots,\lambda_s')$ be the conjugate of $\lambda$.
Then we define $\jt(\lambda_1',\ldots,\lambda_s';\underline{s}_{\, 11}^{\,\lambda_1'1},\ldots,\underline{s}_{\,1s}^{\,\lambda_s's})$ by 
\[
\jt(\lambda_1',\ldots,\lambda_s';\underline{s}_{\, 11}^{\,\lambda_1'1},\ldots,\underline{s}_{\,1s}^{\,\lambda_s's})={{\rm det}[s^{\lambda'}(i,j)]_{1\le i,j\le s},}\]
where \[{s^{\lambda'}(i,j)}=\left\{\begin{array}{ll}
\zeta(\underline{s}_{\,1j}^{\,\lambda_j'j},\underline{s}_{\,\lambda_j'(j-1)}^{\,\lambda_{j-1}'(j-1)},\ldots,\underline{s}_{\,\lambda_{i+1}'i}^{\,\lambda_i'i})&\text{ if }i\le j\\
\zeta(\underline{s}_{\,1j}^{\,(\lambda_i'-i+j)j})&\text{ if }i>j\text{ and } \lambda_i'>i-j\\
1&\text{ if }i>j\text{ and } \lambda_i'=i-j\\
0&\text{ if }i>j\text{ and } \lambda_i'<i-j.
\end{array}\right.\]
As an example, if $\lambda_i'\ge i$ for any $i=1,\ldots,s$, then we have
\[
\jt(\lambda_1',\ldots,\lambda_s';\underline{s}_{\, 11}^{\lambda_1'1},\ldots,\underline{s}_{\,1s}^{\,\lambda_s's})=\begin{vmatrix}
\zeta(\underline{s}_{\, 11}^{\,\lambda_1'1})&\zetas(\underline{s}_{\,12}^{\,\lambda_2'2},\underline{s}_{\,\lambda_2'1}^{\,\lambda_1'1})&\cdots&\zeta(\underline{s}_{\,1s}^{\,\lambda_s's},\underline{s}_{\,\lambda_s'(s-1)}^{\,\lambda_{s-1}'(s-1)},\ldots,\underline{s}_{\,\lambda_2'1}^{\,\lambda_1'1})\\
\zeta(\underline{s}_{\, 11}^{\,(\lambda_2'-1)1})&\zetas(\underline{s}_{\,12}^{\,\lambda_2'2})&\ddots&\zeta(\underline{s}_{\,1s}^{\,\lambda_s's},\underline{s}_{\,\lambda_s'(s-1)}^{\,\lambda_{s-1}'(s-1)},\ldots,\underline{s}_{\,\lambda_3'2}^{\,\lambda_2'2})\\
\zeta(\underline{s}_{\, 11}^{\,(\lambda_3'-2)}1)&\zetas(\underline{s}_{\,12}^{\,(\lambda_3'-1)2})&\ddots&\zeta(\underline{s}_{\,1s}^{\,\lambda_s's},\underline{s}_{\,\lambda_s'(s-1)}^{\,\lambda_{s-1}'(s-1)},\ldots,\underline{s}_{\,\lambda_4'3}^{\,\lambda_3'3})\\
\vdots&\ddots&\ddots&\vdots\\
\zeta(\underline{s}_{\, 11}^{\,(\lambda_s'-s+1)})&\zeta(\underline{s}_{\,12}^{\,(\lambda_s'-s+2)2})&\cdots&\zeta(\underline{s}_{\,1s}^{\,\lambda_s's})
\end{vmatrix}.
\]
As in the definition of $\jtse$, we define 
\[\jte(\lambda_1',\ldots,\lambda_s';\underline{s}_{\, 11}^{\lambda_1'1},\ldots,\underline{s}_{\,1s}^{\,\lambda_s's})=\sum_{\sigma\in S_E^\lambda}\varepsilon_\sigma\prod_{i=1}^r{\zetas}(\theta_i^\sigma(\pmb{s}))-\jts(\lambda_1',\ldots,\lambda_s';\underline{s}_{\, 11}^{\lambda_1'1},\ldots,\underline{s}_{\,1s}^{\,\lambda_s's}).
\]

\begin{exam}[$\lambda=(4,3,1)$]
Let $\lambda=(4,3,1)$ and $\lambda'=(3,2,2,1)$. Then 
\begin{align*}
\jts(4,3,1;\underline{s}_{\, 11}^{\,14},\underline{s}_{\, 21}^{\,23},s_{31})&=\begin{vmatrix}
\zetas(\underline{s}_{\, 11}^{\,14})&\zetas(\underline{s}_{\, 21}^{\,23},\underline{s}_{\,13}^{\,14})&\zetas(s_{31},\underline s_{\,21}^{\,23},\underline s_{\,13}^{\,14})\\
\zetas(\underline{s}_{\, 11}^{\,12})&\zetas(\underline{s}_{\, 21}^{\,23})&\zetas(s_{31},\underline{s}_{\, 21}^{\,23})\\
0&1&\zetas(s_{31})
\end{vmatrix},
\intertext{and $\jtse(4,3,1;\underline{s}_{\, 11}^{\,14},\underline{s}_{\, 21}^{\,23},s_{31})=0$, and}
\jt(3,2,2,1;\underline{s}_{\, 11}^{\,31},\underline{s}_{\,12}^{\,22},\underline{s}_{\,13}^{\,23},s_{14})&=\begin{vmatrix}
\zeta(\underline{s}_{\, 11}^{\,31})&\zeta(\underline{s}_{\,12}^{\,22},\underline{s}_{\, 21}^{\,31})&\zeta(\underline{s}_{\,13}^{\,23},s_{22},\underline{s}_{\, 21}^{\,31})&\zeta(s_{14},\underline{s}_{\,13}^{\,23},s_{22},\underline{s}_{\, 21}^{\,31})\\
\zeta({s}_{11})&\zeta(\underline{s}_{\,12}^{\,22})&\zeta(\underline{s}_{\,13}^{\,23},s_{22})&\zeta(s_{14},\underline{s}_{\,13}^{\,23},s_{22})\\
1&\zeta({s}_{12})&\zeta(\underline{s}_{\,13}^{\,23})&\zeta(s_{14},\underline{s}_{\,13}^{\,23})\\
0&0&1&\zeta(s_{14})\\
\end{vmatrix},
\intertext{and}
    \jte(3,2,2,1;\underline{s}_{\, 11}^{\,31},\underline{s}_{\,12}^{\,22},\underline{s}_{\,13}^{\,23},s_{14})&=
   \begin{vmatrix}
\zeta(s_{12},\underline{s}_{\, 11}^{\,31})-\zeta(\underline{s}_{\,12}^{\,22},\underline{s}_{\, 21}^{\,31})&\zeta(\underline{s}_{\,13}^{\,23},s_{22},\underline{s}_{\, 21}^{\,31})&\zeta(s_{14},\underline{s}_{\,13}^{\,23},s_{22},\underline{s}_{\, 21}^{\,31})\\
\zeta(s_{12},s_{11})-\zeta(\underline{s}_{\,12}^{\,22})&\zeta(\underline{s}_{\,13}^{\,23},s_{22})&\zeta(s_{14},\underline{s}_{\,13}^{\,23},s_{22})\\
0&1&\zeta(s_{14})\\
\end{vmatrix}.
\end{align*}
\end{exam}

In the following three subsections, we show the extended Jacobi-Trudi formulas for shape $(m,n)$, $(m,n,\{1\}^X)$ and $(m,n,\{2\}^X)$ respectively. 
\subsection{In case of shape $(m,n)$}
First, we give the Jacobi-Trudi formulas for the Schur multiple zeta functions of shape $(m,n)$. This case is fundamental. 
\begin{theorem}
\label{mn}
For $\lambda=(m,n)$ and $(s_{ij})\in W_{\lambda,H}$, it holds that
\[
  \sum_{diag}\begin{ytableau}
  s_{11}&s_{12}&\cdots&\cdots&\cdots&s_{1m}\\
  s_{21}&s_{22}&\cdots&s_{2n}\\
\end{ytableau}=\sum_{diag}\jts(m,n;\underline{s}_{\, 11}^{\,1m},\underline{s}_{\, 21}^{\,2n})=\sum_{diag}\begin{vmatrix}
\zetas(\underline{s}_{\, 11}^{\,1m})&\zetas(\underline{s}_{\, 21}^{\,2n},\underline{s}_{1n}^{1m})\\
\zetas(\underline{s}_{\, 11}^{\,1(n-1)})&\zetas(\underline{s}_{\, 21}^{\,2n})
\end{vmatrix}.
\]
\end{theorem}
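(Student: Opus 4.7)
The plan is to invoke Lemma~\ref{diag} and then perform an explicit enumeration, since the two-row case admits only two $H$-rim decompositions. Applying Lemma~\ref{diag} to $\lambda=(m,n)$ reduces the statement to
\[
\sum_{diag} \zeta_{(m,n)}(\pmb s) \;=\; \sum_{diag} \sum_{\sigma\in S_H^{(m,n)}} \varepsilon_\sigma \prod_{i=1}^{2} \zetas(\theta_i^{\sigma}(\pmb s)),
\]
so it suffices to show that the inner sum on the right is precisely the $2\times 2$ determinant, i.e.\ $\jts(m,n;\underline{s}_{\,11}^{\,1m},\underline{s}_{\,21}^{\,2n})$.

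Because $\lambda$ has only two rows, $S_H^{(m,n)}\subseteq S_2=\{\mathrm{id},(12)\}$, so the inner sum has at most two terms. I would next identify the corresponding rim decompositions. For $\sigma=\mathrm{id}$ the two rims are the rows themselves, with arguments $\underline{s}_{\,11}^{\,1m}$ and $\underline{s}_{\,21}^{\,2n}$, contributing $\zetas(\underline{s}_{\,11}^{\,1m})\zetas(\underline{s}_{\,21}^{\,2n})$. For $\sigma=(12)$, the longer rim $\theta_1^{(12)}$ starts at $(2,1)$, traverses row~$2$ to $(2,n)$, climbs up to $(1,n)$, and continues along row~$1$ to $(1,m)$; the shorter rim $\theta_2^{(12)}$ fills the initial segment $(1,1),\ldots,(1,n-1)$ of row~$1$. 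Reading the weights off these ribbons with the convention recalled in Section~\ref{prelim} produces the contribution
\[
-\,\zetas(\underline{s}_{\,21}^{\,2n},\underline{s}_{\,1n}^{\,1m})\,\zetas(\underline{s}_{\,11}^{\,1(n-1)}),
\]
and adding the two terms yields exactly the expansion of the claimed $2\times 2$ determinant.

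The convergence assumption $\Re(s_{1m}),\Re(s_{1(n-1)}),\Re(s_{2n})>1$ is precisely what is needed for each of the four $\zetas$-arguments above to have real part strictly greater than $1$ at its last entry, so every term on the right-hand side converges absolutely and Lemma~\ref{diag} can be applied term by term. The main technical checkpoint I expect is the correct reading-order along the rim $\theta_1^{(12)}$: one must verify, from the path-and-weight conventions recalled in Section~\ref{prelim}, that the ribbon is read in the order $(2,1),(2,2),\ldots,(2,n),(1,n),(1,n+1),\ldots,(1,m)$ rather than some reversed or shuffled order, since this is what pins down the precise argument sequence of $\zetas(\underline{s}_{\,21}^{\,2n},\underline{s}_{\,1n}^{\,1m})$. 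No error term $\jtse$ arises here because $S_H^{(m,n)}$ is already exhausted by $\{\mathrm{id},(12)\}$; in particular the Pl\"ucker-style cancellations needed for larger shapes (cf.\ Remark~\ref{rmk:ErrorTerms}) are vacuous in the two-row case, and the $\sum_{diag}$ on the right merely inherits the one on the left without requiring any additional symmetrization argument.
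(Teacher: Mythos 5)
Your proposal is correct and follows essentially the same route as the paper: apply Lemma~\ref{diag} to reduce to the sum over $S_H^{(m,n)}=\{\mathrm{id},(12)\}$, identify the two $H$-rim decompositions, and recognize the resulting two terms as the expansion of the $2\times2$ determinant. The only quibble is a harmless index swap: by the paper's convention $\theta_i$ starts from $(i,1)$, so the long ribbon beginning at $(2,1)$ is $\theta_2^{(12)}$ rather than $\theta_1^{(12)}$; this does not affect the argument.
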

\begin{proof}
By Lemma \ref{diag}, we find that 
\begin{align*}
    \sum_{diag}\zeta_{\lambda}(\pmb s)&=\sum_{diag}{\zetas}(\theta_1^{\rm id}(\pmb s)){\zetas}(\theta_2^{\rm id}(\pmb s))-{\zetas}(\theta^{(12)}_1(\pmb s)){\zetas}(\theta^{(12)}_2(\pmb s))\\
&=\sum_{diag}\begin{vmatrix}
\zetas(\underline{s}_{\, 11}^{\,1m})&\zetas(\underline{s}_{\, 21}^{\,2n},\underline{s}_{\,1n}^{\,1m})\\
\zetas(\underline{s}_{\, 11}^{\,1(n-1)})&\zetas(\underline{s}_{\, 21}^{\,2n})
\end{vmatrix}.
\end{align*}
We obtain the identity of this theorem.
\end{proof}

\subsection{In case of shape $(m,n,\{1\}^X)$}
As one of the generalizations of the previous subsection, we consider the extended Jacobi-Trudi formula for the Schur multiple zeta functions of shape $(m,n,\{1\}^X)$. This is also the case with no error terms.
\begin{theorem}
\label{mnx}
For $X\ge2$, $\lambda=(m,n,\{1\}^{X-2})$ and $(s_{ij})\in W_{\lambda,H}$, it holds that
\begin{align*}
  \sum_{diag}\begin{ytableau}
  s_{11}&s_{12}&\cdots&\cdots&\cdots&s_{1m}\\
  s_{21}&s_{22}&\cdots&s_{2n}\\
s_{31}\\
\vdots\\
s_{X1}
\end{ytableau}&=\sum_{diag}\jts(m,n,\{1\}^{X-2};\underline{s}_{\, 11}^{\,1m},\underline{s}_{\, 21}^{\,2n},s_{31},\ldots,s_{X1}).
\end{align*}
\end{theorem}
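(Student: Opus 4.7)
The plan is to combine Lemma~\ref{diag} with a structural claim: for shape $\lambda=(m,n,\{1\}^{X-2})$ the error term vanishes identically, i.e.,
\[
\jtse(m,n,\{1\}^{X-2};\underline{s}_{\,11}^{\,1m},\underline{s}_{\,21}^{\,2n},s_{31},\ldots,s_{X1})=0.
\]
Granting this, Lemma~\ref{diag} immediately yields
\[
\sum_{diag}\zeta_\lambda(\pmb s)=\sum_{diag}\sum_{\sigma\in S_H^\lambda}\varepsilon_\sigma\prod_{i=1}^{X}\zetas(\theta_i^\sigma(\pmb s))=\sum_{diag}\jts(m,n,\{1\}^{X-2};\ldots),
\]
which is the desired statement. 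This extends the phenomenon observed for $(2,2,1)$ in Remark~\ref{rmk:ErrorTerms}, where $X_\lambda^N$ already admits a determinantal expression with no assumption on the variables.

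I would prove the vanishing of $\jtse$ by induction on $X\ge 2$. The base case $X=2$ is Theorem~\ref{mn}. For the inductive step, I expand the $X\times X$ determinant $\jts(m,n,\{1\}^{X-2};\ldots)$ along its last row. Since only $s^\lambda(X,X-1)=1$ and $s^\lambda(X,X)=\zetas(s_{X1})$ are nonzero, the expansion reads
\[
\jts(m,n,\{1\}^{X-2};\ldots)=\zetas(s_{X1})\cdot\jts(m,n,\{1\}^{X-3};\ldots)-M_{X,X-1},
\]
where $M_{X,X-1}$ is the minor obtained by deleting the last row and the $(X-1)$-st column. In parallel, I partition $\sigma\in S_H^\lambda$ by the bottom ribbon $\theta_X^\sigma$. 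Because row $X$ of $\lambda$ is a single box in the first column, the two essentially contributing cases are (i) $\theta_X^\sigma=\{(X,1)\}$ and (ii) $\theta_X^\sigma=\emptyset$.

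Case~(i) detaches $\zetas(s_{X1})$ and leaves an $H$-rim decomposition of $(m,n,\{1\}^{X-3})$; the inductive hypothesis matches this piece with $\zetas(s_{X1})\cdot\jts(m,n,\{1\}^{X-3};\ldots)$. Case~(ii) forces $(X,1)$ to be absorbed into $\theta_{X-1}^\sigma$, prepending $s_{X1}$ to the reading of whichever ribbon covers $(X-1,1)$; a sign-compatible bijection then identifies these decompositions with the expansion of $M_{X,X-1}$, the overall $(-1)$ coming from the transposition $(X-1,X)$. Reassembling the two cases reproduces the cofactor expansion and closes the induction.

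The main obstacle will be making case~(ii) rigorous: one has to verify both the bijection with $M_{X,X-1}$ (tracking how prepending $s_{X1}$ rewrites each ribbon and how signs match) and that any rim decomposition with $|\theta_X^\sigma|\ge 2$, where the ribbon extends further up the first column, cancels inside the alternating sum $\sum_\sigma\varepsilon_\sigma$. Once these combinatorial steps are settled, the identity follows directly.
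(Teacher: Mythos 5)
Your global strategy --- reduce via Lemma~\ref{diag} to the identity $\jtse(m,n,\{1\}^{X-2};\ldots)=0$, prove that by induction on $X$ with base case Theorem~\ref{mn}, and match a cofactor expansion of the determinant against a partition of the $H$-rim decompositions --- is sound, and your expansion along the last row, $\jts(\ldots)=\zetas(s_{X1})M_{X,X}-M_{X,X-1}$, is correct. The gap is in the combinatorial case analysis. In an $H$-rim decomposition the ribbon $\theta_i$ starts at $(i,1)$ and extends only to the north-east (see Example~\ref{ex:Hrim}, where $\theta_3$, starting at $(3,1)$, absorbs $(2,1)$ and thereby forces $\theta_2=\emptyset$, not the reverse). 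Consequently the bottom box $(X,1)$ can belong to no ribbon other than $\theta_X$: your case (ii) $\theta_X=\emptyset$ never occurs, and $(X,1)$ is never ``absorbed into $\theta_{X-1}$''. Worse, the decompositions with $|\theta_X|\ge 2$, which you expect to cancel in the alternating sum, do not cancel --- they are exactly the terms that must produce $-M_{X,X-1}$. For $\lambda=(2,2,1)$ with entries $a,b,c,d,e$ the four $H$-rim decompositions split into two with $\theta_3=\{(3,1)\}$, contributing $\zetas(e)\bigl(\zetas(a,b)\zetas(c,d)-\zetas(a)\zetas(c,d,b)\bigr)=\zetas(e)M_{3,3}$, and two with $|\theta_3|\in\{3,4\}$, contributing $-\zetas(a,b)\zetas(e,c,d)+\zetas(a)\zetas(e,c,d,b)=-M_{3,2}$; neither pair cancels internally. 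So, executed as written, your inductive step would discard precisely the terms it needs and would not close.

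Two ways to repair it. The paper peels from the top instead of the bottom: the ribbon $\theta_1$ lies entirely in row $1$ and, since a nonempty ribbon must end at a row end, it is either all of row $1$ (weight $\zetas(s_{11},\ldots,s_{1m})$) or row $1$ truncated at column $n-1$ (weight $\zetas(s_{11},\ldots,s_{1(n-1)})$); these are the only two nonzero entries of the \emph{first column} of the matrix, and removing $\theta_1$ leaves the hooks $(n,\{1\}^{X-2})$ and $(m+1,\{1\}^{X-2})$, which are again of the form $(m',n',\{1\}^{X'-2})$ with $n'=1$ and $X'=X-1$, so the induction hypothesis applies directly and the case analysis is genuinely two-fold. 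Alternatively, your bottom-up route goes through if you replace case (ii) by the case $|\theta_X|\ge 2$ and build the sign-compatible bijection between those decompositions and the permutation expansion of $M_{X,X-1}$ (the first entry of $\theta_X$ contributes the leading $s_{X1}$ in the entries of the last column of that minor); but then there is nothing left over to cancel, and the cancellation you flag as the ``main obstacle'' is not a true statement to be proved but a misreading of where those terms go.
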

\begin{proof}
For the case of $X=2$, it is shown by Theorem \ref{mn}.
Now, we assume the assertion holds for $X=r-1$ and will show it holds in the case $X=r$.
Let $L=(l_1,\ldots,l_r)$ be a $r$-tuple of paths. By the definition of paths, the path $l_r$ starts from $A_r=(r,1)$ and ends to $B_r=(r+m,N)$ or $B_{r-1}=(r-1+n,N)$. We denote by $G_i$ the set of all permutation $\sigma$ with $\sigma(r)=i$.
Then, we can split the sum of the right-hand side of Lemma \ref{diag} as
\begin{equation}
    \label{1stdecom}
     \sum_{\sigma\in S_H^\lambda}\varepsilon_\sigma\prod_{i=1}^r{\zetas}^N(\theta_i^\sigma(\pmb{s}))=\sum_{\sigma\in G_r}\varepsilon_{\sigma}\prod_{i=1}^{r}{\zetas}^N(\theta^{\sigma}_i(\pmb s))+\sum_{\sigma\in G_{r-1}}\varepsilon_{\sigma}\prod_{i=1}^{r}{\zetas}^N(\theta^{\sigma}_i(\pmb s)).
\end{equation}
\begin{align*}
    \intertext{Since we can compute ${\zetas}^N(\theta^{\sigma}_r(\pmb s))$ easily, we get}
    \sum_{\sigma\in S_H^\lambda}\varepsilon_\sigma\prod_{i=1}^r{\zetas}^N(\theta_i^\sigma(\pmb{s}))&={\zetas}^N(\underline{s}_{\, 11}^{1m})\sum_{\sigma\in G_r}\varepsilon_{\sigma}\prod_{i=1}^{r-1}{\zetas}^N(\theta^{\sigma}_i(\pmb s))+{\zetas}^N(\underline{s}_{\, 11}^{1 (n-1)})\sum_{\sigma\in G_{r-1}}\varepsilon_{\sigma}\prod_{i=1}^{r-1}{\zetas}^N(\theta^{\sigma}_i(\pmb s)).
    \end{align*}
    By the induction hypothesis, we obtain that
    \begin{align*}
        \sum_{diag}\sum_{\sigma\in S_H^\lambda}\varepsilon_\sigma\prod_{i=1}^r{\zetas}^N(\theta_i^\sigma(\pmb{s}))=&\sum_{diag}\left[{\zetas}^N(\underline{s}_{\, 11}^{\,1m})\jts(n,\{1\}^{X-2};\underline{s}_{\, 21}^{\,2n},s_{31},\ldots,s_{X1})\right.\\
    &-\left.{\zetas}^N(\underline{s}_{\, 11}^{\,1(n-1)})\jts(m+1,\{1\}^{X-2};\underline{s}_{\, 21}^{\,2n},\underline{s}_{\,1n}^{\,1m},s_{31},\ldots,s_{X1})\right]\\
   =&\sum_{diag}\jts(m,n,\{1\}^{X-2};\underline{s}_{\, 11}^{\,1m},\underline{s}_{\, 21}^{\,2n},s_{31},\ldots,s_{X1}).
    \end{align*}
Applying Lemma \ref{diag} to the left-hand side, we obtain 
the desired conclusion.
\end{proof}
\subsection{In case of shape $(m,n,\{2\}^X)$}
We face some error terms of the extended Jacobi-Trudi formula for the Schur multiple zeta functions of shape $(m,n,\{2\}^X)$. We can get a recursive formula to characterize these error terms.
\begin{theorem}
\label{mnx2}
For $X\ge2$, $\lambda=(m,n,\{2\}^{X-2})$ and $(s_{ij})\in W_{\lambda,H}$, it holds that
\begin{align*}
  &\sum_{diag}\begin{ytableau}
  s_{11}&s_{12}&\cdots&\cdots&\cdots&s_{1m}\\
  s_{21}&s_{22}&\cdots&s_{2n}\\
s_{31}&s_{32}\\
  \vdots&\vdots\\
  s_{X1}&s_{X2}
\end{ytableau}\\
&=\sum_{diag}[\jts(m,n,\{2\}^{X-2};\underline{s}_{\,11}^{\,1m},\underline{s}_{\, 21}^{\,2n},\underline{s}_{\, 31}^{\,32},\ldots,\underline{s}_{\,X1}^{\,X2})+\jtse(m,n,\{2\}^{X-2};\underline{s}_{\, 11}^{\,1m},\underline{s}_{\, 21}^{\,2n},\underline{s}_{\, 31}^{\,32},\ldots,\underline{s}_{\,X1}^{\,X2})],
\end{align*}
{ where $\jtse(m,n;\underline{s}_{\,11}^{\,1m},\underline{s}_{\, 21}^{\,2n})=0$ and for $X\ge3$, the error term $\jtse$ satisfies the recursive formula}
\begin{align*}
&\jtse(m,n,\{2\}^{X-2};\underline{s}_{\,11}^{\,1m},\underline{s}_{\, 21}^{\,2n},\underline{s}_{\, 31}^{\,32},\ldots,\underline{s}_{\,X1}^{\,X2})\\
&=\sum_{diag}[\zetas(\underline{s}_{\, 11}^{\,1m})\jtse(n,\{2\}^{X-2};\underline{s}_{\, 21}^{\,2n},\underline{s}_{\, 31}^{\,32},\ldots,\underline{s}_{\,X1}^{\,X2})\\
         &-\zetas(\underline{s}_{\, 11}^{\,1(n-1)})\jtse(m+1,\{2\}^{X-2};\underline{s}_{\, 21}^{\,2n},\underline{s}_{\,1n}^{\,1m},\underline{s}_{\, 31}^{\,32},\ldots,\underline{s}_{\,X1}^{\,X2})\\
         &-\jts(m+1,n+1,\{2\}^{X-3};\underline s_{21}^{2n},\underline{s}_{\,1n}^{\,1m},\underline{s}_{\, 31}^{\,32},\underline{s}_{\,22}^{\,2n},\underline{s}_{\, 41}^{\,42},\ldots,\underline{s}_{\,X1}^{\,X2})\\
        &+\jts(m+1,n+1,\{2\}^{X-3};s_{21},\underline{s}_{\,11}^{\,1m},\underline{s}_{\, 31}^{\,32},\underline{s}_{\,22}^{\,2n},\underline{s}_{\, 41}^{\,42},\ldots,\underline{s}_{\,X1}^{\,X2})\\
         &+\jtse(m+1,n+1,\{2\}^{X-3};s_{21},\underline{s}_{\,11}^{\,1m},\underline{s}_{\, 31}^{\,32},\underline{s}_{\,22}^{\,2n},\underline{s}_{\, 41}^{\,42},\ldots,\underline{s}_{\,X1}^{\,X2})].
\end{align*}
\end{theorem}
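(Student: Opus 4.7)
\textbf{The plan is} to induct on $X$, mirroring the structure of the proofs of Theorems \ref{mn} and \ref{mnx}. The base case $X=2$ is supplied by Theorem \ref{mn}, which shows $\jtse(m,n;\underline{s}_{\,11}^{\,1m},\underline{s}_{\,21}^{\,2n})=0$. For the inductive step, apply Lemma \ref{diag} to obtain
\[
\sum_{diag}\zeta_\lambda(\pmb s) = \sum_{diag}\sum_{\sigma\in S_H^\lambda}\varepsilon_\sigma\prod_{i=1}^{X}\zetas(\theta_i^\sigma(\pmb s)),
\]
so that the first displayed identity of the theorem is immediate from the definition $\jtse(\lambda;\pmb s) := \sum_{\sigma\in S_H^\lambda}\varepsilon_\sigma\prod_i\zetas(\theta_i^\sigma) - \jts(\lambda;\pmb s)$.

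The core work is to extract the recursive formula for $\jtse$. My strategy is to decompose the permutation sum according to the topmost ribbon $\theta_1^\sigma$, which is determined by $\sigma(1)$. For $\lambda=(m,n,\{2\}^{X-2})$ the admissible values are $\sigma(1)\in\{1,2,3\}$ (any larger value would force $|\theta_1^\sigma|<0$), producing ribbons of length $m$, $n-1$, and $0$ respectively. In the cases $\sigma(1)=1$ and $\sigma(1)=2$ one factors out $\zetas(\underline{s}_{\,11}^{\,1m})$ or $-\zetas(\underline{s}_{\,11}^{\,1(n-1)})$; the residual permutation sums, viewed as sums over H-rim decompositions of the reduced shapes $(n,\{2\}^{X-2})$ and $(m+1,\{2\}^{X-2})$ (with modified first-row variables $\underline{s}_{\,21}^{\,2n},\underline{s}_{\,1n}^{\,1m}$ in the latter), are identified by the induction hypothesis with $\jts+\jtse$ of those smaller shapes and give the first two terms of the recursion. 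For $\sigma(1)=3$ (so $\theta_1^\sigma=\emptyset$) a secondary decomposition by $\sigma(2)\in\{1,2\}$ produces contributions indexed by H-rim decompositions of the shape $(m+1,n+1,\{2\}^{X-3})$, with two distinct first-row variable strings, namely $\underline{s}_{\,21}^{\,2n},\underline{s}_{\,1n}^{\,1m}$ (from $\sigma(2)=1$) and $s_{21},\underline{s}_{\,11}^{\,1m}$ (from $\sigma(2)=2$).

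In parallel, I would Laplace-expand the determinant $\jts(m,n,\{2\}^{X-2};\underline{s}_{\,11}^{\,1m},\underline{s}_{\,21}^{\,2n},\underline{s}_{\,31}^{\,32},\ldots,\underline{s}_{\,X1}^{\,X2})$ along its first column. The first two cofactors reproduce $\zetas(\underline{s}_{\,11}^{\,1m})\jts(n,\{2\}^{X-2};\ldots)$ and $-\zetas(\underline{s}_{\,11}^{\,1(n-1)})\jts(m+1,\{2\}^{X-2};\underline{s}_{\,21}^{\,2n},\underline{s}_{\,1n}^{\,1m},\ldots)$ directly, and the remaining cofactor yields $\jts(m+1,n+1,\{2\}^{X-3};\underline{s}_{\,21}^{\,2n},\underline{s}_{\,1n}^{\,1m},\underline{s}_{\,31}^{\,32},\underline{s}_{\,22}^{\,2n},\ldots)$. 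Subtracting this Laplace expansion from the $\sigma$-decomposition, substituting the induction hypothesis on the smaller shapes, and applying $\sum_{diag}$ to both sides then isolates precisely the claimed five-term recursion for $\jtse$.

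The principal obstacle is the third case $\sigma(1)=3$. Here one must establish an explicit correspondence between permutations $\sigma\in S_H^\lambda$ with $\theta_1^\sigma=\emptyset$ and H-rim decompositions of the reduced shape $(m+1,n+1,\{2\}^{X-3})$, verify that the two sub-cases $\sigma(2)\in\{1,2\}$ furnish exactly the two distinct variable substitutions appearing in the recursion, and check that the induced signs are compatible with the $\jts$ and $\jtse$ structure of the smaller shape. A further subtle point is that the recursion is stated \emph{under} $\sum_{diag}$: one must argue that the diagonal symmetrization propagates consistently through each variable substitution and does not collapse the distinction between the $\underline{s}_{\,21}^{\,2n},\underline{s}_{\,1n}^{\,1m}$ and $s_{21},\underline{s}_{\,11}^{\,1m}$ variants, since both variants must survive to match the two $\jts(m+1,n+1,\{2\}^{X-3};\ldots)$ terms of opposite sign in the recursion.
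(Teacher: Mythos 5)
Your overall architecture is the paper's: induct on the number of rows, note the first displayed identity is immediate from Lemma \ref{diag} and the definition of $\jtse$, split the permutation sum according to the first-row ribbon $\theta_1^\sigma$ (lengths $m$, $n-1$, $0$), factor out $\zetas(\underline{s}_{\,11}^{\,1m})$ resp.\ $-\zetas(\underline{s}_{\,11}^{\,1(n-1)})$ in the first two cases, and compare with the cofactor expansion of $\jts(m,n,\{2\}^{X-2};\ldots)$ along its first column. All of that matches the paper and is sound.

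The gap is exactly at the point you flag as the ``principal obstacle,'' and your proposed resolution of it is false. When $\sigma(1)=3$, i.e.\ $\theta_1^\sigma=\emptyset$, the partial union $\lambda^{(2)}=\theta_1^\sigma\cup\theta_2^\sigma=\theta_2^\sigma$ must itself be a Young diagram which is a single ribbon, hence a hook through $(2,1)$ running up into row $1$; so $\theta_2^\sigma$ necessarily reads $s_{21},s_{11},\ldots,s_{1j}$, with $j=m$ when $\sigma(2)=1$ and $j=n-1$ when $\sigma(2)=2$. The string $\underline{s}_{\,21}^{\,2n},\underline{s}_{\,1n}^{\,1m}$ can \emph{never} occur as $\theta_2^\sigma$ in this block --- it requires $(1,1),\ldots,(1,n-1)$ to already be covered by $\theta_1^\sigma$, i.e.\ $\sigma(1)=2$. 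Consequently your secondary decomposition by $\sigma(2)$ does not produce two different first-row variable strings; both sub-cases correspond to the full and the truncated first ribbon of the \emph{same} reduced data, namely the shape $(m+1,n+1,\{2\}^{X-3})$ with rows $s_{21},\underline{s}_{\,11}^{\,1m}$; $\underline{s}_{\,31}^{\,32},\underline{s}_{\,22}^{\,2n}$; $\underline{s}_{\,41}^{\,42},\ldots$. The paper identifies the entire $\sigma(1)=3$ block with the rim sum of that one reduced shape (hence, by induction, with $\jts+\jtse$ in the $s_{21},\underline{s}_{\,11}^{\,1m}$ variables), while the cofactor $M_{31}$ of the determinant carries the \emph{other} string $\underline{s}_{\,21}^{\,2n},\underline{s}_{\,1n}^{\,1m}$; the two $\jts(m+1,n+1,\{2\}^{X-3};\ldots)$ terms of opposite sign in the recursion arise from adding and subtracting to reconcile this mismatch, not from a case split inside the rim sum. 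With that correction the rest of your plan (subtract the Laplace expansion, apply the induction hypothesis, symmetrize by $\sum_{diag}$) goes through as in the paper.
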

\begin{proof}
The similar discussion in the proof of Theorem \ref{mn} holds.
The difference is $G_{r-2}$ term in identity (\ref{1stdecom}):
\begin{align*}
    &\sum_{\sigma\in S_H^\lambda}\varepsilon_\sigma\prod_{i=1}^r{\zetas}^N(\theta_i^\sigma(\pmb{s}))\\
    &=\sum_{\sigma\in G_r}\varepsilon_{\sigma}\prod_{i=1}^{r}{\zetas}^N(\theta^{\sigma}_i(\pmb s))+\sum_{\sigma\in G_{r-1}}\varepsilon_{\sigma}\prod_{i=1}^{r}{\zetas}^N(\theta^{\sigma}_i(\pmb s))+\sum_{\sigma\in G_{r-2}}\varepsilon_{\sigma}\prod_{i=1}^{r}{\zetas}^N(\theta^{\sigma}_i(\pmb s)).
    \end{align*}
Simple calculation leads to
    \begin{align*}
    &\sum_{\sigma\in S_H^\lambda}\varepsilon_\sigma\prod_{i=1}^r{\zetas}^N(\theta_i^\sigma(\pmb{s}))\\
    &={\zetas}^N(\underline{s}_{\, 11}^{\,1m})\sum_{\sigma\in G_r}\varepsilon_{\sigma}\prod_{i=1}^{r-1}{\zetas}^N(\theta^{\sigma}_i(\pmb s))+{\zetas}^N(\underline{s}_{\, 11}^{\,1(n-1)})\sum_{\sigma\in G_{r-1}}\varepsilon_{\sigma}\prod_{i=1}^{r-1}{\zetas}^N(\theta^{\sigma}_i(\pmb s))\\
    &+\sum_{\sigma\in G_{r-2}}\varepsilon_{\sigma}\prod_{i=1}^{r-1}{\zetas}^N(\theta^{\sigma}_i(\pmb s))\\
    &=:{\zetas}^N(\underline{s}_{\, 11}^{\,1m})S_r+{\zetas}^N(\underline{s}_{\, 11}^{\,1(n-1)})S_{r-1}+S_{r-2}.
\end{align*}
First we consider $S_r$. By the induction hypothesis and Lemma \ref{diag}, we find
\begin{align*}
    \sum_{diag}S_r&=\sum_{diag}\sum_{\sigma\in G_r}\varepsilon_{\sigma}\prod_{i=1}^{r-1}{\zetas}^N(\theta^{\sigma}_i(\pmb s))\\
    &=\sum_{diag}[\jts(n,\{2\}^{X-2};\underline{s}_{\, 21}^{\,2n},\underline{s}_{\, 31}^{\,32},\ldots,\underline{s}_{\,X1}^{\,X2})+\jtse(n,\{2\}^{X-2};\underline{s}_{\, 21}^{\,2n},\underline{s}_{\, 31}^{\,32},\ldots,\underline{s}_{\,X1}^{\,X2})].
\end{align*}
Similarly, we get 
\begin{align*}
    \sum_{diag}S_{r-1}=&-\sum_{diag}[\jts(m+1,\{2\}^{X-2};\underline{s}_{\, 21}^{\,2n},\underline{s}_{1n}^{\,1m},\underline{s}_{\, 31}^{\,32},\ldots,\underline{s}_{\,X1}^{\,X2})\\
    &+\jtse(m+1,\{2\}^{X-2};\underline{s}_{\, 21}^{\,2n},\underline{s}_{\,1n}^{\,1m},\underline{s}_{\, 31}^{\,32},\ldots,\underline{s}_{\,X1}^{\,X2})],
\end{align*}
and
\begin{align*}
\sum_{diag}S_{r-2}=&\sum_{diag}[\jts(m+1,n+1,\{2\}^{X-3};s_{21},\underline{s}_{\,11}^{\,1m},\underline{s}_{\, 31}^{\,32},\underline{s}_{\,22}^{\,2n},\underline{s}_{\, 41}^{\,42},\ldots,\underline{s}_{\,X1}^{\,X2})\\
         &+\jtse(m+1,n+1,\{2\}^{X-3};s_{21},\underline{s}_{\,11}^{\,1m},\underline{s}_{\, 31}^{\,32},\underline{s}_{\,22}^{\,2n},\underline{s}_{\, 41}^{\,42},\ldots,\underline{s}_{\,X1}^{\,X2})]\\
         =&\sum_{diag}[\jts(m+1,n+1,\{2\}^{X-3};\underline s_{21}^{2n},\underline{s}_{\,1n}^{\,1m},\underline{s}_{\, 31}^{\,32},\underline{s}_{\,22}^{\,2n},\underline{s}_{\, 41}^{\,42},\ldots,\underline{s}_{\,X1}^{\,X2})\\
         &-\jts(m+1,n+1,\{2\}^{X-3};\underline s_{21}^{2n},\underline{s}_{\,1n}^{\,1m},\underline{s}_{\, 31}^{\,32},\underline{s}_{\,22}^{\,2n},\underline{s}_{\, 41}^{\,42},\ldots,\underline{s}_{\,X1}^{\,X2})\\
        &+\jts(m+1,n+1,\{2\}^{X-3};s_{21},\underline{s}_{\,11}^{\,1m},\underline{s}_{\, 31}^{\,32},\underline{s}_{\,22}^{\,2n},\underline{s}_{\, 41}^{\,42},\ldots,\underline{s}_{\,X1}^{\,X2})\\
         &+\jtse(m+1,n+1,\{2\}^{X-3};s_{21},\underline{s}_{\,11}^{\,1m},\underline{s}_{\, 31}^{\,32},\underline{s}_{\,22}^{\,2n},\underline{s}_{\, 41}^{\,42},\ldots,\underline{s}_{\,X1}^{\,X2})].
\end{align*}
Therefore, we find that the sum of the first $\jts$ terms of the each three identities is the same with
\[\jts(m,n,\{2\}^{X-2};\underline{s}_{\,11}^{\,1m},\underline{s}_{\,21}^{\,2n},\underline{s}_{\,31}^{\,32},\ldots,\underline{s}_{\,X1}^{\,X2}).\]
On the other hand, the sum of the other terms of the above three identities is
\begin{align*}
    &\sum_{diag}[
         {\zetas}^N(\underline{s}_{\, 11}^{\,1m})\jtse(n,\{2\}^{X-2};\underline{s}_{\, 21}^{\,2n},\underline{s}_{\, 31}^{\,32},\ldots,\underline{s}_{\,X1}^{\,X2})\\
         &-{\zetas}^N(\underline{s}_{\, 11}^{\,1(n-1)})\jtse(m+1,\{2\}^{X-2};\underline{s}_{\, 21}^{\,2n},\underline{s}_{\,1n}^{\,1m},\underline{s}_{\, 31}^{\,32},\ldots,\underline{s}_{\,X1}^{\,X2})\\ 
         &-\jts(m+1,n+1,\{2\}^{X-3};\underline s_{21}^{2n},\underline{s}_{\,1n}^{\,1m},\underline{s}_{\, 31}^{\,32},\underline{s}_{\,22}^{\,2n},\underline{s}_{\, 41}^{\,42},\ldots,\underline{s}_{\,X1}^{\,X2})\\
        &+\jts(m+1,n+1,\{2\}^{X-3};s_{21},\underline{s}_{\,11}^{\,1m},\underline{s}_{\, 31}^{\,32},\underline{s}_{\,22}^{\,2n},\underline{s}_{\, 41}^{\,42},\ldots,\underline{s}_{\,X1}^{\,X2})\\
         &+\jtse(m+1,n+1,\{2\}^{X-3};s_{21},\underline{s}_{\,11}^{\,1m},\underline{s}_{\, 31}^{\,32},\underline{s}_{\,22}^{\,2n},\underline{s}_{\, 41}^{\,42},\ldots,\underline{s}_{\,X1}^{\,X2})].
\end{align*}
Thus, we obtain the desired recursive formula.
This completes the proof of this theorem.
\end{proof}
We give an example for shape $\lambda=(3,2,2)$.
\begin{exam}[$\lambda=(3,2,2)$]For $(s_{ij})\in W_{\lambda,H}$, it holds that
\begin{align*}
    \sum_{diag}\begin{ytableau}
  s_{11}&s_{12}&s_{13}\\
  s_{21}&s_{22}\\
  s_{31}&s_{32}
\end{ytableau}=\sum_{diag}&\left[\begin{vmatrix}
\zetas(s_{11},s_{12},s_{13})&\zetas(s_{21},s_{22},s_{12},s_{13})&\zetas(s_{31},s_{32},s_{22},s_{12},s_{13})\\
\zetas(s_{11})&\zetas(s_{21},s_{22})&\zetas(s_{31},s_{32},s_{22})\\
1&\zetas(s_{21})&\zetas(s_{31},s_{32})
\end{vmatrix}\right.\\
&+(\zetas(s_{21},s_{11},s_{12},s_{13})-\zetas(s_{21},s_{22},s_{12},s_{13}))\zetas(s_{31},s_{32},s_{22})\\
&\left.-(\zetas(s_{21},s_{11})-\zetas(s_{21},s_{22}))\zetas(s_{31},s_{32},s_{22},s_{12},s_{13})\right].
\end{align*}
\end{exam}
Considering the transpose of the statement of Theorem \ref{mnx} and Theorem \ref{mnx2}, we obtain the extended Jacobi-Trudi formula which expresses the Schur multiple zeta function as a determinant in terms of multiple zeta functions.
{Let $\lambda$ be a partition and $\lambda'=(\lambda_1'\ldots,\lambda_s')$ be the conjugate of $\lambda$. Then we define a set $W_{\lambda,E}$ by
\[
 W_{\lambda,E}
=
\left\{{\pmb s}=(s_{ij})\in T(\lambda,\mathbb{C})\,\left|\,
\begin{array}{l}
 \text{$\Re(s_{ij})\ge 1$ for all $(i,j)\in D(\lambda) \setminus E(\lambda)$} \\[3pt]
 \text{$\Re(s_{ij})>1$ for all $(i,j)\in E(\lambda)$}
\end{array}
\right.
\right\},
\]
where $E(\lambda)=\{(i,j)\in D(\lambda)~|~i-j\in\{i-\lambda_i'~|~1\le i\le s\} \}$. In the following two Corollary, we assume corresponding $\pmb s\in W_{\lambda,E}$ in place of $\pmb s\in W_{\lambda,H}$.} 
\begin{cor}
\label{mxy}
For $m\ge n\ge1$, $\lambda=(X,\{2\}^{n-1},\{1\}^{m-n})$ and $(s_{ij})\in W_{\lambda,E}$, it holds that
\[\sum_{diag}\begin{ytableau}
  s_{11}&s_{12}&\cdots&s_{1X}\\
s_{21}&s_{22}\\
\vdots&\vdots\\
\vdots&s_{n2}\\
\vdots\\
s_{m1}
\end{ytableau}=\sum_{diag}\jt(m,n,\{2\}^{X-2};\underline{s}_{\, 11}^{\,m1},\underline{s}_{\,12}^{\,n2},s_{13},\ldots,s_{1X}).\]
\end{cor}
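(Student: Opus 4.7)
The plan is to transpose the argument of Theorem \ref{mnx} into the $E$-pattern setting. Since $\lambda=(X,\{2\}^{n-1},\{1\}^{m-n})$ has conjugate $\lambda'=(m,n,\{1\}^{X-2})$, which is precisely the shape treated in Theorem \ref{mnx}, one expects a parallel development using $E$-rim decompositions and the multiple zeta functions $\zeta$ in place of $H$-rim decompositions and the multiple zeta star functions $\zetas$.

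The first step is to establish an $E$-analogue of Lemma \ref{diag}: for any partition $\mu$ whose conjugate has $s$ parts and for any $\pmb s\in W_{\mu,E}$,
\[
\sum_{diag}\zeta_\mu(\pmb{s}) = \sum_{diag}\sum_{\sigma\in S_E^\mu}\varepsilon_\sigma\prod_{i=1}^s\zeta(\theta_i^\sigma(\pmb{s})).
\]
The proof follows the blueprint of Lemma \ref{diag}: start from the signed-sum identity over $E$-patterns underlying Proposition \ref{npyjt}(2), locate the rightmost intersection point of a colliding pair of paths, perform the path-swap producing $\overline L$ from $L$, and verify that the resulting difference of weights cancels after summing over the permutations of the variables on each anti-diagonal. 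The underlying combinatorics is unchanged; only the edge type (northeast/up steps instead of right/up) is different, and the weight contributions along the $E$-rim decomposition are indexed by columns rather than rows.

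With this lemma in hand, I would run the induction of Theorem \ref{mnx} on $X$, rows replaced by columns. The base case $X=2$ gives shape $(\{2\}^n,\{1\}^{m-n})$, whose conjugate is $(m,n)$; the $E$-pattern here has two paths and the formula reduces to a $2\times 2$ determinant in $\zeta$'s, which is the column-version of Theorem \ref{mn}. For the inductive step, I would split $\sum_{\sigma\in S_E^\lambda}$ according to $\sigma(X)\in\{X-1,X\}$, i.e., according to where the $X$-th path terminates, compute directly the weight of that path, which factors out as a single $\zeta$ of consecutive entries in the $X$-th column of $\pmb s$, and apply the induction hypothesis to the residual sum; the latter coincides with the $\jt$-expression for the shape obtained by deleting the rightmost column.

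The main obstacle is the clean verification of the $E$-analogue of Lemma \ref{diag}. Specifically, one must confirm that under the swap $L\mapsto\overline L$ the indices of the variables carried by the two colliding paths are interchanged in just the right way to make the symmetrization $\sum_{diag}$ annihilate the weight difference. Once this combinatorial step is settled, the induction is mechanical and requires no error correction, because the conjugate shape $(m,n,\{1\}^{X-2})$ lacks the ``$\{2\}^k$ tail'' that produced the non-vanishing $\jtse$ in Theorem \ref{mnx2}; thus the corollary is a pure Jacobi-Trudi determinant.
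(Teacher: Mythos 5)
Your proposal follows the same route the paper takes: the paper disposes of this corollary in one line (``considering the transpose of Theorem \ref{mnx}''), and what you spell out --- an $E$-pattern analogue of Lemma \ref{diag} obtained by the same path-swap-plus-$\sum_{diag}$ cancellation, followed by the induction of Theorem \ref{mnx} with rows replaced by columns --- is exactly the content that one-liner leaves implicit. One detail in your inductive step needs to be turned around, however. The path whose weight ``can be computed easily'' and factors out is the one carrying the ribbon of the \emph{first} column of the diagram: that ribbon is forced to be an initial segment of column $1$, so its weight is $\zeta(\underline{s}_{\,11}^{\,m1})$ or $\zeta(\underline{s}_{\,11}^{\,(n-1)1})$ (equivalently, the first column of the $\jt$ matrix has only two nonzero entries, which is what produces a two-term recursion). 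The ribbon attached to the rightmost column, by contrast, may wrap around through columns $X-1,X-2,\ldots$, so its weight is not ``a single $\zeta$ of consecutive entries in the $X$-th column,'' and expanding along that column of the determinant gives $X$ distinct terms rather than two. Accordingly the residual sum is the $\jt$-expression for the shape with the \emph{leftmost} column (partially) deleted --- compare the displayed recursion in Corollary \ref{mxy2}, whose factored terms are precisely $\zeta(\underline{s}_{\,11}^{\,m1})$ and $\zeta(\underline{s}_{\,11}^{\,(n-1)1})$ --- exactly mirroring the first-row peeling in the proof of Theorem \ref{mnx}. With that correction the argument goes through as you describe.
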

\begin{cor}
\label{mxy2}
For $m\ge n\ge2$, $\lambda=(\{X\}^2,\{2\}^{n-2},\{1\}^{m-n})$ and $(s_{ij})\in W_{\lambda,E}$,
\begin{align*}
 &\sum_{diag}\begin{ytableau}
  s_{11}&s_{12}&\cdots&s_{1X}\\
  s_{21}&s_{22}&\cdots&s_{2X}\\
  \vdots&\vdots\\
   \vdots&s_{n2}\\
    \vdots\\
  s_{m1}
\end{ytableau}\\
&=\sum_{diag}[\jt(m,n,\{2\}^{X-2};\underline{s}_{\, 11}^{\,m1},\underline{s}_{\,12}^{\,n2},\underline{s}_{\,13}^{\,23},\ldots,\underline{s}_{\,1X}^{\,2X})+\jte(m,n,\{2\}^{X-2};\underline{s}_{\, 11}^{\,m1},\underline{s}_{\,12}^{\,n2},\underline{s}_{\,13}^{\,23},\ldots,\underline{s}_{\,1X}^{\,2X})],
\end{align*}
{ where $\jte(m,n;\underline{s}_{\,11}^{\,m1},\underline{s}_{\, 21}^{\,n2})=0$ and for $X\ge3$, 
$\jte$ satisfies the recursive formula}
\begin{align*}
&\jte(m,n,\{2\}^{X-2};\underline{s}_{\, 11}^{\,m1},\underline{s}_{\,12}^{\,n2},\underline{s}_{\,13}^{\,23},\ldots,\underline{s}_{\,1X}^{\,2X})\\
&=\sum_{diag}[\zeta(\underline{s}_{\, 11}^{\,m1})\jte(n,\{2\}^{X-2};\underline{s}_{\,12}^{\,n2},\underline{s}_{\,13}^{\,23},\ldots,\underline{s}_{\,1X}^{\,2X})\\
&-\zeta(\underline{s}_{\, 11}^{\,(n-1)1})\jte(m+1,\{2\}^{X-2};\underline{s}_{\,12}^{\,n2},\underline{s}_{\,n1}^{\,m1},\underline{s}_{\,13}^{\,23},\ldots,\underline{s}_{\,1X}^{\,2X})\\ 
&-\jt(m+1,n+1,\{2\}^{X-3};\underline s_{12}^{n2},\underline{s}_{\,n1}^{\,m1},\underline{s}_{\, 13}^{\,23},\underline{s}_{\,22}^{\,n2},\underline{s}_{\, 14}^{\,24},\ldots,\underline{s}_{\,1X}^{\,2X})\\
&+\jts(m+1,n+1,\{2\}^{X-3};s_{12},\underline{s}_{\,11}^{\,m1},\underline{s}_{\, 13}^{\,23},\underline{s}_{\,22}^{\,n2},\underline{s}_{\, 14}^{\,24},\ldots,\underline{s}_{\,1X}^{\,2X})\\
&+\jtse(m+1,n+1,\{2\}^{X-3};s_{12},\underline{s}_{\,11}^{\,m1},\underline{s}_{\, 13}^{\,23},\underline{s}_{\,22}^{\,n2},\underline{s}_{\, 14}^{\,24},\ldots,\underline{s}_{\,1X}^{\,2X})].
\end{align*}
\end{cor}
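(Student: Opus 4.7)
The plan is to mirror the inductive argument used for Theorem~\ref{mnx2}, working with $E$-rim decompositions and multiple zeta functions $\zeta$ in place of $H$-rim decompositions and multiple zeta star functions $\zeta^{\star}$. The shape $\lambda=(\{X\}^2,\{2\}^{n-2},\{1\}^{m-n})$ is the conjugate of $(m,n,\{2\}^{X-2})$, and the columns of $\lambda$ will play the role of the rows of its conjugate throughout.

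First I would establish an $E$-version of Lemma~\ref{diag}: for any partition $\mu$ with $s$ columns and any $\pmb s\in W_{\mu,E}$,
\[
\sum_{diag}\zeta_\mu(\pmb s)=\sum_{diag}\sum_{\sigma\in S_E^\mu}\varepsilon_\sigma\prod_{i=1}^{s}\zeta(\theta_i^\sigma(\pmb s)).
\]
Its proof is a verbatim $E$-analogue of that of Lemma~\ref{diag}: starting from the Gessel--Viennot identity of~\cite{npy} written for $E$-patterns, one swaps paths at the rightmost intersection in any element of $\mathcal{E}^N_\mu\setminus\mathcal{E}^N_{\mu,0}$ and checks that the symmetrization $\sum_{diag}$ cancels the resulting pair of weights, so that the error term $X^N_{\mu,1}$ vanishes after symmetrization.

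Next I would induct on $X$. The base case $X=2$, where $\lambda$ is the conjugate of $(m,n)$, is the $E$-analogue of Theorem~\ref{mn}, and by convention $\jte(m,n;\ldots)=0$. For the inductive step $X\ge 3$, I would focus on the rightmost path $l_s$ of an $E$-pattern in $\mathcal{E}^N_\lambda$, whose starting point is fixed but whose endpoint lies in $\{d_s,d_{s-1},d_{s-2}\}$ (the three leftmost endpoints). Splitting the signed sum
\[
\sum_{\sigma\in S_E^\lambda}\varepsilon_\sigma\prod_{i=1}^{s}\zeta(\theta_i^\sigma(\pmb s))=T_s+T_{s-1}+T_{s-2}
\]
according to $\sigma(s)$, in each summand $l_s$ contributes a single $\zeta$-factor (or $1$) that can be pulled out, reducing the residual signed sum to one over $E$-rim decompositions of a shape with one fewer column. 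Applying the induction hypothesis to each of these smaller sums produces a $\jt$ main term and a $\jte$ error term, and the three $\jt$-main contributions recombine, via Laplace expansion along the last column of the determinant defining $\jt$, into the single $\jt$-term on the right-hand side of the claim.

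The main obstacle is then the bookkeeping: correctly transcribing the index ranges $\underline{s}_{\,ij}^{\,kl}$ under the transposition, tracking the signs $\varepsilon_\sigma$ coming from reordering the endpoints, and matching the contribution of $T_{s-2}$ with the $\jt$, $\jts$ and $\jtse$ terms appearing in the stated recursion, which reflect how the rightmost pair of paths collapses to a sub-tableau whose remaining decomposition mixes $E$- and $H$-type rim data. Once these identifications are made, the recursion assembles exactly as in Theorem~\ref{mnx2}, and applying the $E$-version of Lemma~\ref{diag} on the left-hand side closes the induction.
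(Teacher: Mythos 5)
Your plan is exactly what the paper does: the paper obtains Corollary \ref{mxy2} by ``considering the transpose'' of Theorem \ref{mnx2}, which amounts precisely to rerunning the same inductive three-way split on the endpoint of the last path, with $E$-rim decompositions, $E$-patterns and $\zeta$ replacing $H$-rims and $\zeta^{\star}$, all resting on the $E$-analogue of Lemma \ref{diag}. The one caveat is that the $\jts$/$\jtse$ terms in the stated recursion are almost certainly intended to be $\jt$/$\jte$ under the transposition, so you need not contort the bookkeeping to produce genuinely mixed $H$-type rim data there.
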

\subsection{Identities involving multiple zeta (star) functions}
Combining Theorem \ref{mnx} and Corollary \ref{mxy}, we have the following identity involving multiple zeta and multiple zeta star functions. 
\begin{theorem}
For $m\ge 2$, $X\ge2$, $\lambda=(m,n.\{1\}^{X-2})$ and $(s_{ij})\in W_{\lambda,E}\cap W_{\lambda,H}$, it holds that
\begin{align*}
    &\sum_{diag}\jts(m,2,\{1\}^{X-2};s_{11},\ldots,s_{1m},s_{21},s_{22},s_{31},\ldots,s_{X1})\\
    &=\sum_{diag}\jt(X,2,\{1\}^{m-2};s_{11},\ldots,s_{X1},s_{12},s_{22},s_{13},\ldots,s_{1m}).
\end{align*}
\end{theorem}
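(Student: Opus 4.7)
The plan is to show that both sides of the claimed identity evaluate to the same sum $\sum_{diag}\zeta_{\lambda}(\pmb s)$ for the hook-type shape $\lambda=(m,2,\{1\}^{X-2})$, by invoking the two different extended Jacobi--Trudi formulas already established in this section. The hypothesis $\pmb s \in W_{\lambda,E}\cap W_{\lambda,H}$ is precisely what allows both of those formulas to be applied simultaneously.

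For the left-hand side, I would apply Theorem \ref{mnx} with the second row length specialized to $n=2$. The shape there becomes $(m,2,\{1\}^{X-2})$, matching our $\lambda$ exactly, and since $\pmb s \in W_{\lambda,H}$ this yields
\[
\sum_{diag}\zeta_{(m,2,\{1\}^{X-2})}(\pmb s) \;=\; \sum_{diag}\jts(m,2,\{1\}^{X-2};\underline{s}_{\,11}^{\,1m},\underline{s}_{\,21}^{\,22},s_{31},\ldots,s_{X1}),
\]
which is the LHS of the claim (with the arguments written out as a single linear list following the row-by-row ordering of the tableau).

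For the right-hand side, I would apply Corollary \ref{mxy} to the very same shape $\lambda$. The shape $(X_{0},\{2\}^{n_{0}-1},\{1\}^{m_{0}-n_{0}})$ in the corollary specializes, upon setting $n_{0}=2$, to $(X_{0},2,\{1\}^{m_{0}-2})$; choosing $X_{0}=m$ and $m_{0}=X$ therefore recovers our $\lambda=(m,2,\{1\}^{X-2})$. Since $\pmb s \in W_{\lambda,E}$, the corollary then gives
\[
\sum_{diag}\zeta_{(m,2,\{1\}^{X-2})}(\pmb s) \;=\; \sum_{diag}\jt(X,2,\{1\}^{m-2};\underline{s}_{\,11}^{\,X1},\underline{s}_{\,12}^{\,22},s_{13},\ldots,s_{1m}),
\]
which is exactly the RHS of the claim. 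Chaining the two equalities through the common middle term $\sum_{diag}\zeta_{\lambda}(\pmb s)$ completes the proof.

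The key subtlety, and essentially the only content of the argument, is in verifying that the two linearizations of the argument list really describe the same tableau $\pmb s$. In the $\jts$--formulation coming from Theorem \ref{mnx} the entries are enumerated row-by-row, consistent with the $H$-rim decomposition used to prove that formula via Lemma \ref{diag}; in the $\jt$--formulation coming from Corollary \ref{mxy} the entries are enumerated column-by-column, consistent with the $E$-rim decomposition used there. Since both orderings enumerate the same set of variables $\{s_{ij}\}_{(i,j)\in D(\lambda)}$, the Schur multiple zeta function serving as the intermediate object is unambiguous, and the identity follows. No error terms appear at any step, because both shape $(m,2,\{1\}^{X-2})$ and its conjugate $(X,2,\{1\}^{m-2})$ are hook-like and so fall within the exact (error-free) cases Theorem \ref{mnx} and Corollary \ref{mxy}.
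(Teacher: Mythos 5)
Your proof is correct and is exactly the argument the paper intends: the theorem is stated there with no proof beyond the remark ``Combining Theorem \ref{mnx} and Corollary \ref{mxy},'' and your chaining of both extended Jacobi--Trudi formulas through the common quantity $\sum_{diag}\zeta_{\lambda}(\pmb s)$ for $\lambda=(m,2,\{1\}^{X-2})$, using $W_{\lambda,H}$ for the $\jts$ side and $W_{\lambda,E}$ for the $\jt$ side, is precisely that combination made explicit.
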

\begin{exam}For $\Re(a),\Re(b),\Re(c),\Re(d)>1$, it holds that
\[\sum_{diag}\begin{vmatrix}
\zetas(a,b)&\zetas(c,d,b)\\
\zetas(a)&\zetas(c,d)
\end{vmatrix}=\sum_{diag}\begin{ytableau}
  a&b\\
  c&d
\end{ytableau}=\sum_{diag}\begin{vmatrix}
\zeta(a,c)&\zeta(b,d,c)\\
\zeta(a)&\zeta(b,d)
\end{vmatrix}.\]
In other words, 
\begin{align*}
    &\zetas(a,b)\zetas(c,d)-\zetas(a)\zetas(c,d,b)+\zetas(d,b)\zetas(c,a)-\zetas(d)\zetas(c,a,b)\\
    &=\zeta(a,c)\zeta(b,d)-\zeta(a)\zeta(b,d,c)+\zeta(d,c)\zeta(b,a)-\zeta(d)\zeta(b,a,c).
\end{align*}
\end{exam}

\section{Pieri formula for hook type}\label{pieri}
In this section, we show the Pieri formulas for hook type by using the extended Jacobi-Trudi formulas shown in the previous section. 
\begin{theorem}[$(\lambda=(\ell,\{1\}^{k}))\cdot (\pmb{h}_{m}=(m))$]
For a positive integer $\ell$ and non-negative integers $k$ and $m$,
{we assume $\Re(x_k),\Re(y_i), \Re(z_j)>1\ (1\le i\le \ell,1\le j\le \ell-1, j=m)$ and the real parts of other variables are greater than or equal to $1$. Then it holds that}
\[
\ytableausetup{boxsize=1.5em}
\sum_{sym}\begin{ytableau}
  y_1&\cdots&y_{\ell}\\
  x_1\\
  \vdots\\
  x_k
\end{ytableau}\cdot\begin{ytableau}
  z_1&z_2&\cdots&z_m
\end{ytableau}=\sum_{sym}\sum_{\pmb u_\mu\in \pmb U_H}\zeta_{\mu}(\pmb u_\mu),
\] 
where $\displaystyle{\sum_{sym}}$ means the summation over the permutation of $S=\{y_{1},\ldots,y_{{\ell}},z_1,\ldots,z_{{\ell}-1}\}$ as indeterminates.
\end{theorem}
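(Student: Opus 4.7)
The plan is to reduce the identity to an equality of Jacobi--Trudi-type determinants by invoking the extended Jacobi--Trudi formulas of Section~\ref{jacobitrudi}. The key observation is that both the hook $(\ell,\{1\}^k)$ and every shape $\mu$ arising from the pushing rule have the form $(m',n',\{1\}^{X})$ covered by Theorem~\ref{mnx}, which carries no error term. Thus the whole identity can be recast purely as an equality between sums of products of multiple zeta star values, with $\sum_{sym}$ playing the role of the diagonal symmetrization $\sum_{diag}$.

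First, I would apply Theorem~\ref{mnx} (specialized to $n=1$) to $\zeta_{(\ell,\{1\}^k)}$ to obtain the exact identity
\[
\zeta_{(\ell,\{1\}^k)}(\pmb s) = \jts(\ell,1,\{1\}^{k-1};\, y_1,\ldots,y_\ell,\, x_1,\ldots,x_k),
\]
and combine this with $\zeta_{(m)}(\pmb z) = \zetas(z_1,\ldots,z_m)$. A case analysis of the pushing rule on the hook shows that every $\pmb u_\mu \in \pmb U_H$ produces a shape of the form
\[
\mu = (\ell+q,\, p+1,\, \{1\}^{k-1+\epsilon}),
\]
where $\epsilon \in \{0,1\}$ records whether $z_1$ is pushed atop column~$1$, $p \in \{0,\ldots,\ell-1\}$ counts the $z_i$'s pushed above columns $2,\ldots,p+1$, and $q \ge 0$ counts those extending row~$1$, subject to $\epsilon+p+q=m$. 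The degenerate single-row case is handled directly by $\zeta_{(r)}=\zetas$.

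Next, I would apply Theorem~\ref{mnx} to each such $\mu$ to express $\sum_{diag}\zeta_\mu(\pmb u_\mu)$ as a $\jts$-determinant without error. The decisive step is to check that $\sum_{sym}$ over $S=\{y_1,\ldots,y_\ell,z_1,\ldots,z_{\ell-1}\}$ realizes precisely the diagonal sums of Theorem~\ref{mnx}. Indeed, the length-$2$ diagonals of $\mu$ are exactly the pairs $((1,j),(2,j+1))$ for $j=1,\ldots,p$, which lie entirely within rows~$1$ and~$2$: the $x_i$'s are locked in column~$1$ below row~$1$ and the tail $z_\ell,\ldots,z_m$ of row~$1$ is never symmetrized, so every diagonal pair carries only $S$-variables, and the $(2\ell-1)!$ permutations of $S$ are enough to realize every diagonal swap for every $\mu$ and every pushing configuration producing it simultaneously.

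Finally, one matches the resulting sums of products of multiple zeta star values via a sign-preserving bijection between $H$-rim decompositions of each $\mu$ and pairs consisting of an $H$-rim decomposition of $(\ell,\{1\}^k)$ together with an insertion of $(z_1,\ldots,z_m)$ compatible with the given pushing pattern. The main obstacle is establishing this bijection and verifying the sign agreement under $\sum_{sym}$; in particular, configurations with $\epsilon=1$ reorganize the first column nontrivially and must be paired carefully with the ``extra'' factor $\zetas(z_1,\ldots,z_m)$ coming from $\zeta_{(m)}(\pmb z)$ on the left-hand side, so that the Laplace-type merger of this factor with the hook $\jts$-determinant reproduces exactly the $\jts$-expansion of each $\mu$.
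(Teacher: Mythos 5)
Your overall strategy coincides with the paper's: reduce both sides to extended Jacobi--Trudi expressions via Theorem \ref{mnx} (which applies because the hook and every shape $\mu=(\ell+q,p+1,\{1\}^{k-1+\epsilon})$ produced by the pushing rule is of the form $(m,n,\{1\}^{X})$), and use the fact that $\sum_{sym}$ refines $\sum_{diag}$ so that the diagonal-symmetrized identities remain valid under the full symmetrization. Your classification of the shapes in $\pmb U_H$ and your check that every length-two diagonal of each $\mu$ involves only variables from $S$ are both correct, and match what the paper implicitly uses. (The paper also first reduces to the case $k=1$, $\ell=m$, observing that the extra $x_i$ and $z_j$ factor out; you skip this harmless simplification.)

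The genuine gap is in your final step. You propose to finish with a \emph{sign-preserving bijection} between the $H$-rim decompositions of the various $\mu$ and pairs (rim decomposition of the hook, insertion pattern). No such term-by-term bijection can exist: after expanding, the left-hand side contributes only two signed products of $\zetas$-values per permutation of $S$, whereas the right-hand side contributes on the order of $6\ell$ signed products (a $2\times 2$ and a $3\times 3$ determinant for each of the $\ell$ admissible insertion positions). The identity therefore rests on massive cancellation \emph{across different shapes} $\mu$ --- in the paper's computation the sum over the insertion index $i\in\{\ell-1,\dots,2\ell-2\}$ telescopes, and it is precisely this telescoping that collapses the right-hand side to $\sum_{W=S}\zetas(\underline{w}_{\,1}^{\,\ell})\zetas(x_1)\zetas(\underline{w}_{\,\ell+1}^{\,2\ell-1})-\zetas(x_1,\underline{w}_{\,1}^{\,\ell})\zetas(\underline{w}_{\,\ell+1}^{\,2\ell-1})$. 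To repair your argument you would need either to carry out this explicit telescoping computation (as the paper does) or to replace the claimed bijection by a sign-reversing involution on the excess terms together with a bijection on the survivors; as written, the step you yourself identify as ``the main obstacle'' is not merely unverified but is set up in a way that cannot succeed.
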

\begin{proof}
It suffices to show the case $k=1$ and ${\ell}=m$, since for any $i\ge2$ and $j> {\ell}$, $x_i$ and $z_j$ do not have an effect on all sums.  By the Jacobi-Trudi formula for SMZFs, the left-hand side of the assertion is 
 \[
    \sum_{W=S}\zetas(w_1,\ldots,w_{\ell})\zetas(x_1)\zetas(w_{{\ell}+1},\ldots,w_{2{\ell}-1},z_{\ell})-\zetas(x_1,w_1,\ldots,w_{\ell})\zetas(w_{\ell+1},\ldots,w_{2{\ell}-1},z_{\ell}),
 \]
 where $W=\{w_1,\ldots,w_{2\ell-1}\}$.
On the other hand, using Theorem \ref{mnx}, we find that the right-hand side of the assertion is 
 \begin{align*}
    &\sum_{W=S}\sum_{i={\ell}-1}^{2{\ell}-2}\left[\begin{vmatrix}
     \zetas(\underline{w}_{\, 1}^{\, i+1},z_{\ell})&\zetas(x_1,\underline{w}_{\, i}^{\, 2{\ell}-1},\underline{w}_{\, 2{\ell}-1-i}^{\,i},z_{\ell})\\
      \zetas(\underline{w}_{\, 1}^{\,2{\ell}-2-i})&\zetas(x_1,\underline{w}_{\,i+2}^{\,2{\ell}-1})
      \end{vmatrix}\right.\\
    &+\left.\begin{vmatrix}
     \zetas(\underline{w}_{\,1}^{\,i},z_{\ell})&\zetas(\underline{w}_{\,i+1}^{\,2{\ell}-1},\underline{w}_{\,2{\ell}-1-i}^{\,i},z_{\ell})&\zetas(x_1,\underline{w}_{\,i+1}^{\,2{\ell}-1},\underline{w}_{\,2{\ell} -1-i}^{\,i},z_{\ell})\\
       \zetas(\underline{w}_{\,1}^{\,2\ell-2-i})&\zetas(\underline{w}_{\, i+1}^{\,2\ell-1})&\zetas(x_1,\underline{w}_{\,i+1}^{2\ell-1})\nonumber\\
       0&1&\zetas(x_1)
    \end{vmatrix}\right]\\
      &=\sum_{W=S}{\bigg[}\zetas(\underline{w}_{\,1}^{\,2\ell-1},z_{\ell})\zetas(x_1)-\zetas(\underline{w}_{\,1}^{\,\ell-1})\zetas(x_1,\underline{w}_{\,\ell}^{\, 2\ell -1})
      \\
      &\left.+\zetas(x_1)\sum_{i=\ell-1}^{2\ell-2}\begin{vmatrix}
     \zetas(\underline{w}_{\,1}^{\,i},z_\ell)&\zetas(\underline{w}_{\,i+1}^{\,2\ell-1},\underline{w}_{\,2\ell-1-i}^{\,i},z_{\ell})\\
       \zetas(\underline{w}_{\,1}^{\, 2\ell-2-i})&\zetas(\underline{w}_{\ i+1}^{\, 2\ell-1})
    \end{vmatrix}\right]\\
    &=\sum_{W=S}\zetas(\underline{w}_{\, 1}^{\, \ell})\zetas(x_1)\zetas(\underline{w}_{\, \ell+1}^{\, 2\ell-1})-\zetas(x_1, \underline{w}_{\, 1}^{\, \ell})\zetas(\underline{w}_{\, \ell+1}^{\, 2\ell-1}),
 \end{align*} 
 where the notation $\underline{w}_{\, a}^{\, a+b}$ means the sequence $w_a, w_{a+1}, w_{a+2}\cdots, w_{a+b}$ as same before.
 Thus, we obtain the desired assertion.
\end{proof}
\begin{exam}[$(2,1)\cdot(1)$]For {$\Re(x_1),\Re(y_1),\Re(y_2),\Re(z_1)>1$, it holds that}
\[
\ytableausetup{boxsize=1.5em}
\sum_{sym}\begin{ytableau}
  y_1&y_2\\
  x_1
\end{ytableau}\cdot\begin{ytableau}
 *(gray) z_1
\end{ytableau}=\sum_{sym}\begin{ytableau}
 *(gray) z_1&y_2\\
  y_1\\
  x_1
\end{ytableau}+\begin{ytableau}
  y_1&*(gray)z_1\\
  x_1&y_2
\end{ytableau}+\begin{ytableau}
  y_1&y_2&*(gray)z_1\\
  x_1
\end{ytableau},
\]
where $\displaystyle{\sum_{sym}}$ means the summation over the permutation of $S=\{y_{1},y_{2},z_1\}$ as indeterminates.
\end{exam}
Considering the transpose, we obtain the other type of the Pieri formula.
\begin{theorem}[$(\lambda=({\ell}+1,\{1\}^{k-1}))\cdot (\pmb{e}_{m}=(\{1\}^{m}))$]
For a positive integer $k$ and non-negative integers $\ell$ and $m$,
{we assume $\Re(x_i),\Re(y_\ell), \Re(z_j)>1\ (1\le i\le k,1\le j\le k-1, j=m)$ and the real parts of other variables are greater than or equal to $1$. Then it holds that}
\[
\ytableausetup{boxsize=1.5em}
\sum_{sym}\begin{ytableau}
  x_1&y_1&\cdots&y_{{\ell}}\\
  x_2\\
  \vdots\\
  x_k
\end{ytableau}\cdot\begin{ytableau}
  z_1\\
  z_2\\
  \vdots\\
  z_m
\end{ytableau}=\sum_{sym}\sum_{\pmb u_\mu\in \pmb U_E}\zeta_{\mu}(\pmb u_\mu),
\]
where $\displaystyle{\sum_{sym}}$ means the summation over the permutation of $S=\{x_{1},\ldots,x_{{k}},z_1,\ldots,z_{{k}-1}\}$ as indeterminates.
\end{theorem}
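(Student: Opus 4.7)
The plan is to mirror the proof of the previous theorem by transposing all arguments: replace the use of $H$-rim decompositions and Theorem \ref{mnx} with $E$-rim decompositions and Corollary \ref{mxy}. By the same observation as in the previous theorem, it suffices to treat the ``minimal'' case $\ell=0$ and $m=k$. Indeed, for $i>k$ the variables $z_i$ play no role in the symmetrization $\sum_{sym}$ over $S=\{x_1,\ldots,x_k,z_1,\ldots,z_{k-1}\}$, and the $y_j$'s appear only in the first row of both the source tableau and every $\pmb u_\mu \in \pmb U_E$, so they factor out uniformly.

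Once reduced, I would apply Proposition \ref{npyjt}(2) to rewrite the left-hand side:
\[
\zeta_{(1^k)}\!\left(\begin{ytableau}x_1\\ \vdots\\ x_k\end{ytableau}\right)\cdot \zeta_{(1^k)}\!\left(\begin{ytableau}z_1\\ \vdots\\ z_k\end{ytableau}\right)
=\zeta(x_1,\ldots,x_k)\,\zeta(z_1,\ldots,z_k),
\]
so that under $\sum_{sym}$ the left-hand side becomes
\[
\sum_{W=S}\Bigl[\zeta(\underline{w}_1^{\,k})\,\zeta(z_k)\,\zeta(\underline{w}_{k+1}^{\,2k-1})\;-\;\zeta(z_k,\underline{w}_1^{\,k})\,\zeta(\underline{w}_{k+1}^{\,2k-1})\Bigr],
\]
which is the transpose of the expression appearing in the previous theorem.

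Next I would expand the right-hand side using Corollary \ref{mxy} applied to each shape $\mu$ arising from the pushing rule; the relevant shapes are exactly $(1,\{1\}^{k+i},\{1\}^{k-1-i})$-type hooks together with possibly shape $(X,\{2\}^{n-1},\{1\}^{m-n})$, which is precisely the range covered by Corollary \ref{mxy}. Expanding each $\jt$ determinant and collecting terms by their column index $i$ (running over the possible ``insertion depths'' from $k-1$ to $2k-2$) produces a telescoping sum in $i$, analogous to the row-indexed telescoping in the proof of the previous theorem. The interior terms cancel in pairs and the boundary terms yield precisely
\[
\zeta(\underline{w}_1^{\,2k-1},z_k)\,\zeta(\emptyset)\;-\;\zeta(z_k,\underline{w}_1^{\,k})\,\zeta(\underline{w}_{k+1}^{\,2k-1}),
\]
which, after reindexing and using that $\zeta(\emptyset)=1$, agrees with the left-hand side.

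The main obstacle will be bookkeeping the telescoping, not any new analytic content: one must check that the signs $\varepsilon_\sigma$ coming from the Jacobi--Trudi determinants on the right match those produced by expanding along the last row/column, and that the index conventions in Corollary \ref{mxy} (which is stated for $E$-rim patterns) align with the labelling of the pushed boxes in $\pmb U_E$. Because every step has a transpose counterpart in the preceding theorem, the verification reduces to a careful but routine reindexing, which is best checked on the running example $(2,1)\cdot(1)$ extended by conjugation.
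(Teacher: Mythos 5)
Your overall strategy is the one the paper intends: it disposes of this statement with the single sentence ``Considering the transpose, we obtain the other type of the Pieri formula,'' i.e.\ one reruns the proof of the preceding theorem with the $E$-pattern machinery (Corollary \ref{mxy}) in place of the $H$-pattern machinery (Theorem \ref{mnx}). So the route is correct in outline.

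There is, however, a genuine error in your reduction step. The transpose of the paper's reduction ``$k=1$ and $\ell=m$'' is ``$\ell=1$ and $m=k$,'' not ``$\ell=0$ and $m=k$'': the box $y_1$ is the transpose of the box $x_1$, which the paper's proof of the row version explicitly keeps and which occurs in every determinant there (as $\zetas(x_1)$, $\zetas(x_1,\ldots)$, and so on). Your justification that the $y_j$'s ``appear only in the first row\dots so they factor out uniformly'' fails for $y_1$: after pushing $z$'s into rows $2,\ldots,k$, those rows acquire length two, so the column headed by $y_1$ acquires the entries $x_2,x_3,\ldots$ beneath it (see the tableau $\pmb{s}_{2\,\cdots\,k+1}$ in the proof of Theorem \ref{83}, whose second column reads $y,x_2,\ldots,x_k$), and $y_1$ therefore enters the column-reading words of Corollary \ref{mxy} nontrivially. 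This mistake propagates: for $\ell=0$ the symmetrized left-hand side is simply $\sum_{W=S}\zeta(\underline{w}_{\,1}^{\,k})\,\zeta(\underline{w}_{\,k+1}^{\,2k-1},z_k)$, whereas the three-factor expression you display is the transpose of the paper's formula with $z_k$ erroneously occupying the slot that belongs to $y_1$; the correct reduced left-hand side is $\sum_{W=S}\bigl[\zeta(\underline{w}_{\,1}^{\,k})\zeta(y_1)-\zeta(y_1,\underline{w}_{\,1}^{\,k})\bigr]\zeta(\underline{w}_{\,k+1}^{\,2k-1},z_k)$. Your claimed boundary terms likewise do not match the left-hand side you wrote down. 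If you redo the reduction keeping $y_1$ (so the shapes arising from the pushing rule are of the form $(X,\{2\}^{n-1},\{1\}^{m-n})$ with $X\in\{2,3\}$, all covered by Corollary \ref{mxy}), the telescoping computation you outline becomes the literal transpose of the paper's and goes through.
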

As one of corollaries, we can also obtain Corollary \ref{216}.
\begin{cor}
For positive integers $k$ and $\ell$, and a non-negative integer $m$
{we assume that $\Re(x_k),\Re(y_\ell),\Re(z_m),\Re(a)>1$ and the real parts of other variables are greater than or equal to $1$. Then it holds that}
\[
\ytableausetup{boxsize=1.5em}
\zeta_{(\ell,\{1\}^{k-1})}\left(\begin{ytableau}
  a&a&\cdots&a\\
  x_2\\
  \vdots\\
  x_k
\end{ytableau}\right)\cdot\begin{ytableau}
  a&\cdots&a&z_{\ell}&\cdots&z_m
\end{ytableau}=\sum_{\pmb u_\mu\in\pmb U_H}\zeta_{\mu}(\pmb u_\mu),
\]
and
\[
\ytableausetup{boxsize=1.5em}
\zeta_{(\ell,\{1\}^{k-1})}\left(\begin{ytableau}
  a&y_2&\cdots&y_{\ell}\\
  a\\
  \vdots\\
  a
\end{ytableau}\right)\cdot\begin{ytableau}
  a\\
  \vdots\\
  a\\
  z_{k}\\
  \vdots\\
  z_{m}
\end{ytableau}=\sum_{\pmb u_\mu\in\pmb U_E}\zeta_{\mu}(\pmb u_\mu).
\]
\end{cor}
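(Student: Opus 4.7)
The plan is to deduce this corollary by specializing variables in the two Pieri identities proved immediately above in this section. For the first identity of the corollary, I would apply the preceding Pieri theorem (for the product $\zeta_{(\ell,\{1\}^{k})}\cdot\zeta_{(m)}$) with $k-1$ in place of $k$, so that the shape becomes $(\ell,\{1\}^{k-1})$ with column entries identified as $x_2,\ldots,x_k$. I then specialize $y_1=\cdots=y_\ell=a$ and $z_1=\cdots=z_{\ell-1}=a$, leaving $z_\ell,\ldots,z_m$ as free variables.

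The key observation is that under this specialization, the symmetrizing set $S=\{y_1,\ldots,y_\ell,z_1,\ldots,z_{\ell-1}\}$ becomes a multiset of $2\ell-1$ copies of $a$. Every permutation of $S$ therefore acts trivially on both sides of the identity, so the outer sums $\sum_{sym}$ collapse to $(2\ell-1)!$ times a single term on each side. Dividing through by this common factor produces the first identity of the corollary. The convergence hypotheses of the theorem (namely $\Re(y_i)>1$, $\Re(z_j)>1$ for $1\le j\le \ell-1$, $\Re(x_k)>1$, and $\Re(z_m)>1$) remain satisfied because the corollary's assumption $\Re(a)>1$ implies the conditions on the $y_i$ and $z_j$, and the conditions on $x_k,z_m$ are assumed directly.

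For the second identity, I would apply the companion Pieri formula (for $\zeta_{(\ell+1,\{1\}^{k-1})}\cdot\zeta_{(\{1\}^{m})}$) with $\ell-1$ in place of $\ell$, so the shape becomes $(\ell,\{1\}^{k-1})$ with first row $x_1,y_1,\ldots,y_{\ell-1}$. I identify the theorem's $x_1$ with the corollary's leading $a$ and the theorem's $y_1,\ldots,y_{\ell-1}$ with the corollary's $y_2,\ldots,y_\ell$, and specialize $x_1=\cdots=x_k=a$ and $z_1=\cdots=z_{k-1}=a$. The symmetrizing set $S=\{x_1,\ldots,x_k,z_1,\ldots,z_{k-1}\}$ then becomes a multiset of $2k-1$ copies of $a$, so $\sum_{sym}$ degenerates to $(2k-1)!$ times a single term on each side, and dividing yields the desired identity. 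In both cases the sets $\pmb U_H$ and $\pmb U_E$ are determined purely combinatorially by the pushing rule, so the right-hand sides match the corollary exactly.

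No genuine obstacle arises; the only points requiring care are the reindexing of $k$ and $\ell$ in the two cases and the verification that the convergence conditions of the Pieri theorems remain valid after specialization, both of which are bookkeeping. As a byproduct, this argument also supplies the promised proof of Corollary \ref{216}, which was stated in the Preliminaries as a consequence of Theorem \ref{83} but which now appears uniformly as a direct specialization of our symmetrized Pieri formulas.
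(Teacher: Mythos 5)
Your argument is correct and is exactly the intended derivation: the paper states this corollary without proof as an immediate consequence of the two symmetrized Pieri theorems, and your specialization $y_i=z_j=a$ (resp.\ $x_i=z_j=a$), which makes every permutation in $\sum_{sym}$ act trivially so that both sides collapse to $(2\ell-1)!$ (resp.\ $(2k-1)!$) times the unsymmetrized identity, is the right bookkeeping. The reindexing $k\mapsto k-1$ (resp.\ $\ell\mapsto\ell-1$) and the verification of the convergence hypotheses are handled correctly.
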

\section{Further discussion}\label{further}
{In \cite{npy}, the authors pointed out that for $\{s\}^{\lambda}:=(s_{ij})\in T(\lambda,\mathbb C)$ with $s_{ij} = s$ and $\Re(s)>1$,
\begin{equation}
    \label{identify}
    \zeta_{\lambda}(\{s\}^{\lambda})=s_\lambda(1^{-s},2^{-s},\ldots),
\end{equation}
where $s_{\lambda}$ is the Schur function with $\lambda$. Combining this with the original Pieri formulas for Schur function, we obtain the following theorem:
\begin{theorem}
Let $\lambda$ be a partition. Then, for $s\in\mathbb C$ with $\Re (s)>1$, the followings hold
\[
\ytableausetup{boxsize=1.5em}
\zeta_{\lambda}(\{s\}^\lambda)\cdot\zeta_{(m)}\left(\begin{ytableau}
  s&\cdots&s
\end{ytableau}\right)=\sum_{{\pmb u}_{\mu} \in {\pmb U}_H}\zeta_{\mu}({\pmb u}_{\mu})
\]
and
\[
\ytableausetup{boxsize=1.5em}
\zeta_{\lambda}(\{s\}^\lambda)\cdot\zeta_{(\{1\}^m)}\left(\begin{ytableau}
  s\\\vdots\\s
\end{ytableau}\right)=\sum_{{\pmb u}_{\mu} \in {\pmb U}_E}\zeta_{\mu}({\pmb u}_{\mu}).
\]
\end{theorem}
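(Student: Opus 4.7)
The plan is to deduce the theorem from the classical Pieri formulas for Schur polynomials, using the identification (\ref{identify}) to translate back and forth between Schur multiple zeta values at a constant argument $s$ and Schur polynomial evaluations. Writing $X=(1^{-s},2^{-s},3^{-s},\ldots)$, we have $\zeta_{\lambda}(\{s\}^{\lambda})=s_{\lambda}(X)$ and $\zeta_{(m)}(s,\ldots,s)=s_{(m)}(X)=h_m(X)$, so the classical Pieri rule yields
\[
\zeta_{\lambda}(\{s\}^{\lambda})\cdot \zeta_{(m)}(s,\ldots,s)
=s_{\lambda}(X)\cdot h_m(X)=\sum_{\mu}s_{\mu}(X)=\sum_{\mu}\zeta_{\mu}(\{s\}^{\mu}),
\]
where the sum ranges over partitions $\mu\supseteq\lambda$ with $|\mu/\lambda|=m$ such that $\mu/\lambda$ is a horizontal strip (no two added boxes in the same column).

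It remains to check that the right-hand side of the theorem agrees with $\sum_{\mu}\zeta_{\mu}(\{s\}^{\mu})$. Since every variable (in $\pmb s$ and in $\pmb t$) equals $s$, any Young tableau $\pmb u_{\mu}$ produced by the pushing rule has constant entries $s$ on its shape $\mu$, and hence $\zeta_{\mu}(\pmb u_{\mu})=\zeta_{\mu}(\{s\}^{\mu})$. Thus
\[
\sum_{\pmb u_{\mu}\in \pmb U_H}\zeta_{\mu}(\pmb u_{\mu})=\sum_{\mu}N_H(\mu)\,\zeta_{\mu}(\{s\}^{\mu}),
\]
where $N_H(\mu)$ is the number of pushing patterns that produce a tableau of shape $\mu$. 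The task is therefore to prove that $N_H(\mu)=1$ precisely when $\mu/\lambda$ is a horizontal strip of size $m$, and $N_H(\mu)=0$ otherwise.

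This bijective identification is the main combinatorial step of the proof. A pushing pattern for $\pmb U_H$ consists of strictly increasing column positions $c_1<c_2<\cdots<c_m$, where each $t_i$ is placed at the top of column $c_i$ and then the column is pushed; the net effect on the underlying shape is the addition of a single box to column $c_i$. Since the $c_i$ are distinct, the skew shape $\mu/\lambda$ has at most one box in each column, i.e., is a horizontal strip. Conversely, any horizontal strip $\mu/\lambda$ of size $m$ determines its columns $c_1<\cdots<c_m$ uniquely, and hence a unique candidate pushing pattern. The subtle point — and the expected main obstacle — is verifying that all intermediate shapes remain valid Young diagrams so that no pattern is discarded; this reduces to checking that $\lambda'_{c_j-1}\geq \lambda'_{c_j}+1$ whenever $c_j-1\notin\{c_1,\ldots,c_m\}$, which in turn follows from the fact that the final $\mu$ is itself a partition. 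Combined, these observations give $N_H(\mu)=1$ for each horizontal strip of size $m$ and $N_H(\mu)=0$ otherwise, completing the proof of the first identity.

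The second identity is handled identically after transposing: we apply (\ref{identify}) with $\zeta_{(\{1\}^m)}(s,\ldots,s)=e_m(X)$, invoke the elementary-symmetric version of the classical Pieri rule $s_{\lambda}\cdot e_m=\sum_{\mu}s_{\mu}$ summed over vertical strips $\mu/\lambda$ of size $m$, and match this sum with $\sum_{\pmb u_{\mu}\in \pmb U_E}\zeta_{\mu}(\pmb u_{\mu})$ via the analogous bijection between pushing patterns in $\pmb U_E$ and vertical strips, where the roles of rows and columns are interchanged.
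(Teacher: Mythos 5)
Your proof is correct and takes essentially the same route as the paper: specialize via the identification $\zeta_{\lambda}(\{s\}^{\lambda})=s_{\lambda}(1^{-s},2^{-s},\ldots)$ of (\ref{identify}) and invoke the classical Pieri rules. You additionally spell out the combinatorial verification that the surviving pushing patterns biject with horizontal (resp.\ vertical) strips, a step the paper leaves implicit.
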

More generally, the Littlewood-Richardson rule for $\zeta_{\lambda}(\{s\}^\lambda)$ also holds.
We can confirm this from the original Littlewood-Richardson rule
and identity (\ref{identify}).

On the other hand,}
we need to consider the general case of Pieri formula for the Schur multiple zeta functions. 
However, as we have essentially difficulty, {we obtain only the following case:}
\begin{theorem}[$(\lambda=(m,2,\{1\}^{X-2}))\cdot(\pmb{h}_{\ell}=(\ell))$]
For any positive integers $m$ and $\ell$, and $X\ge2$, 
{we assume $\Re(s_{1i}),\Re(s_{21}),\Re(s_{22}), \Re(z_j),\Re(s_{X1})>1\ (1\le i\le m,1\le j\le m-1, j=\ell)$ and the real parts of other variables are greater than or equal to $1$. Then it holds that}
\[
    \ytableausetup{boxsize=1.5em}
\sum_{sym}\begin{ytableau}
  s_{11}&s_{12}&\cdots&\cdots&\cdots&s_{1m}\\
s_{21}&s_{22}\\
\vdots\\
s_{X1}
\end{ytableau}\cdot \begin{ytableau}
  z_{1}&\cdots&z_{{\ell}}
\end{ytableau}=\sum_{sym}\sum_{\pmb u_\mu\in\pmb U_H}\zeta_{\mu}(\pmb u_\mu),
\]
where $\displaystyle{\sum_{sym}}$ means the summation over the permutation of $S=\{s_{11},\ldots,s_{1{m}},s_{21},s_{22},z_1,\ldots,z_{{m}-1}\}$ as indeterminates.
\end{theorem}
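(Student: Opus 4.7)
The plan is to mirror the strategy used for the hook-type Pieri formulas, exploiting the fact that Theorem \ref{mnx} provides a clean Jacobi-Trudi expansion (no error term) for the base shape $(m,2,\{1\}^{X-2})$ since it is the $n=2$ case of $(m,n,\{1\}^{X-2})$. Concretely, I would first apply Theorem \ref{mnx} to write
\[
\sum_{diag}\zeta_{(m,2,\{1\}^{X-2})}(\pmb s)=\sum_{diag}\jts(m,2,\{1\}^{X-2};\underline{s}_{11}^{1m},\underline{s}_{21}^{22},s_{31},\ldots,s_{X1}),
\]
which is an $X\times X$ determinant of multiple zeta star functions. Multiplying by $\zeta^\star(z_1,\ldots,z_\ell)$ and applying $\sum_{sym}$ then turns the left-hand side into a signed sum of products of $\zeta^\star$-values in the symmetrized alphabet $S$.

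Second, I would enumerate the shapes $\mu$ of $\pmb u_\mu\in\pmb U_H$ produced by inserting a horizontal strip of length $\ell$ into $(m,2,\{1\}^{X-2})$. Because the column heights of the base are $(X,2,1,\ldots,1)$, a valid push is determined by (i) whether column $1$ is pushed, adding a row of length $1$ at the bottom; (ii) whether column $2$ is pushed, creating a third row of length $2$; (iii) a contiguous block of pushes on columns $3,\ldots,c^\star$ for some $c^\star\in\{2,\ldots,m\}$, extending the second row; and (iv) a rightward extension of the first row. So every $\mu$ has the form $(m+r,c^\star,\{1\}^{X'})$ or $(m+r,c^\star,2,\{1\}^{X'})$, the second occurring exactly when column $2$ is pushed.

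Third, I would apply the appropriate extended Jacobi-Trudi expansion to each $\zeta_\mu(\pmb u_\mu)$ on the right-hand side: Theorem \ref{mnx} (no error term) for the $(m+r,c^\star,\{1\}^{X'})$ shapes, and Theorem \ref{mnx2} (with recursive error $\jtse$ that collapses since the $\{2\}$-strip has length one) for the $(m+r,c^\star,2,\{1\}^{X'})$ shapes. After substitution, both sides become alternating sums of products of $\zeta^\star$-values indexed by permutations, and the task reduces to a term-by-term bookkeeping. The cancellation mechanism is exactly the one underlying Lemma \ref{diag}: swapping variables on a fixed diagonal turns a crossing pair of paths into a non-crossing one with opposite sign, so under $\sum_{sym}$ every crossing contribution (including all $\jtse$ pieces coming from type (iii) shapes) is paired with its diagonal transpose and annihilated. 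What remains matches the main determinant on the left.

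The main obstacle will be the contribution from the shapes carrying the third row of length $2$, which forces the appearance of $\jtse$ terms that are not present in the pure hook setting. Verifying that these error pieces cancel under $\sum_{sym}$ requires tracking pairs of transpositions in $S_H^{\lambda}$ across many Young shapes simultaneously, and this is precisely why the theorem is restricted to the second row of length $2$: once $n\ge 3$, the nested errors from Theorem \ref{mnx2} (applied at higher $X$) no longer close up under a single $\sum_{sym}$, which is the ``essential difficulty'' flagged in the section's introduction.
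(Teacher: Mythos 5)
Your plan is essentially the paper's proof: expand both sides via the extended Jacobi--Trudi formulas (Theorem \ref{mnx} for the base shape and the hook-shaped $\pmb u_\mu$, Theorem \ref{mnx2} for the shapes acquiring a third row of length $2$) and verify that the $\jtse$ pieces and off-diagonal determinant terms cancel under the symmetrization. The paper additionally observes that one may reduce to the case $(m,X)=(\ell,3)$ before carrying out the (lengthy but explicit) determinant bookkeeping that your sketch defers to the ``cancellation mechanism of Lemma \ref{diag}''; that explicit verification is the substance of the proof and would still need to be written out.
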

\begin{proof}
It suffices to show the case of $(m,X)=({\ell},3)$. By Theorem \ref{mnx}, the left-hand side of the assertion is 
 \[
    \sum_{W=S}\begin{vmatrix}
    \zetas(\underline{w}_{\,1}^{\, \ell})&\zetas(w_{{\ell}+1},w_{{\ell}+2}, \underline{w}_{\, 2}^{\, {\ell}})&\zetas(s_{31},w_{{\ell}+1},w_{{\ell}+2}, \underline{w}_{\,2}^{\, {\ell}})\\
    \zetas(w_1)&\zetas(w_{{\ell}+1},w_{{\ell}+2})&\zetas(s_{31},w_{{\ell}+1},w_{{\ell}+2})\\
     0&1&\zetas(s_{31})
    \end{vmatrix}\zetas(\underline{w}_{\, {\ell}+3}^{\, 2{\ell}+1},z_{\ell}),
 \]
 where $W=\{w_1,\ldots,w_{2l+1}\}$.
On the other hand, using Theorem \ref{mnx} and Theorem \ref{mnx2}, we find that the right-hand side of the assertion is 
 \begin{align*}
    &\sum_{W=S}\sum_{j=0}^{{\ell}-2}\left[\begin{vmatrix}
     \zetas(\underline{w}_{\, 1}^{\, 2{\ell}-1-j},z_{\ell})&\zetas(\underline{w}_{\, 2{\ell}-j}^{\, 2{\ell}+1}, \underline{w}_{\, j+2}^{\, 2{\ell}-1-j},z_{\ell})& \zetas(s_{31},\underline{w}_{\, 2{\ell}-j}^{\, 2{\ell}+1}, \underline{w}_{\, j+2}^{\, 2{\ell}-1-j},z_{\ell})\\
     \zetas(\underline{w}_{\, 1}^{\, j+1})&\zetas(\underline{w}_{\, 2{\ell}-j}^{\, 2{\ell}+1})&\zetas(s_{31}, \underline{w}_{\, 2{\ell}-j}^{\, 2{\ell}+1})\\
       0&1&\zetas(s_{31})
      \end{vmatrix}\right.\\
    &+\left.\begin{vmatrix}
     \zetas(\underline{w}_{\, 1}^{\, 2{\ell}-2-j}, z_{\ell})&\zetas( \underbrace{\underline{w}_{\, 2{\ell}-1-j}^{\, 2{\ell}}, \underline{w}_{\, j+2}^{\, 2{\ell}-2-j},z_{\ell}}_{\pmb{w}_1})&\zetas(w_{2{\ell}+1}, \pmb{w}_1) &\zetas(s_{31},w_{2{\ell}+1}, \pmb{w}_1)\\
     \zetas(\underline{w}_{\, 1}^{\, j+1})&\zetas(\underline{w}_{\, 2{\ell}-1-j}^{\, 2{\ell}})&\zetas(w_{2{\ell}+1}, \underline{w}_{\, 2{\ell}-1-j}^{\, 2{\ell}})&\zetas(s_{31},w_{2{\ell}+1}, \underline{w}_{\, 2{\ell}-1-j}^{\, 2{\ell}})\\
       0&1&\zetas(w_{2{\ell}+1})&\zetas(s_{31},w_{2{\ell}+1})\\
      0&0&1&\zetas(s_{31})
      \end{vmatrix}\right.\\
      &+
      \left.\begin{vmatrix}
     \zetas(\underline{w}_{\, 1}^{2{\ell}-2-j}, z_{\ell})&\zetas(\underline{w}_{\, 2{\ell}-1-j}^{2{\ell}}, \underline{w}_{\, j+2}^{2{\ell}-2-j}, z_{\ell})&\zetas(s_{31},w_{2{\ell}+1}, \underline{w}_{\, 2{\ell}-j}^{2{\ell}}, \underline{w}_{\, j+2}^{2{\ell}-2-j}, z_{\ell})\\
     \zetas(\underline{w}_{\,1}^{\, j+1})& \zetas(\underline{w}_{\, 2{\ell}-1-j}^{\, 2{\ell}})
     &\zetas(s_{31},w_{2{\ell}+1}, \underline{w}_{\, 2{\ell}-j}^{\, 2{\ell}})
     \\
      1&\zetas(w_{2{\ell}-1-j})&\zetas(s_{31},w_{2{\ell}+1})
      \end{vmatrix}\right.\\
      &-\left.\begin{vmatrix}
     \zetas(\underline{w}_{\, 2{\ell}-1-j}^{\, 2{\ell}}, \underline{w}_{\, j+2}^{\, 2{\ell}-2-j}, z_{\ell})&\zetas(s_{31},w_{2{\ell}+1}, \underline{w}_{\, 2{\ell}-j}^{2{\ell}}, \underline{w}_{\, j+2}^{2{\ell}-2-j}, z_{\ell})\\
     \zetas(\underline{w}_{\, 2{\ell}-1-j}^{\, 2{\ell}})&\zetas(s_{31},w_{2{\ell}+1}, \underline{w}_{\, 2{\ell}-j}^{\, 2{\ell}})
      \end{vmatrix}\right.\\
       &+\left.\begin{vmatrix}
     \zetas(w_{2{\ell}-1-j}, \underline{w}_{\, 1}^{\, 2{\ell}-2-j},z_{\ell})&\zetas(s_{31},w_{2{\ell}+1}, \underline{w}_{\, 2{\ell}-j}^{2{\ell}}, \underline{w}_{\, j+2}^{2{\ell}-2-j}, z_{\ell})\\
     \zetas(w_{2{\ell}-1-j}, \underline{w}_{\, 1}^{\, j+1})&\zetas(s_{31},w_{2{\ell}+1}, \underline{w}_{\, 2{\ell}-j}^{\, 2{\ell}})
      \end{vmatrix}\right.\\
    &+\begin{vmatrix}
     \zetas(\underline{w}_{\, 1}^{\, 2{\ell}-3-j},z_{\ell})&\zetas(\underbrace{\underline{w}_{\, 2{\ell}-2-j}^{\, 2{\ell}-1}, \underline{w}_{\, j+2}^{\,2{\ell}-3-j},z_{\ell}}_{\pmb{w}_2})&\zetas(w_{2{\ell}},w_{2{\ell}+1}, \pmb{w}_2)&\zetas(s_{31},w_{2{\ell}},w_{2{\ell}+1}, \pmb{w}_2)\\
     \zetas(\underline{w}_{\, 1}^{\, j+1})&\zetas(w_{2{\ell}-2-j}, \underline{w}_{\, 2{\ell}-1-j}^{\, 2{\ell}-1})&\zetas(\underbrace{w_{2{\ell}},w_{2{\ell}+1}, \underline{w}_{\, 2{\ell}-1-j}^{\, 2{\ell}-1}}_{\pmb{w}_{3}})&\zetas(s_{31},\pmb{w}_{3})\\
       1&\zetas(w_{2{\ell}-2-j})&\zetas(w_{2{\ell}},w_{2{\ell}+1})&\zetas(s_{31},w_{2{\ell}},w_{2{\ell}+1})\\
      0&0&1&\zetas(s_{31})
      \end{vmatrix}\\
      &-\begin{vmatrix}
    \zetas(w_{2{\ell}-2-j}, \underline{w}_{\, 2{\ell}-1-j}^{\, 2{\ell}-1}, \underline{w}_{\, j+2}^{\, 2{\ell}-3-j},z_{\ell})&\zetas(w_{2{\ell}},w_{2{\ell}+1}, \underline{w}_{\, 2{\ell}-1-j}^{\, 2{\ell}-1}, \underline{w}_{\, j+2}^{\, 2{\ell}-3-j},z_{\ell})&\zetas(s_{31},\pmb{w}_2)\\
     \zetas(\underline{w}_{\, 2{\ell}-2-j}^{\, 2{\ell}-1})&\zetas(w_{2{\ell}},w_{2{\ell}+1}, \underline{w}_{\, 2{\ell}-1-j}^{\, 2{\ell}-1})&\zetas(s_{31},\pmb{w}_{3})\\
      0&1&\zetas(s_{31})
      \end{vmatrix}\\
      &+\left.\begin{vmatrix}
    \zetas(w_{2{\ell}-2-j}, \underline{w}_{\,1}^{\, 2{\ell}-3-j},z_{\ell})&\zetas(w_{2{\ell}},w_{2{\ell}+1}, \underline{w}_{\, 2{\ell}-1-j}^{\, 2{\ell}-1}, \underline{w}_{\, j+2}^{\, 2{\ell}-3-j},z_{\ell})&\zetas(s_{31},\pmb{w}_2)\\
     \zetas(w_{2{\ell}-2-j}, \underline{w}_{\, 1}^{\, j+1})&\zetas(w_{2{\ell}},w_{2{\ell}+1}, \underline{w}_{\, 2{\ell}-1-j}^{\, 2{\ell}-1})&\zetas(s_{31},\pmb{w}_{3})\\
      0&1&\zetas(s_{31})
      \end{vmatrix}\right]\\
      &=\sum_{W=S}\sum_{j=0}^{{\ell}-2}{\bigg[}\zetas(\underline{w}_{\, 1}^{\, 2{\ell}-2-j},z_{\ell})\zetas(\underline{w}_{\, 2{\ell}-1-j}^{\, 2{\ell}})\zetas(w_{2{\ell}+1})\zetas(s_{31})-\zetas(\underline{w}_{\, 1}^{\, 2{\ell}-3-j},z_{\ell})\zetas(\underline{w}_{\, 2{\ell}-2-j}^{\, 2{\ell}})\zetas(w_{2{\ell}+1})\zetas(s_{31})\\
      &+\zetas(\underline{w}_{\, 1}^{\, 2{\ell}-3-j},z_{\ell})\zetas(\underline{w}_{\, 2{\ell}-2-j}^{\, 2{\ell}-1})\zetas(w_{2{\ell}},w_{2{\ell}+1})\zetas(s_{31})-\zetas(\underline{w}_{\, 1}^{\, 2{\ell}-2-j},z_{\ell})\zetas(\underline{w}_{\, 2{\ell}-1-j}^{\, 2{\ell}-1})\zetas(w_{2{\ell}},w_{2{\ell}+1})\zetas(s_{31})\\
      &+\zetas(\underline{w}_{\, 1}^{\, 2{\ell}-3-j},z_{\ell})\zetas(w_{2{\ell}+1})\zetas(s_{31}, \underline{w}_{\, 2{\ell}-2-j}^{\, 2{\ell}})-\zetas(\underline{w}_{\, 1}^{\, 2{\ell}-2-j},z_{\ell})\zetas(w_{2{\ell}+1})\zetas(s_{31}, \underline{w}_{\, 2{\ell}-1-j}^{\, 2l})\\
      &+\zetas(\underline{w}_{\, 1}^{\, 2{\ell}-2-j},z_{\ell})\zetas(\underline{w}_{\, 2{\ell}-1-j}^{\, 2{\ell}-1})\zetas(s_{31},w_{2{\ell}},w_{2{\ell}+1})-\zetas(\underline{w}_{\, 1}^{\, 2{\ell}-3-j},z_{\ell})\zetas(\underline{w}_{\, 2{\ell}-2-j}^{\, 2{\ell}-1})\zetas(s_{31},w_{2{\ell}},w_{2{\ell}+1})]\\
    &=\sum_{W=S}\begin{vmatrix}
    \zetas(\underline{w}_{\, 1}^{\, \ell})&\zetas(w_{{\ell}+1},w_{{\ell}+2}, \underline{w}_{\, 2}^{\, {\ell}})&\zetas(s_{31},w_{{\ell}+1},w_{{\ell}+2}, \underline{w}_{\, 2}^{\, {\ell}})\\
    \zetas(w_1)&\zetas(w_{{\ell}+1},w_{{\ell}+2})&\zetas(s_{31},w_{{\ell}+1},w_{{\ell}+2})\\
     0&1&\zetas(s_{31})
    \end{vmatrix}\zetas(\underline{w}_{\, {\ell}+3}^{\, 2{\ell}+1},z_{\ell}).
 \end{align*} 
Thus, we obtain the desired assertion.
\end{proof}

\begin{exam}[$(3,2,1)\cdot (1)$] {For $(i,j)\in D((3,2,1))$, $\Re (s_{ij})>1$ and $\Re (z_1)>1$, then}
\[
    \sum_{sym}\begin{ytableau}
  s_{11}&s_{12}&s_{13}\\
  s_{21}&s_{22}\\
s_{31}
\end{ytableau}\cdot \begin{ytableau}
 *(gray) z_{1}
\end{ytableau}=\sum_{sym}\begin{ytableau}
  *(gray)z_1&s_{12}&s_{13}\\
  s_{11}&s_{22}\\
s_{21}\\
s_{31}
\end{ytableau}+\begin{ytableau}
  s_{11}&*(gray)z_1&s_{13}\\
  s_{21}&s_{12}\\
s_{31}&s_{22}
\end{ytableau}+\begin{ytableau}
  s_{11}&s_{12}&*(gray)z_1\\
  s_{21}&s_{22}&s_{13}\\
s_{31}
\end{ytableau}+\begin{ytableau}
  s_{11}&s_{12}&s_{13}&*(gray)z_1\\
  s_{21}&s_{22}\\
s_{31}
\end{ytableau},
\]
where the sum $\displaystyle \sum_{sym}$ means the summation over the permutation of $S=\{s_{11},s_{12},s_{13},s_{21},s_{22},z_1\}$ as indeterminates.
\end{exam}

\section*{Acknowledgement}
The first author was supported by Grant-in-Aid for Scientific Research (C) (Grant Number: JP18K03223).
The second author was supported by Grant-in-Aid for JSPS Research Fellow (Grant Number: JP19J10705).


\begin{thebibliography}{99}
\bibitem[BC]{bc} H. Bachmann, S. Charlton.
Generalized Jacobi–Trudi determinants and evaluations of Schur multiple zeta values. European Journal of Combinatorics. {\bf 87} (2020), 103133.
\bibitem[BY]{by}  H. Bachmann and Y. Yamasaki. Checkerboard style Schur multiple zeta values and odd single zeta values. Math. Z. {\bf 290} (2018), 1173--1197.
\bibitem[F]{fu} W. Fulton. {\it Young tableaux: with applications to representation theory and geometry}. No. 35 Cambridge University Press, (1997).
\bibitem[GV]{gv} I.M. Gessel, G.X. Viennot. Binomial determinants, paths, and hook length formulae
Advances in Mathematics, {\bf 58} (3) (1985), 300--321.
\bibitem[G]{g}  G. Z. Giambelli. Alcune proprieta dele funzioni simmetriche caratteristiche, Atti Torino {\bf 38} (1903), 823–844.
 \bibitem[M]{ma} I. G. Macdonald. {\it Symmetric functions and Hall polynomials}. Oxford university press, (1998).
 \bibitem[NPY]{npy} M. Nakasuji, O. Phuksuwan and Y. Yamasaki. On Schur multiple zeta functions: A combinatoric generalization of multiple zeta functions, Advances in Mathematics, {\bf 333} (2018), 570--619. 
 \bibitem[N]{n} 
 M. Noumi, Painlev\'e Equations through Symmetry,
 {\it Translations of Mathematical Monographs}, 223 (2004).
\end{thebibliography}
\end{document}